\newcommand{\ls}{\leqslant}
\newcommand{\gr}{\geqslant}
\newcommand{\E}{\mathbb{E}}
\renewcommand{\P}{\mathbb{P}}
\newcommand{\R}{\mathbb{R}}
\newcommand{\e}{\varepsilon}
\newcommand{\red}{\color{red}}
\renewcommand{\mathfrak}{\mathcal}
\DeclareMathOperator{\Var}{Var}
\DeclareMathOperator{\vol}{vol}
\DeclareMathOperator{\conv}{conv}
\newtheorem{theorem}{Theorem}[section]
\newtheorem{lemma}[theorem]{Lemma}
\newtheorem{corollary}[theorem]{Corollary}
\newtheorem{proposition}[theorem]{Proposition}
\theoremstyle{remark}
\newtheorem{remark}[theorem]{Remark}
\newtheorem{conjecture}{Conjecture}
\theoremstyle{definition}
\newtheorem{defn}[theorem]{Definition}
\title{Sharp estimates for the Cram\'{e}r transform of log-concave measures and geometric applications}
\author{Silouanos Brazitikos}
\author{Giorgos Chasapis}
\address{Department of Mathematics \& Applied Mathematics, University of Crete, Voutes Campus, 70013 Heraklion, Greece}
\email{silouanb@uoc.gr}
\address{Department of Mathematics, University of Ioannina, University Campus, 45110 Ioannina, Greece}
\email{gchasapis@uoi.gr}
\thanks{}
\date{June 13, 2025.}
\begin{document}

\begin{abstract} 
We establish a new comparison between the Legendre transform of the cumulant generating function and the half-space depth of an arbitrary log-concave probability distribution on the real line, that carries on to the multidimensional setting. Combined with sharp estimates for the Cram\'{e}r transform of rotationally invariant measures, we are led to some new phase-transition type results for the asymptotics of the expected measure of random polytopes. As a byproduct of our analysis, we address a question on the sharp exponential separability constant for log-concave distributions, in the symmetric case.
\end{abstract}

\maketitle

\bigskip

\begin{footnotesize}
\noindent {\em 2020 Mathematics Subject Classification.} Primary 60D05; Secondary 60E15, 52A22, 52A23.

\noindent {\em Key words. log-concave probability measures, Cram\'{e}r transform,half-space depth, random polytopes, convex bodies} 
\end{footnotesize}

\bigskip

\section{Introduction}
In the present work we are concerned with a study of the Cram\'{e}r transform of log-concave probability distributions on $\mathbb{R}^n$, particularly in connection with the so-called notion of half-space depth and aiming towards an improved understanding of asymptotic geometric properties of random structures in high-dimensional spaces.

Given a probability measure $\mu $ on ${\mathbb R}^n$, Tukey's half-space depth is defined
for any $x\in\mathbb {R}^n$ by
\[
q_{\mu }(x)=\inf\{\mu (H):H\in {\mathcal H}(x)\},
\]
where ${\mathcal H}(x)$ is the set of all closed half-spaces $H$ of ${\mathbb R}^n$ containing $x$. Such a functional can be considered as a measure of centrality or outlyingness for multivariate data points, quantifying the ``depth'' of a point $x$ with respect to a set of data by considering the fraction of half-spaces containing $x$. The first work in statistics where some form of the half-space depth appears
is an article of Hodges \cite{Hodges-1955} from 1955. Tukey introduced the half-space depth for data sets in \cite{Tukey-1975} as a
tool that enables efficient visualization of random samples in the plane. The term ``depth" also comes
from Tukey's article. A formal definition of the half-space depth was given by
Donoho and Gasko in \cite{Donoho-Gasko-1982} (see also \cite{Small-1987}). Quite expectedly, depth functionals emerge also in a more geometric context as measures of symmetry or moduli for the approximation of convex bodies by polytopes. We refer the reader to the survey article of Nagy, Sch\"{u}tt and Werner
\cite{Nagy-Schutt-Werner-2019} for an extensive treatise on data depth functionals, their connections to convex geometry and more, as well as many references.

By the Cram\'{e}r transform of a probability distribution $\mu$ on $\mathbb{R}^n$ on the other hand, we refer to the functional defined by
\[
\Lambda^\ast_\mu(x) = \sup_{\xi\in\mathbb{R}^n}\left(\langle x,\xi\rangle-\log\E_\mu e^{\langle X,\xi\rangle}\right),
\]
that is, the Legendre transform of the cumulant generating function (or Log-Laplace transform) of $\mu$. The terminology stems from the pivotal role that this quantity plays in Cram\'{e}r's theorem within the theory of Large Deviations \cite{Dembo-Zeitouni}. In the context of Asymptotic Geometric Analysis, the cumulant generating function arises naturally as a central object in the study of high-dimensional convex bodies and log-concave measures (see for example \cite{Klartag-perturb, Eldan-Klartag, Klartag-Milman}). Interestingly, in a framework closer to mathematical optimisation, Bubeck and Eldan \cite{Bubeck-Eldan} (see also \cite{Chewi}) proved that the Cram\'{e}r transform of the uniform probability distribution on a convex body in $\mathbb{R}^n$ is an $n$-self-concordant barrier, giving the first universal barrier for convex bodies with optimal self-concordance parameter (for more on barrier functions, interior point methods and the theory of convex optimisation, we point the interested reader to \cite{Nesterov-Nemirovski}).

These two quantities are intimately connected; since
\[
q_\mu(x) = \inf_{\xi\in S^{n-1}}\P_\mu(\langle X,\xi\rangle\gr \langle x,\xi\rangle),
\]
we have, using Markov's inequality, that
\[
q_\mu(x)\ls \exp(-(\langle x,\xi\rangle-\log\E_\mu e^{\langle X,\xi\rangle}))
\]
for every $\xi\in S^{n-1}$. Taking infimum over $\xi$, it follows that the inequality $q_\mu(x)\ls e^{-\Lambda_\mu^\ast(x)}$, or equivalenty,
\begin{equation}\label{eq:q-L*-chernoff}
\Lambda_\mu^\ast(x)\ls \log\frac{1}{q_\mu(x)}
\end{equation}
holds for every $x$ and any probability measure $\mu$ on $\mathbb{R}^n$. Exploring the precision of this upper estimate has been crucial in many instances for problems regarding the geometry of high-dimensional random sets. It was probably first noted by Dyer, F\"{u}redi and McDiarmid in \cite{DFM} that, for sufficiently large $n$, \eqref{eq:q-L*-chernoff} can be essentially reversed when one takes the measure $\mu$ to be the uniform probability distribution on the vertices (or interior) of the $n$-dimensional cube $[-1,1]^n$, an observation that came to be crucial also in the work of B\'{a}r\'{a}ny and P\'{o}r \cite{Barany-Por} on the expected facet number of 0/1 polytopes. Subsequently, Gatzouras and Giannopoulos \cite{Gatz-Gian} (see also \cite{Gatz-Gian-LDP}, as well as \cite{GGM1}, \cite{GGM2}) carefully analysed the argument of \cite{DFM} to obtain a similar result for more general even product probability measures $\mu_n=\mu^{\otimes n}$ on $\mathbb{R}^n$ with compact support that satisfy a certain regularity assumption, namely that
\begin{equation}\label{L_[limit]}
\lim _{x \to x^\ast} \frac{-\ln \mu([x, \infty))}{\Lambda_\mu^\ast(x)}=1,
\end{equation}
where $x^\ast$ is the right endpoint of the support of $\mu$ (some instances of the same behaviour for product measures with no compact support were recently studied in \cite{Pafis}).

Quite recently, a reasonable reversal of \eqref{eq:q-L*-chernoff} was established in \cite{BGP} in a fairly general setting; the inequality
\begin{equation}\label{eq:L*-lower_BGP}
\Lambda^\ast_\mu(x)\gr \log\frac{1}{q_\mu(x)}-5\sqrt{n}
\end{equation}
is true for every $x\in\mathbb{R}^n$ if $\mu$ is the uniform probability distribution on any centered convex body of volume 1 in $\mathbb{R}^n$ (as is explained in \cite{BGP}, the $O(\sqrt{n})$-error term is fairly good for the sake of the applications considered by the authors). One of our main results is the following lower estimate on $\Lambda_\mu^\ast$ for any log-concave probability measure $\mu$ on $\mathbb{R}^n$.
\begin{theorem}\label{thm:L*-q-lower-general}
Let $\mu$ be a log-concave probability on $\mathbb{R}^n$. Then for any $\varepsilon\in(0,1)$,
\begin{equation}\label{eq:L*-q-lower-general}
\Lambda_\mu^\ast(x)\gr (1-\varepsilon)\log\frac{1}{q_\mu(x)}+\log\frac{\varepsilon}{2^{1-\varepsilon}}
\end{equation}
holds for any $x\in\mathbb{R}^n$.
\end{theorem}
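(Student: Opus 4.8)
The plan is to reduce the inequality to the case $n=1$ and there to extract the estimate from the log-concavity of the one-sided tail function of the relevant marginal.

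\emph{Step 1 (reduction to $n=1$).} Writing a generic $\xi\in\R^n$ as $r\theta$ with $r\gr 0$ and $\theta\in S^{n-1}$, and letting $\mu_\theta$ be the push-forward of $\mu$ under $y\mapsto\scal{y}{\theta}$ (again a log-concave probability measure on $\R$), I would first record the two identities
\[
\Lambda_\mu^\ast(x)=\sup_{\theta\in S^{n-1}}\Lambda_{\mu_\theta}^\ast\!\big(\scal{x}{\theta}\big),
\qquad
q_\mu(x)=\inf_{\theta\in S^{n-1}}q_{\mu_\theta}\!\big(\scal{x}{\theta}\big).
\]
The first holds because the negative-$r$ part of the supremum for a direction $\theta$ is exactly the positive-$r$ part for $-\theta$; the second because, for a fixed outer normal $\theta$, the smallest-measure half-space containing $x$ is the one with $x$ on its boundary, whose $\mu$-measure equals $\mu_\theta\big((-\infty,\scal{x}{\theta}]\big)$. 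Since $t\mapsto\log(1/t)$ is decreasing, it then suffices to prove \eqref{eq:L*-q-lower-general} when $n=1$: applying it to each $\mu_\theta$ and taking suprema over $\theta$ recovers the general case. (When $q_\mu(x)=0$ both sides are $+\infty$; this, and the case of point masses, I would dispatch separately.)

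\emph{Step 2 (normalisation).} Given a log-concave probability $\nu$ on $\R$ and a point $s$, I would translate by $-s$ and, if necessary, reflect through the origin; both operations preserve $q_\nu(s)$ and turn $\Lambda_\nu^\ast(s)$ into $\Lambda^\ast(0)$ for the transformed measure. So I may assume $s=0$ and $p:=q_\nu(0)=\nu\big([0,\infty)\big)\ls\tfrac12$. Because $\Lambda_\nu^\ast(0)=-\log\inf_{t\in\R}\E_\nu e^{tX}$, the goal reduces to the bound
\[
\inf_{t\in\R}\E_\nu e^{tX}\ls\frac{p^{\,1-\e}}{\e},
\]
which, since $2^{1-\e}\gr1$, is slightly stronger than \eqref{eq:L*-q-lower-general}.

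\emph{Step 3 (the one-dimensional estimate) and conclusion.} Assume $0<p<1$ (the cases $p\in\{0,1\}$, point masses included, being elementary). The survival function $\overline F(u):=\nu\big((u,\infty)\big)$ is log-concave, so $\psi:=-\log\overline F$ is convex and nondecreasing; put $L:=\psi(0)=\log(1/p)>0$ and $\alpha:=\psi'(0^-)$, which is finite and strictly positive (otherwise $\psi\equiv0$ on $(-\infty,0]$, forcing $L=0$). A supporting line of $\psi$ at $0$ gives $\psi(u)\gr L+\alpha u$, hence
\[
\overline F(u)\ls\min\{1,\,p\,e^{-\alpha u}\}\qquad(u\in\R).
\]
Inserting this into the layer-cake identity $\E_\nu e^{tX}=t\int_\R e^{tu}\,\overline F(u)\,du$ (valid for $t>0$) and evaluating the resulting elementary integral, split at $u_\ast=-L/\alpha$, yields for every $0<t<\alpha$
\[
\E_\nu e^{tX}\ls\frac{\alpha\,e^{-tL/\alpha}}{\alpha-t}.
\]
Choosing $t=\alpha(1-\e)\in(0,\alpha)$ makes the right-hand side equal to $e^{-(1-\e)L}/\e=p^{\,1-\e}/\e$, i.e.\ precisely the quantity sought in Step 2; taking $-\log$ then finishes the proof.

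\emph{Main obstacle.} The delicate point is Step 3. A crude estimate such as $\overline F\ls1$ on $(-\infty,0)$ is far too lossy — it yields only $\Lambda_\nu^\ast(0)\gr-\log2$ — so one must genuinely use that the \emph{tail} $\overline F$, not merely the density, is log-concave, since this is exactly what controls the mass sitting to the left of the point in terms of $p$. The particular choice $t=\alpha(1-\e)$ is then what produces the exponent $1-\e$ on $p$ together with the factor $1/\e$; everything else is bookkeeping and the handling of degenerate configurations.
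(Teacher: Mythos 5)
Your proof is correct, and the one--dimensional step is a genuinely different argument from the paper's. The reduction from $\mathbb{R}^n$ to $\mathbb{R}$ via the one--dimensional marginals $\mu_\theta$ and the two identities $\Lambda_\mu^\ast(x)=\sup_\theta\Lambda_{\mu_\theta}^\ast(\langle x,\theta\rangle)$, $q_\mu(x)=\inf_\theta q_{\mu_\theta}(\langle x,\theta\rangle)$ is exactly what the paper does in its Corollary~\ref{cor:q-lower}. The divergence is in the $n=1$ case. The paper works with the log-concavity of the \emph{full} half-space depth $q_\mu=\min\{F,Z\}$: it forms the convex function $g=-(1-\varepsilon)\log q_\mu$, inserts a tangent affine function $l$ at the point $x$, and uses the elementary identity $g(x)-\log\E e^{g(X)}\ls xt-\Lambda(t)\ls\Lambda_\mu^\ast(x)$; the resulting term $\E e^{g(X)}=\E q(X)^{\varepsilon-1}$ is then controlled by a separate moment estimate (Proposition~\ref{prop:q-moments}), itself derived from a stochastic-dominance lemma comparing $F(X)$, $Z(X)$ with the uniform distribution. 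You instead use only the log-concavity of the one-sided survival function $\overline F$: a supporting line of $\psi=-\log\overline F$ at the point of interest produces an explicit exponential majorant $\overline F(u)\ls\min\{1,pe^{-\alpha u}\}$, and inserting this into the layer-cake identity $\E e^{tX}=t\int e^{tu}\overline F(u)\,du$ and optimising over $t=\alpha(1-\varepsilon)$ directly bounds $e^{-\Lambda^\ast(0)}=\inf_t\E e^{tX}$. Your argument is more elementary (no moment lemma, no stochastic dominance) and, as you note, gives the slightly sharper constant $\log\varepsilon$ in place of $\log(\varepsilon/2^{1-\varepsilon})$, i.e.\ an improvement of $(1-\varepsilon)\log 2$; your bound is in fact exactly tight for the one-sided exponential at $\varepsilon=1/x$. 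The trade-off is that the paper's Proposition~\ref{prop:q-moments} and Lemma~\ref{lem:stoc-dom} are reused elsewhere (in the proof of Theorem~\ref{thm:L*-cond-char}, in Lemma~\ref{lem:var-exp-bounds}, and in Proposition~\ref{prop:exp-seper-R}), so that machinery is not wasted within the paper, whereas your route is self-contained but special to this one estimate.
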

Optimising over $\varepsilon>0$, we can see that the right hand side of \eqref{eq:L*-q-lower-general} is maximised when $\varepsilon=(-\log(2q_\mu(x)))^{-1}$. This leads to the lower estimate
\[
\Lambda^\ast_\mu(x)\gr \log\frac{1}{2q_\mu(x)\log\frac{1}{2q_\mu(x)}}-1,
\]
for every $x$ with $q_\mu(x)<1/(2e)$.

A key ingredient for the proof of Theorem \ref{thm:L*-q-lower-general} is a sharp inequality for the negative moments of $q_\mu(X)$, for any real random variable $X$ with distribution $\mu$ (see Proposition \ref{prop:q-moments}). Then, fundamental convexity considerations lead us to the 1-dimensional version of \eqref{eq:L*-q-lower-general}, which can be extended to higher dimensions thanks to the log-concavity of marginal distributions of $\mu$.

A similar argument to that used for the proof of Theorem \ref{thm:L*-q-lower-general} lets us provide a characterisation of the measures $\mu$ on the real line that satisfy \eqref{L_[limit]}, under no symmetry or compactness assumption.
\begin{theorem}\label{thm:L*-cond-char}
Let $\mu$ be probability measure on $\mathbb{R}$ and let $x^\ast\in(-\infty,+\infty]$ denote the right endpoint of $\mathrm{supp}(\mu)$. Then, 
\[
\lim_{x\to x^\ast} \frac{-\log\mu([x,\infty))}{\Lambda_\mu^\ast(x)}=1
\]
holds if and only if there exists a convex function $V(x)$ such that $\lim_{x\to x^\ast}\frac{-\log\mu([x,\infty))}{V(x)}=1$.
\end{theorem}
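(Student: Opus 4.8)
The implication ``$\Longrightarrow$'' is immediate: the Cram\'{e}r transform $\Lambda^\ast_\mu$ is itself convex, being a supremum of affine functions, and for every $x<x^\ast$ it is finite, since by the Chernoff bound \eqref{eq:q-L*-chernoff} and the fact that in dimension one $q_\mu(x)=\mu([x,\infty))$ whenever $x$ lies to the right of a median of $\mu$, we have $\Lambda^\ast_\mu(x)\le -\log\mu([x,\infty))<\infty$ there. Hence $V=\Lambda^\ast_\mu$ is an admissible choice.

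For the converse, set $I(x)=-\log\mu([x,\infty))$. We may assume $I(x)\to\infty$ as $x\to x^\ast$, the only remaining case (an atom at the right endpoint $x^\ast$) being degenerate and handled directly. By the previous paragraph $\Lambda^\ast_\mu(x)\le I(x)$ for $x<x^\ast$, so it suffices to prove
\[
\liminf_{x\to x^\ast}\frac{\Lambda^\ast_\mu(x)}{I(x)}\ge 1 .
\]
Fix $\varepsilon\in(0,1)$ and choose $x_0<x^\ast$ with $(1-\varepsilon)V(t)\le I(t)\le(1+\varepsilon)V(t)$ for all $t\in[x_0,x^\ast)$; then $\mu([t,\infty))\le e^{-W(t)}$ on $[x_0,x^\ast)$, where $W:=(1-\varepsilon)V$ is convex and (because $I\to\infty$) increases to $\infty$.

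The heart of the argument is an upper estimate for the moment generating function $M_\mu(\xi)=\mathbb{E}_\mu e^{\xi X}$ at a carefully chosen $\xi=\xi_x$. Using the layer-cake identity $M_\mu(\xi)=\xi\int_{\mathbb R}e^{\xi t}\mu([t,\infty))\,dt$ for $\xi>0$, splitting the integral at $x_0$ and invoking the tail bound above on $[x_0,\infty)$,
\[
M_\mu(\xi)\le e^{\xi x_0}+\xi\int_{x_0}^{\infty}e^{\xi t-W(t)}\,dt .
\]
Now convexity of $W$ is exploited in the same spirit as in the proof of Theorem~\ref{thm:L*-q-lower-general} (which combines the sharp negative-moment inequality for $q_\mu(X)$ of Proposition~\ref{prop:q-moments} with convexity considerations): since $t\mapsto\xi t-W(t)$ is concave, the integrand is dominated by $e^{W^\ast(\xi)}$ with $W^\ast(\xi)=\sup_{t\ge x_0}(\xi t-W(t))$, and the whole integral equals $e^{W^\ast(\xi)}$ times the ``width'' of the set of near-maximizers. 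For $x$ near $x^\ast$ one takes $\xi_x$ to be a subgradient of $W$ at $x$, so that $W^\ast(\xi_x)=\xi_x x-W(x)$, up to a small perturbation of $\xi_x$ designed to keep this width under control when $W$ has a corner or an affine stretch at $x$ (balancing the two one-sided contributions, one is led, in the extreme cases, to a choice such as the geometric mean of the two one-sided slopes of $W$ at $x$). Together with the factor $\xi$ in front of the integral, the width then contributes only a term that is sub-exponential in $I(x)$ — of order $\log\xi_x$, or even $O(1)$, after optimising the perturbation — whence $\log M_\mu(\xi_x)\le \xi_x x-W(x)+o(I(x))$, and therefore
\[
\Lambda^\ast_\mu(x)\ \ge\ \xi_x x-\log M_\mu(\xi_x)\ \ge\ W(x)-o(I(x))\ =\ (1-\varepsilon)V(x)-o(I(x)).
\]
Since $V(x)\ge I(x)/(1+\varepsilon)$ near $x^\ast$ and $I(x)\to\infty$, dividing by $I(x)$ yields $\liminf_{x\to x^\ast}\Lambda^\ast_\mu(x)/I(x)\ge(1-\varepsilon)/(1+\varepsilon)$; letting $\varepsilon\to0$ completes the proof.

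The step I expect to be the real obstacle is precisely this Laplace-type estimate: with no lower bound available on the curvature of $V$, certifying that the correction term is genuinely $o(I(x))$ — i.e. controlling the width of the near-maximizers of $\xi t-W(t)$ — requires a delicate, and possibly non-uniform, choice of the tilting parameter $\xi_x$, together with a separate treatment of the regimes $x^\ast<\infty$ and $x^\ast=\infty$ and of the affine pieces of $V$. This is where the convexity of $V$ is used in an essential way, and it is the analogue of the ``fundamental convexity considerations'' invoked in the proof of Theorem~\ref{thm:L*-q-lower-general}.
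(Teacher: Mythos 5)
Your "$\Longrightarrow$" direction (taking $V=\Lambda^\ast_\mu$) matches the paper. For "$\Longleftarrow$" you share the overall reduction — by Chernoff $\Lambda^\ast_\mu\ls I:=-\log\mu([\cdot,\infty))$, so it suffices to show $\liminf_{x\to x^\ast}\Lambda^\ast_\mu(x)/I(x)\gr 1$, working through a shrunk convex $W$ comparable to $I$ near $x^\ast$ — but your central estimate proceeds by a genuinely different route, and it has a gap that you flag but do not close.

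You bound the moment generating function $M_\mu(\xi_x)$ by a layer-cake/Laplace argument and need the width of near-maximizers of $t\mapsto\xi_x t-W(t)$ to contribute only $o(I(x))$ to $\log M_\mu(\xi_x)$. Without any quantitative strict convexity of $V$ this is genuinely delicate: on affine stretches of $W$ the width is the whole stretch; when $x^\ast=\infty$ the subgradient choice of $\xi_x$ may sit at the boundary of finiteness of $M_\mu$; and when $V$ blows up steeply at a finite $x^\ast$ the prefactor $\xi_x$ in front of the integral is itself exponentially large in $I(x)$, so the claimed cancellation with the width must be argued rather than asserted. The perturbation of $\xi_x$ and case analysis that you gesture at are not carried out, so this is a real gap, not a routine loose end.

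The paper avoids the Laplace step entirely by running the tangent-line argument already used in the proof of Theorem \ref{thm:L*-condition} — this is what the "fundamental convexity considerations" actually are, not a Laplace estimate. With $W=(1-\delta)^2 V$ convex and $l(y)=ty+b$ its support line at $x$ (so $l\ls W$ and $l(x)=W(x)$), one gets $\E e^{l(X)}\ls\E e^{W(X)}$, which rearranges to
\[
W(x)-\log\E e^{W(X)}\ \ls\ xt-\Lambda_\mu(t)\ \ls\ \Lambda^\ast_\mu(x).
\]
The quantity $\E e^{W(X)}$ is then bounded by an \emph{absolute} constant: near $x^\ast$, $W(y)\ls(1-\delta)I(y)\ls(1-\delta)\log(1/q_\mu(y))$, so $\E e^{W(X)}$ is controlled by $\E\, q_\mu(X)^{-(1-\delta)}\ls 2^{1-\delta}/\delta$ via Proposition \ref{prop:q-moments}(b), uniformly in $x$ and $\mu$. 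In other words, rather than tilting $\mu$ exponentially and fighting Laplace asymptotics, the paper tilts by $e^{W}$ directly and uses the sharp negative-moment bound for $q_\mu(X)$ to tame the normalisation. Replacing your Laplace step by this argument removes the obstacle you identified and substantially shortens the proof.
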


Next, we focus on radially symmetric probability distributions on $\mathbb{R}^n$. We determine the radially symmetric distributions that maximise/minimise the Cram\'{e}r transform functional pointwise in the log-concave setting under an isotropicity assumption, and establish sharp bounds for the order of growth of $\E\Lambda^\ast_\mu$ and $\E e^{-\Lambda^\ast_\mu}$ for certain classes of rotationally invariant log-concave probability measures. Our methods combine an analysis of the cumulant generating function of the marginals of the spherical distribution, majorisation-type arguments and log-concavity considerations that let in for a fruitful comparison of the underlying densities. One particular outcome of our results is the following.
\begin{theorem}\label{thm:exp-one-shot-symmetr}
Let $X=(X_1,\ldots,X_n)$ be a random vector in $\mathbb{R}^n$ with independent and symmetric log-concave components. Then $\E e^{-\Lambda_X^\ast}\ls c_{\mathrm{exp}}^n$ for $c_{\mathrm{exp}}=\E e^{-\Lambda_Y^\ast}$, where $Y$ is a standard double exponential random variable, i.e. with density $\frac{1}{2}e^{-\frac{|x|}{2}}$.
\end{theorem}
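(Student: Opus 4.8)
The plan is to reduce the estimate to a sharp one–dimensional inequality by independence, and then to deduce that inequality from a symmetric refinement of Theorem~\ref{thm:L*-q-lower-general}. Since $X_1,\dots,X_n$ are independent, the cumulant generating function is additive, $\log\E e^{\langle X,\xi\rangle}=\sum_{i=1}^{n}\log\E e^{\xi_iX_i}$, so the Cram\'er transform splits coordinatewise: $\Lambda_X^\ast(x)=\sum_{i=1}^{n}\Lambda_{X_i}^\ast(x_i)$. Hence $e^{-\Lambda_X^\ast(X)}=\prod_{i=1}^{n}e^{-\Lambda_{X_i}^\ast(X_i)}$ and, taking expectations and using independence once more, $\E e^{-\Lambda_X^\ast}=\prod_{i=1}^{n}\E e^{-\Lambda_{X_i}^\ast(X_i)}$. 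It therefore suffices to prove that $\E e^{-\Lambda_\nu^\ast(Z)}\ls c_{\mathrm{exp}}$ for every symmetric log-concave probability measure $\nu$ on $\R$, with $Z\sim\nu$; here $\nu$ may be normalised at will, since $\Lambda_{tZ}^\ast(tx)=\Lambda_Z^\ast(x)$ makes the left-hand side scale-invariant.

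The core of the argument is a symmetric sharpening of Theorem~\ref{thm:L*-q-lower-general}: for symmetric log-concave $\nu$ on $\R$ and every $\eta\in(0,1)$,
\[
\Lambda_\nu^\ast(x)\gr \eta\log\frac{1}{q_\nu(x)}+\log\frac{1-\eta^{2}}{2^{\eta}},\qquad x\in\R,
\]
equivalently $e^{-\Lambda_\nu^\ast(x)}\ls \tfrac{2^{\eta}}{1-\eta^{2}}\,q_\nu(x)^{\eta}$; this is the bound of Theorem~\ref{thm:L*-q-lower-general} with $\e=1-\eta$, but with the additive constant $\log(\e/2^{1-\e})$ improved to $\log(\e(2-\e)/2^{1-\e})$. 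It is sharp for the double exponential: since $\log\tfrac{1}{q_Y(x)}=|x|+\log 2$, its right-hand side equals $\eta|x|+\log(1-\eta^{2})=\eta|x|-\Lambda_Y(\eta)$, whose supremum over $\eta$ is exactly $\Lambda_Y^\ast(x)$. Writing $q_Y^{-1}(p)=\log\tfrac{1}{2p}$ for the inverse of $q_Y$ on $(0,\tfrac12]$ and optimising over $\eta$ in the displayed bound, we get, for general symmetric log-concave $\nu$, $e^{-\Lambda_\nu^\ast(x)}\ls e^{-\Lambda_Y^\ast(q_Y^{-1}(q_\nu(x)))}$ for all $x\in\R$.

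To finish, recall the classical fact that when $\nu$ has continuous distribution function $F_\nu$ the random variable $q_\nu(Z)=\min\{F_\nu(Z),1-F_\nu(Z)\}$ is uniformly distributed on $(0,\tfrac12)$. Combining this with the previous inequality,
\[
\E e^{-\Lambda_\nu^\ast(Z)}\ls \E\, e^{-\Lambda_Y^\ast(q_Y^{-1}(q_\nu(Z)))}=2\int_{0}^{1/2}e^{-\Lambda_Y^\ast(q_Y^{-1}(u))}\,du=\E e^{-\Lambda_Y^\ast(Y)}=c_{\mathrm{exp}},
\]
where the penultimate equality is the substitution $u=q_Y(y)$, $y\gr 0$ (equivalently: the whole chain, applied to $\nu$ equal to the law of $Y$, consists of equalities, because $q_Y^{-1}(q_Y(y))=|y|$). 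Multiplying over $i=1,\dots,n$ gives $\E e^{-\Lambda_X^\ast}=\prod_i\E e^{-\Lambda_{X_i}^\ast(X_i)}\ls c_{\mathrm{exp}}^{\,n}$.

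The main obstacle is the symmetric pointwise bound. I would prove it by rerunning the argument behind Theorem~\ref{thm:L*-q-lower-general} — most notably the sharp negative-moment inequality for $q_\nu$ of Proposition~\ref{prop:q-moments} — while exploiting the two extra features of the symmetric case: the moment generating function of $Z$ is the even function $\xi\mapsto\E\cosh(\xi Z)$, and $q_\nu(Z)$ is \emph{exactly} uniform on $(0,\tfrac12)$ rather than merely stochastically below it. These should be precisely what is needed to upgrade the constant to its sharp symmetric value; the delicate point is the accompanying rigidity — that $Y$ is the unique extremiser — whereas the remainder of the proof is the elementary bookkeeping above.
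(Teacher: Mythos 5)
Your reduction to the one–dimensional statement via additivity of the cumulant generating function is correct and is exactly what the paper does (this is Corollary~\ref{cor:one-shot-prod}). However, the whole argument then rests on the claimed ``symmetric sharpening''
\[
\Lambda_\nu^\ast(x)\;\gr\;\eta\log\tfrac{1}{q_\nu(x)}+\log\tfrac{1-\eta^{2}}{2^{\eta}},
\]
and this is where there is a genuine gap: you do not prove it, and the route you indicate for proving it does not work. The proof of Theorem~\ref{thm:L*-q-lower-general} yields $\Lambda_\nu^\ast(x)\gr g(x)-\log\E e^{g(Z)}$ with $g(y)=\eta\log(1/q_\nu(y))$, and the constant comes entirely from bounding $\E e^{g(Z)}=\E q_\nu(Z)^{-\eta}$. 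But for any \emph{continuous} $\nu$ --- a fortiori for symmetric log-concave $\nu$ --- one has $\E q_\nu(Z)^{-\eta}=\tfrac{2^{\eta}}{1-\eta}$ \emph{with equality} (Remark~\ref{rem:q-uniform}), so Proposition~\ref{prop:q-moments} cannot be sharpened in the symmetric case, and ``$q_\nu(Z)$ exactly uniform'' is already fully exploited by that proof. Likewise, replacing $\E e^{tZ}$ by $\E\cosh(tZ)$ buys nothing, since these are equal for symmetric $Z$. Symmetry does make the tangent-line bound $l\ls g$ non-tight on the opposite half-line, but a direct attempt to exploit this (e.g.\ bounding $e^{tz}$ and $e^{-tz}$ separately) gives an improvement strictly weaker than your claimed one even for the double exponential. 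In short, the sharpened constant $\log\tfrac{1-\eta^2}{2^{\eta}}$ is an unproven assertion, not a consequence of ``rerunning'' the existing argument, and your proof is therefore incomplete at its central step.

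It is also worth flagging that the remainder of your argument is sound modulo this lemma: optimising over $\eta$ gives $e^{-\Lambda_\nu^\ast(x)}\ls e^{-\Lambda_Y^\ast(q_Y^{-1}(q_\nu(x)))}$, and since $q_\nu(Z)$ is uniform on $(0,\tfrac12)$ the resulting expectation is exactly $\E e^{-\Lambda_Y^\ast(Y)}$; and the sharpness-for-$Y$ computation you give is correct. For comparison, the paper proves the required one-dimensional inequality by an entirely different mechanism: Proposition~\ref{prop:Exp-min} gives the pointwise comparison $\Lambda_{X_1}^\ast\gr\Lambda_{\mathcal E_1}^\ast$ at the \emph{same} argument after normalising to isotropic position (via Lemma~\ref{lem:Borell} on log-concavity of $\Psi_f$), and Theorem~\ref{thm:e^(-L*)-max} then transfers this to the expectation by an intersecting-densities argument comparing $f_X$ with $f_{\mathcal E}$. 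That route is logically independent of the $\Lambda^\ast$-versus-$q$ comparison and avoids the need for any sharpening of Theorem~\ref{thm:L*-q-lower-general}. If you wish to pursue your alternative route, you would need an independent proof of the pointwise inequality $\Lambda_\nu^\ast(x)\gr\Lambda_Y^\ast\bigl(\log\tfrac{1}{2q_\nu(x)}\bigr)$ for symmetric log-concave $\nu$ --- this seems plausible but is a nontrivial claim in its own right, and the negative-moment machinery cannot deliver it.
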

Theorem \ref{thm:exp-one-shot-symmetr} provides a partial positive answer, in the symmetric case, to a question raised in \cite{GGT-2023} on the exponential one-shot separability constant of log-concave distributions. In connection with stochastic separability and applications to machine learning  and
error-correction mechanisms in artificial intelligence systems, the authors in \cite{GGT-2023} considered the quantity $\E q_\mu$ as a ``measure of separability'' of the distribution $\mu$, and have pursued the understanding of cases of distributions for which $\E q_\mu$ decreases exponentially fast with the dimension (the definition of ``one-shot separability'' is stated in \cite{GGT-2023} only for product distributions, but one could as well consider the general case). It is suggested in the same work that the constant $c_{\mathrm{exp}}$ above provides the sharp upper bound in general, for log-concave product distributions. We actually conjecture that the result of Theorem \ref{thm:exp-one-shot-symmetr} provides the correct upper bound even in the high-dimensional case (for not-necessarily product measures), at least in the rotationally-invariant setting. Relative to the case of product distributions, the general upper bound $1-2\cdot 10^{-5}$ for $\E e^{-\Lambda^\ast_\mu}$, where $\mu$ is any log-concave probability measure on $\mathbb{R}$, has been announced in \cite[Proposition 3]{GGT-2023}. However, the referenced manuscript has not yet come to our attention. Theorem \ref{thm:L*-q-lower-general} lets us also provide an improved, although still suboptimal, general estimate.
\begin{proposition}\label{prop:exp-seper-R}
For every log-concave random variable $X$ we have $\E e^{-\Lambda_X^\ast(X)}\ls 1-(4e)^{-1}$.
\end{proposition}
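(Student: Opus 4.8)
The plan is to combine the one–dimensional instance of Theorem~\ref{thm:L*-q-lower-general} with a pointwise optimisation of the free parameter inside the expectation. Let $\mu$ denote the law of $X$ and abbreviate $q(x):=q_\mu(x)\in(0,\tfrac12]$. Rearranging \eqref{eq:L*-q-lower-general} with $n=1$ gives, for every $\varepsilon\in(0,1)$ and every $x\in\mathbb{R}$,
\[
e^{-\Lambda_X^\ast(x)}\ls\frac{2^{1-\varepsilon}}{\varepsilon}\,q(x)^{1-\varepsilon}=\frac{\big(2q(x)\big)^{1-\varepsilon}}{\varepsilon},
\]
and since also $e^{-\Lambda_X^\ast(x)}\ls 1$ trivially (take $\xi=0$ in the definition of $\Lambda_X^\ast$), the first step is to minimise the right–hand side over $\varepsilon\in(0,1)$ for each fixed $x$. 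An elementary analysis of $\varepsilon\mapsto (2q)^{1-\varepsilon}/\varepsilon$ shows that when $q<\tfrac1{2e}$ the minimum on $(0,1)$ is attained at $\varepsilon_0=\big(\log\tfrac1{2q}\big)^{-1}$ and equals $2e\,q\log\tfrac1{2q}$, while for $q\in[\tfrac1{2e},\tfrac12]$ the function is decreasing on $(0,1)$ with infimum $1$, so there the trivial bound is the one to keep. This yields the pointwise estimate $e^{-\Lambda_X^\ast(x)}\ls\psi\big(q(x)\big)$, where $\psi(q)=2eq\log\tfrac1{2q}$ for $0<q\ls\tfrac1{2e}$ and $\psi(q)=1$ for $\tfrac1{2e}\ls q\ls\tfrac12$ (the two formulas agreeing at $q=\tfrac1{2e}$).

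Next I would pass to a one–dimensional integral. Since $\mu$ is log–concave it has a continuous distribution function $F$, hence $q(x)=\min\{F(x),1-F(x)\}$ and $F(X)$ is uniformly distributed on $[0,1]$; therefore
\[
\E\,e^{-\Lambda_X^\ast(X)}\ls\E\,\psi\big(q(X)\big)=\int_0^1\psi\big(\min\{u,1-u\}\big)\,du=2\int_0^{1/2}\psi(u)\,du .
\]
It then remains to split the last integral at $u=\tfrac1{2e}$ and evaluate it: integration by parts gives $\int_0^{1/2e}2eu\log\tfrac1{2u}\,du=\tfrac3{8e}$, while $\int_{1/2e}^{1/2}du=\tfrac12-\tfrac1{2e}$, so $2\int_0^{1/2}\psi(u)\,du=\tfrac3{4e}+1-\tfrac1e=1-\tfrac1{4e}$, which is the asserted bound.

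Since Theorem~\ref{thm:L*-q-lower-general} already does the real work, I do not expect a genuine obstacle here; the argument is essentially bookkeeping. The only point that calls for a little care is the optimisation in $\varepsilon$: for $x$ near the median of $\mu$ --- where $q(x)$ is close to $\tfrac12$ --- the unconstrained optimiser $\varepsilon_0$ leaves the admissible range $(0,1)$, so on that part of the line one has to settle for the trivial estimate $e^{-\Lambda_X^\ast}\ls 1$, and it is precisely this dichotomy that produces the piecewise profile $\psi$ and, after integration, the constant $\tfrac1{4e}$. One may also note, as a convenient check, that in dimension one $\E\,q_\mu(X)^{s}=\frac1{(s+1)2^{s}}$ for every $s>0$ by the same change of variables, which makes the intermediate estimates completely explicit.
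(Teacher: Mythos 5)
Your proof is correct and follows essentially the same route as the paper: optimise the free parameter $\varepsilon$ in Theorem~\ref{thm:L*-q-lower-general} pointwise (arriving at the same threshold $q<\tfrac{1}{2e}$ and the same profile $2eq\log\tfrac{1}{2q}$), split at $q=\tfrac1{2e}$ using the trivial bound on the complementary set, and integrate against the law of $\min\{U,1-U\}$. The only superficial difference is that you carry out the $\varepsilon$-optimisation in place, while the paper imports it from the proof of Lemma~\ref{lem:var-exp-bounds}.
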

The proof of Proposition \ref{prop:exp-seper-R} is provided in Section \ref{sec:thresholds-prod}.

The starting point and main motivation for the results in this work have been questions on the geometry of high-dimensional random sets, in particular phase transition-type phenomena for the asymptotics of the expected measure of randomly generated convex hulls in $\mathbb{R}^n$. Following a long line of research \cite{DFM, Gatz-Gian, Pivovarov, BCGTT, Pafis} that we briefly recall in the next section, as well as the recent treatment of the problem in the general setting of log-concave probability measures \cite{BGP}, we are led to a natural conjecture that we are now able to support in some new cases, including that of general product probability measures on $\mathbb{R}^n$ and several rotationally invariant distributions in high-dimensions.

The rest of the paper is organised as follows: In Section \ref{sec:preliminaries} we have gathered some preliminaries and further develop the background related to our considered applications. The proofs of Theorems \ref{thm:L*-q-lower-general} and \ref{thm:L*-cond-char} are presented in Section \ref{sec:L*-low}. Results regarding the Cr\'{a}mer transform of rotationally invariant distributions are collected in Section \ref{sec:rot-inv}. Finally, in Section \ref{sec:thresholds} we record some implications of our results in various settings for the expected measure-threshold problem.

Throughout the text we will use standard big-O notation. In particular, $f(x)=\omega(g(x))$ stands for $\lim_{x\to \infty}f(x)/g(x)=+\infty$ and $f(x)\sim g(x)$ stands for $\lim_{x\to\infty}f(x)/g(x)=1$.

\section{Preliminaries and Background}\label{sec:preliminaries}

\subsection{Log-concave probability measures}

We denote by $\langle\cdot,\cdot\rangle$ the standard Euclidean inner product in $\mathbb{R}^n$ and by $|\cdot|$ the Euclidean norm in the respective dimension. The $n$-dimensional closed Euclidean unit ball is denoted by $B_2^n$ and $S^{n-1}$ is its boundary, the unit sphere. It is known that $S^{n-1}$ is equipped with a unique rotationally invariant probability measure, which we usually denote by $\sigma$ (surpassing explicit reference to the dimension $n$). The $n$-dimensional Lebesgue measure of a measurable set $A\subset \mathbb{R}^n$ will be denoted by $\vol_n(A)$.

We say that a subset $K$ of $\mathbb{R}^n$ is a convex body if it is compact and convex, with a nonempty interior. A convex body $K$ is (centrally) symmetric if $K=-K$ and centered if its barycenter ${\rm bar}(K)=\vol_n(K)^{-1}\int_K x\,dx$ is the origin. A Borel measure $\mu$ on $\mathbb{R}^n$ is called log-concave if $\mu((1-\lambda)A+\lambda B)\gr \mu(A)^{1-\lambda}\mu(B)^\lambda$ for any pair of measurable $A,B\subset \mathbb{R}^n$ and $\lambda\in(0,1)$. We say that a random vector $X$ in $\mathbb{R}^n$ is log-concave if it is distributed according to a log-concave probability measure $\mu$ on $\mathbb{R}^n$. It is known (see \cite{Borell-1974}) that any log-concave probability measure $\mu$ on $\mathbb{R}^n$ which is not concentrated on a hyperplane (that is, $\mu(H)<1$ for every hyperplane $H$) has a log-concave density $f_\mu$. We will mainly focus on this non-degenerate setting. A standard example of a log-concave probability measure on $\mathbb{R}^n$ is that of the uniform measure on a convex body $K$ in $\mathbb{R}^n$, which we might denote by $\mu_K$. Clearly then $f_{\mu_K}=\vol_n(K)^{-1}\mathds{1}_K$ (which is log-concave, by the convexity of $K$). Another common example is the $n$-dimensional Gaussian measure, which we denote by $\gamma_n$ and has density $f(x)=(2\pi)^{-\frac{n}{2}}e^{-\frac{|x|^2}{2}}$.

We say that a convex body $K$ in $\mathbb{R}^n$ is isotropic if it is centered, $\vol_n(K)=1$, and there is a constant $L_K>0$ such that
\[
\left(\int_K\langle x,\theta\rangle^2\,dx\right)^{\frac{1}{2}}=L_K
\]
for every $\theta\in S^{n-1}$. More generally, we say that a log-concave probability measure $\mu$ on $\mathbb{R}^n$ is isotropic if
\[
\int_{\mathbb{R}^n}\langle x,\theta\rangle\,d\mu(x)=0 \qquad\hbox{and}\qquad \int_{\mathbb{R}^n}\langle x,\theta\rangle^2\,d\mu(x)=1,
\]
both hold for all $\theta\in S^{n-1}$. Note that, under this normalisation, a convex body $K$ in $\mathbb{R}^n$ is isotropic if and only if the probability measure with density $f(x)=L_K^n\mathds{1}_{L_K^{-1}K}$ is isotropic. It is known that every log-concave measure $\mu$ on $\mathbb{R}^n$ can be pushed forward to an isotropic measure by an affine transformation. The isotropic constant of a log-concave probability measure on $\mathbb{R}^n$ with density $f_\mu$ is defined by
\[
L_\mu:=\left(\mathrm{det}(\mathrm{Cov}(\mu))\|f_\mu\|_\infty^2\right)^{\frac{1}{2n}},
\]
where $\mathrm{Cov}(\mu)$ is the covariance matrix of $\mu$. Note that if $\mu$ is isotropic then $\mathrm{Cov}(\mu)$ is the identity matrix. The celebrated isotropic constant conjecture (equivalent to the hyperplane conjecture posed by Bourgain \cite{Bourgain-1991}) asking if there exists an absolute constant $C>0$ such that $L_n:=\sup L_\mu\ls C$ for every $n\in\mathbb{N}$ (where the supremum is taken over all isotropic log-concave probability measures on $\mathbb{R}^n$), had been a central open problem for more than 30 years in the field of Asymptotic Geometric Analysis. It was not only until very recently (while the present paper was undergoing the peer review process) that this question was settled to the affirmative by Klartag-Lehec \cite{Klartag-Lehec}, following a crucial observation by Guan \cite{Guan} after several other recent breakthrough advances. We refer the interested reader to \cite{BGVV} for details on the rich history of this problem, its connection with other major open questions in the field, as well as an extensive study of the theory of log-concave measures.

\subsection{Threshold for the expected measure of random sets}

For any dimension $n\in\mathbb{N}$, let $N>n$ and $X_1,\ldots,X_N$ be independent random vectors in $\mathbb{R}^n$ with a common distribution $\mu_n$. We denote by $K_N:=\conv\{X_1,\ldots,X_N\}$, the convex hull of $X_1,\ldots,X_N$. Our main object of study is the behaviour of the sequence $a_n=\E \mu_n(K_N)$ when $n$ tends to infinity. Here, expectation is taken with repsect to the product distribution $\mu_n^{\otimes N}$ (in a more general setting, one could even take the points $X_i$ distributed according to different measures $\mu_n^{i}$). Note that the quantity $\E\mu_n(K_N)$ is invariant under affine transformations of $\mu_n$, so we may in general assume that $\mu_n$ is centered.

We will see that usually the asymptotics of $a_n$ will depend on the magnitude of the number of random points $N$ chosen, with respect to the dimension $n$. To make the notion of a ``sharp threshold'' precise, we rely on the following definitions, essentially first considered in \cite{BGP}. Given $\delta\in(0,1)$ and a probability measure $\mu$ on $\mathbb{R}^n$, we let
\[
\varrho_1(\mu,\delta):= \sup\{r>0: \E\mu(K_N)\ls \delta, \hbox{ for every } N\ls \exp(r)\},
\]
and
\[
\varrho_2(\mu,\delta):= \inf\{r>0: \E\mu(K_N)\gr 1-\delta, \hbox{ for every } N\gr \exp(r)\}.
\]
Let also
\[
\varrho(\mu,\delta) := \varrho_2(\mu,\delta)-\varrho_1(\mu,\delta).
\]
With this notation at hand, we give the following definition.
\begin{defn}\label{def:sharp-threshold}
Let $(\mu_n)_{n\in\mathbb{N}}$ be a sequence of probability measures $\mu_n$ on $\mathbb{R}^n$, and $(\mathfrak{T}_n)_n$ a sequence of positive numbers. We say that $(\mu_n)_{n\in\mathbb{N}}$ exhibits a sharp threshold around $\mathfrak{T}_n$ if for every $\varepsilon\in(0,1)$ there is $n_0=n_0(\varepsilon)\in\mathbb{N}$ such that for every $n\gr n_0$,
\[
\varrho_1(\mu_n,\delta_n)\gr(1-\varepsilon)\mathfrak{T}_n \qquad\hbox{ and }\qquad \varrho_2(\mu_n,\delta_n)\ls(1+\varepsilon)\mathfrak{T}_n,
\]
for some sequence $(\delta_n)$ with $\lim_{n\to\infty}\delta_n=0$.
\end{defn}
Clearly, if $(\mu_n)_{n\in\mathbb{N}}$ exhibits a sharp threshold around $\mathfrak{T}_n$ under the above definition, then it is the case that for every fixed $\varepsilon\in(0,1)$, any choice $N\ls e^{(1-\varepsilon)\mathfrak{T}_n}$ will imply that $\E\mu_n(K_N)\to 0$ as $n\to\infty$ and respectively $\E\mu_n(K_N)\to 1$ as $n\to\infty$ if $N\gr e^{(1+\varepsilon)\mathfrak{T}_n}$. That is, the expected measure of $K_N$ exhibits an asymptotic phase transition from 0 to 1 around $N=e^{\mathfrak{T}_n}$, with a ``threshold window'' of the order of $\mathfrak{T}_n^{-1}\varrho(\mu_n,\delta_n)\ls 2\varepsilon$.

Using this terminology, the authors in \cite{BGP} put into a common perspective a number of related older results. They also studied extensively the quantities $\varrho_1$ and $\varrho_2$ under a log-concavity assumption. In the following subsection we elaborate further on the history of the problem and recall several achievements and natural conjectures.

\subsection{Half-space depth and the Cram\'{e}r transform}

Recall the notion of Tukey's half-space depth defined for any probability measure $\mu$ and point $x$ in $\mathbb{R}^n$ by
\[
q_{\mu }(x)=\inf\{\mu (H):H \hbox{ is a closed half-space containing } x\}.
\]
In the sequel, given a random vector $X$ distributed according to $\mu$ we might use either the notation $q_\mu$ or $q_X$ for the half-space depth of $\mu$ (or even relax this to $q$, if the underlying distribution is obvious from the context). By definition, in dimension 1 we have $q_\mu(x)=\min\{\mu((-\infty,x]),\mu([x,\infty))\}$ for every $x\in\mathbb{R}$. In general, in any dimension $n$, note that since any half-space $H$ that contains $x$ is a set of the form $\{y:\langle y-x,\xi\rangle \gr 0\}$ for some $\xi\in S^{n-1}$, $q_\mu(x)$ can be realised as
\[
q_\mu(x) = \inf_{\xi\in S^{n-1}}\P_\mu(\langle X,\xi\rangle\gr \langle x,\xi\rangle).
\]
It is then easy to see that $q_Y(ax+b)=q_X(x)$ for any $a,b\in\mathbb{R}$ whenever $Y$ is equidistributed to $aX+b$ and, as a consequence, if $\mu$ is the distribution of $X$ and $\nu$ is the distribution of $aX+b$ then
\[
\E_{\nu}\Phi(q_{\nu}(Y)) = \E_\mu\Phi(q_{\nu}(aX+b)) = \E_\mu\Phi(q_\mu(X))
\]
holds for any function $\Phi:(0,\infty)\to(0,\infty)$, that is, the quantity $\E_\mu\Phi(q_\mu)$ is invariant under affine transformations of $\mu$. Another important property that we will rely on is the fact that $q_\mu$ respects log-concavity.
\begin{lemma}\label{lem:q-logconc}
If $\mu$ is a probability measure on $\mathbb{R}^n$ whose marginals have log-concave tails, then the function $q_\mu$ is log-concave. In particular if $\mu$ has a log-concave density in $\mathbb{R}^n$, then $q_\mu$ is log-concave.
\end{lemma}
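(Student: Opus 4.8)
The plan is to use the representation
\[
q_\mu(x)=\inf_{\xi\in S^{n-1}}\P_\mu(\langle X,\xi\rangle\gr\langle x,\xi\rangle)
\]
recorded above, to check that under the stated hypothesis every function appearing in this infimum is log-concave on $\mathbb{R}^n$, and then to conclude from the elementary fact that a pointwise infimum of a family of log-concave functions is again log-concave.

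Concretely, I would fix $\xi\in S^{n-1}$ and set $h_\xi(t):=\P_\mu(\langle X,\xi\rangle\gr t)$ for $t\in\mathbb{R}$, i.e. the upper tail of the one-dimensional marginal of $\mu$ in the direction $\xi$. The assumption that the marginals of $\mu$ have log-concave tails is precisely the statement that $h_\xi$ is log-concave on $\mathbb{R}$ for every $\xi$; since $x\mapsto\langle x,\xi\rangle$ is affine, it follows that $g_\xi(x):=h_\xi(\langle x,\xi\rangle)=\P_\mu(\langle X,\xi\rangle\gr\langle x,\xi\rangle)$ is log-concave on $\mathbb{R}^n$. Passing to logarithms, $\log q_\mu=\inf_{\xi\in S^{n-1}}\log g_\xi$ is a pointwise infimum of concave functions $\mathbb{R}^n\to[-\infty,\infty)$; for fixed $x,y$ and $\lambda\in(0,1)$ the chain $\log g_\xi((1-\lambda)x+\lambda y)\gr(1-\lambda)\log g_\xi(x)+\lambda\log g_\xi(y)\gr(1-\lambda)\log q_\mu(x)+\lambda\log q_\mu(y)$ survives taking the infimum over $\xi$, so $q_\mu$ is log-concave. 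The possibility that $q_\mu$ vanishes somewhere, i.e. $\log q_\mu=-\infty$ there, causes no difficulty.

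For the ``in particular'' clause it remains to verify that a log-concave density on $\mathbb{R}^n$ has one-dimensional marginals with log-concave tails. Fixing $\xi$ and rotating so that $\xi=e_n$, I would write $h_{e_n}(t)=\int_t^\infty\varphi(s)\,ds$, where $\varphi(s)=\int_{\mathbb{R}^{n-1}}f_\mu(y,s)\,dy$ is the density of $\langle X,e_n\rangle$ and is log-concave on $\mathbb{R}$ by Pr\'ekopa's theorem. Then, for any log-concave $\varphi$ on $\mathbb{R}$, the function $(t,u)\mapsto\varphi(t+u)\mathds{1}_{\{u\gr 0\}}$ is log-concave on $\mathbb{R}^2$, being a product of the log-concave function $\varphi(t+u)$ with the indicator of a half-plane; one more application of Pr\'ekopa's theorem yields that $t\mapsto\int_0^\infty\varphi(t+u)\,du=\int_t^\infty\varphi$ is log-concave. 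Hence $h_\xi$ is log-concave for every $\xi$, and the first part applies.

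I do not anticipate a real obstacle here. The one point that genuinely needs care is that it is the infimum of \emph{concave} (not convex) functions that remains concave --- which is exactly why the argument must be run on $\log q_\mu$ rather than on $q_\mu$ directly --- together with making sure that each of the three successive reductions in the last step (log-concave density on $\mathbb{R}^n$ $\Rightarrow$ log-concave one-dimensional densities $\Rightarrow$ log-concave tails) is correctly invoked as an instance of Pr\'ekopa's marginal theorem.
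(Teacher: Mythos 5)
Your proof is correct and takes essentially the same approach as the paper's: fix a direction $\xi$, observe that the log-concavity of each directional tail $t\mapsto\P(\langle X,\xi\rangle\gr t)$ together with the trivial bound $\P(\langle X,\xi\rangle\gr\langle\cdot,\xi\rangle)\gr q_\mu(\cdot)$ gives the defining inequality for log-concavity of $q_\mu$ after taking the infimum over $\xi\in S^{n-1}$. The paper leaves the ``in particular'' clause implicit, whereas you spell out the two Pr\'{e}kopa steps (log-concave density $\Rightarrow$ log-concave marginal density $\Rightarrow$ log-concave tail), which is a welcome but not substantively different addition.
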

\begin{proof}
Assume that $X$ is distributed according to $\mu$. Fix some arbitrary $\xi\in S^{n-1}$. By our hypothesis, for any $x,y\in\mathbb{R}^n$ and $\lambda\in(0,1)$
\[
\P(\langle X,\xi\rangle\gr\langle (1-\lambda)x+\lambda y,\xi\rangle)\gr\left(\P(\langle X,\xi\rangle\gr\langle x,\xi\rangle)\right)^{1-\lambda}\left(\P(\langle X,\xi\rangle\gr\langle y,\xi\rangle)\right)^\lambda\gr q_\mu(x)^{1-\lambda}q_\mu(y)^\lambda.
\]
Taking infimum over $\xi\in S^{n-1}$ concludes the proof.
\end{proof}
In a seminal work, Dyer, F\"{u}redi and McDiarmid  \cite{DFM} were the first to show that a threshold-type behaviour like that described in Definition \ref{def:sharp-threshold} holds for the expected volume of random polytopes with independent vertices uniformly distributed in the unit cube $[-1,1]^n$; if $X_1,\ldots,X_N$ are as such and $K_N=\conv\{X_1,\ldots,X_N\}$, then there is an absolute constant $\kappa>0$ such that for every $\varepsilon\in(0,1)$,
\begin{equation}\label{eq:DFM-cube-threshold}
\lim_{n\to\infty} 2^{-n}\E\vol_n(K_N) = \begin{cases} 1, &\hbox{ if } N\gr (\kappa+\varepsilon)^n\\ 0, &\hbox{ if } N\ls (\kappa-\varepsilon)^n \end{cases}
\end{equation}
(a similar result holds also in the case that the random points are chosen uniformly from the vertices of the cube). 
The proof in \cite{DFM} relies on an ingenious blend of probabilistic and geometric arguments, manifested in the following central lemma.
\begin{lemma}\label{lem:DFM}
Let $K_N$ be the convex hull of $N$ random vectors independently chosen with respect to a common probability distribution $\mu$ on $\mathbb{R}^n$. Then for every Borel subset $A$ of $\mathbb{R}^n$,
\begin{equation}\label{eq:DFM-lemma-upper}
\E\mu(K_N)\ls \mu(A)+N\cdot \sup_{x\in A^c}q_\mu(x).
\end{equation}
If additionally $\mu$ assigns zero mass to every hyperplane in $\mathbb{R}^n$, then
\begin{equation}\label{eq:DFM-lemma-lower}
\E\mu(K_N)\gr \mu(A)\left(1-2\binom{N}{n}\left(1-\inf_{x\in A}q_\mu(x)\right)^{N-n}\right).
\end{equation}
\end{lemma}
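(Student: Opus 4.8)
The plan is to handle both estimates through Fubini's theorem, which gives $\E\mu(K_N)=\int_{\mathbb{R}^n}\P(x\in K_N)\,d\mu(x)$, followed by pointwise control of $\P(x\in K_N)$. For \eqref{eq:DFM-lemma-upper} I would split this integral over $A$ and $A^c$: on $A$ we bound $\P(x\in K_N)\ls 1$, contributing $\mu(A)$. On $A^c$ the key point is that if $x\in K_N=\conv\{X_1,\dots,X_N\}$, then $\langle x,\xi\rangle\ls\max_{i\ls N}\langle X_i,\xi\rangle$ for \emph{every} $\xi\in S^{n-1}$, being a convex combination of the $\langle X_i,\xi\rangle$. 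Hence for each fixed $\xi$ a union bound gives $\P(x\in K_N)\ls\sum_{i=1}^{N}\P(\langle X_i,\xi\rangle\gr\langle x,\xi\rangle)=N\,\P_\mu(\langle X,\xi\rangle\gr\langle x,\xi\rangle)$, and taking the infimum over $\xi\in S^{n-1}$ yields $\P(x\in K_N)\ls N q_\mu(x)$. Integrating this bound over $A^c$ and using $q_\mu(x)\ls\sup_{x\in A^c}q_\mu(x)$ together with $\mu(A^c)\ls 1$ completes the proof of the first inequality.

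For \eqref{eq:DFM-lemma-lower} it suffices to prove the pointwise estimate $\P(x\notin K_N)\ls 2\binom{N}{n}(1-q_\mu(x))^{N-n}$ for every $x$, and then to integrate over $A$ (discarding the nonnegative contribution of $A^c$) after replacing $q_\mu(x)$ by $\inf_{y\in A}q_\mu(y)$. To prove this estimate, I would first invoke the hypothesis that $\mu$ assigns zero mass to every hyperplane to conclude that almost surely the points $X_1,\dots,X_N$ are in general position: conditioning on any $n$ of them, the hyperplane they span is unique and $\mu$-null, so no further point can lie on it. On this full-measure event $K_N$ is a simplicial polytope, and since $\mathbb{R}^n\setminus K_N$ is the union of the open outer half-spaces of the facet hyperplanes of $K_N$, the event $\{x\notin K_N\}$ is contained, up to a null set, in $\bigcup_{|J|=n}(F_J^{+}\cup F_J^{-})$. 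Here $J$ ranges over the $n$-element subsets of $\{1,\dots,N\}$, $H_J$ is the hyperplane spanned by $\{X_j:j\in J\}$, $H_J^{+}$ and $H_J^{-}$ are the two open half-spaces that it bounds, and $F_J^{\pm}$ is the event that all $X_i$ with $i\notin J$ lie in $H_J^{\pm}$ while $x\notin H_J^{\pm}$.

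It remains to estimate $\P(F_J^{+})$, the term $\P(F_J^{-})$ being symmetric. Conditioning on $\{X_j:j\in J\}$ fixes $H_J$ and $H_J^{+}$; on the event $\{x\notin H_J^{+}\}$ the complementary closed half-space $\overline{H_J^{-}}$ contains $x$, so $\mu(\overline{H_J^{-}})\gr q_\mu(x)$ by the definition of the half-space depth, and therefore $\mu(H_J^{+})=1-\mu(\overline{H_J^{-}})\ls 1-q_\mu(x)$, using $\mu(H_J)=0$. Since the remaining $N-n$ points are independent copies of $X$, we get $\P(F_J^{+}\mid\{X_j\}_{j\in J})\ls\mu(H_J^{+})^{N-n}\ls(1-q_\mu(x))^{N-n}$; summing over the $\binom{N}{n}$ subsets $J$ and the two orientations gives $\P(x\notin K_N)\ls 2\binom{N}{n}(1-q_\mu(x))^{N-n}$, and the asserted inequality follows upon integrating over $A$ as described above.

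I expect the main obstacle to be precisely the geometric reduction used in the lower bound: one must argue carefully that, almost surely, the failure of $x\in K_N$ is certified by one of the finitely many facet hyperplanes of $K_N$, so that the bad event decomposes into $2\binom{N}{n}$ pieces, each governed by a fresh batch of $N-n$ independent points. This is where the no-hyperplane-mass hypothesis enters, through the general-position statement; once that is secured, the per-piece estimate is a routine conditioning combined with the very definition of $q_\mu$, and the final integration is immediate.
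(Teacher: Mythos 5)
Your argument is correct, and the paper itself gives no proof of this lemma -- it is quoted from Dyer, F\"uredi and McDiarmid \cite{DFM}, so there is no in-paper proof to compare against; your reasoning is essentially the classical argument from that source. For \eqref{eq:DFM-lemma-upper}, the Fubini/union-bound chain $\P(x\in K_N)\ls N\,\P_\mu(\langle X,\xi\rangle\gr\langle x,\xi\rangle)$ for every fixed $\xi$, followed by the infimum over $\xi\in S^{n-1}$, is exactly right. For \eqref{eq:DFM-lemma-lower}, the general-position reduction (no-hyperplane-mass $\Rightarrow$ a.s.\ every $n$-subset spans a unique hyperplane and $K_N$ is a full-dimensional simplicial polytope), the decomposition of $\{x\notin K_N\}$ into the $2\binom{N}{n}$ facet events $F_J^{\pm}$, and the conditional bound $\P(F_J^{\pm}\mid\{X_j\}_{j\in J})\ls\mu(H_J^{\pm})^{N-n}\ls(1-q_\mu(x))^{N-n}$ using the definition of half-space depth on the closed complementary half-space all go through; integrating the resulting pointwise bound over $A$ and replacing $q_\mu(x)$ by $\inf_{y\in A}q_\mu(y)$ gives the stated inequality.
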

It is noteworthy that the proof of Lemma \ref{lem:DFM} does not rely on particular properties of the underlying distribution $\mu$, turning this into a versatile tool that has been the starting point in most, if not all, subsequent works establishing similar threshold-type phenomena.

The exact value of the constant $\kappa$ in \eqref{eq:DFM-cube-threshold} is $\kappa=2\pi e^{-\gamma-\frac{1}{2}}$, where $\gamma$ is Euler's constant. One can see that this is exactly $\E\Lambda^\ast_U(U)$, where $U$ is uniform in $[-1,1]$. Let us recall some basic facts on the log-Laplace and Cram\'{e}r transforms of log-concave probability distributions. We denote by $\Lambda_\mu$ the cumulant generating function of a probability measure $\mu$ on $\mathbb{R}^n$, that is
\[
\Lambda_\mu(\xi)=\log\E e^{\langle X,\xi\rangle},
\]
for every $\xi \in\mathbb{R}^n$, where $X$ is distributed according to $\mu$ (again, in this case we will feel free to interchange between the notation $\Lambda_\mu$ and $\Lambda_X$). Note that $\Lambda_\mu$ is a convex function on $\mathbb{R}^n$ (this is a direct consequence of H\"{o}lder's inequality), which is equal to 0 at the origin. Moreover, $\Lambda_\mu\gr 0$ holds if $\mu$ is centered, by Jensen's inequality. The Cram\'{e}r transform of $\mu$ is defined as the Legendre--Fenchel transform of $\Lambda_\mu$, that is,
\begin{equation}\label{eq:L*-def}
\Lambda^\ast_\mu(x) = \sup_{\xi\in\mathbb{R}^n} \left(\langle x,\xi\rangle-\Lambda_\mu(\xi)\right).
\end{equation}
As a Legendre transform, $\Lambda_\mu^\ast$ is a non-negative convex and lower semicontinuous function on $\mathbb{R}^n$. In the case of one-dimensional symmetric distributions, which will be of interest to us, $\Lambda_\mu$ and $\Lambda_\mu^\ast$ are even functions, which are well defined when $\Lambda_\mu$ is finite on some neighborhood of 0. The set $\{\Lambda_\mu<\infty\}$ is in that case a symmetric open interval $(-t^\ast,t^\ast)$ for some $t^\ast\in(0,\infty]$, and $\Lambda_\mu$ is $C^\infty$ on $(-t^\ast,t^\ast)$. If moreover $\mu$ is log-concave and $\mathrm{supp}(\mu)=(-x^\ast,x^\ast)$, $x^\ast\in(0,\infty]$, then $\Lambda_\mu$ is differentiable, with $\Lambda'_\mu$ strictly increasing on $(-t^\ast,t^\ast)$ and onto $(-x^\ast,x^\ast)$, in particular
\[
\lim_{t\to \pm t^\ast}\Lambda_\mu'(t)=\pm x^\ast
\]
(see \cite[Lemma 2.3]{Pafis} for a proof of these claims). The Legendre transform  $\Lambda^\ast_\mu$ on the other hand is $C^\infty$ on $\mathrm{supp}(\mu)$ and satisfies $({\Lambda_\mu^\ast})'=(\Lambda_\mu')^{-1}$. Moreover, for every $x\in(-x^\ast,x^\ast)$,
\[
\Lambda^\ast_\mu(x) = x\cdot t_x-\Lambda_\mu(t_x),
\]
where $t_x=({\Lambda^\ast_\mu})'(x)$ (equivalently, $x=\Lambda_\mu'(t)$). We point the reader to the monograph \cite{Rockafellar} for more on the general theory of Legendre duality as well as \cite{Dembo-Zeitouni, Deuschel-Stroock} for connections of the Cram\'{e}r transform to the theory of Large Deviations.

Note finally that if $X$ is a random vector with distribution $\mu$ in $\mathbb{R}^n$, and $\nu$ is the distribution of $Y=aX+b$, for any real numbers $a,b$, it is straightforward to check that $\Lambda^\ast_\nu(x)=\Lambda^\ast_\mu\left(\frac{x-b}{a}\right)$ for every $x\in\mathbb{R}^n$. It follows then that for any function $\Phi:(0,\infty)\to(0,\infty)$,
\[
\E_\nu(\Phi(\Lambda_\nu^\ast(Y)))=\E_\mu(\Phi(\Lambda_\nu^\ast(aX+b))) = \E_X\Phi(\Lambda_\mu^\ast(X)),
\]
in other words $\E_\mu(\Phi(\Lambda^\ast_\mu))$ is affinely invariant.

From the very definitions, it is easy to verify that for any probability measure $\mu$, the pointwise inequality
\begin{equation}\label{eq:q-L^ast-upper}
q_\mu(x)\ls e^{-\Lambda^\ast_\mu(x)}
\end{equation}
holds (this direct consequence of the Markov/Chernoff bound was already explained in the Introduction). This fundamental observation led the authors of \cite{DFM} to the ``small N'' part of \eqref{eq:DFM-cube-threshold} as follows: If $x$ is such that $q_\mu(x)>e^{-\alpha}$ for some $\alpha>0$, then $\Lambda_\mu^\ast(x)<\alpha$. Independence however implies that if $\mu$ is the uniform probability on the cube, then $\Lambda_\mu^\ast(x)=\sum_{j=1}^n\Lambda^\ast_U(x_j)$, where $U$ is uniform in $[-1,1]$, so a choice of $\alpha=(1-\varepsilon/2)n\kappa$, for $\kappa=\E\Lambda^\ast_U$ and $A=\{x\in\mathbb{R}^n:q(x)>\alpha\}$ in Lemma \ref{lem:DFM} implies
\[
\mu(A)\ls \P\left(\frac{1}{n}\sum_{j=1}^n\Lambda^\ast_U(U_j)<(1-\varepsilon/2)\E\Lambda_U^\ast\right),
\]
where $U_1,\ldots,U_n$ are independent copies of $U$. The latter probability decreases to 0 with $n$, by the Central Limit Theorem, and this will eventually lead us to the upper bound in \eqref{eq:DFM-cube-threshold}, thanks to \eqref{eq:DFM-lemma-upper}. For the other half of the Theorem, a comparison reverse to that of \eqref{eq:q-L^ast-upper} would let us argue in a similar fashion when $N$ is ``large''. Second to Lemma \ref{lem:DFM}, the non-trivial part  in the proof of \eqref{eq:DFM-cube-threshold} was to show that for any $\varepsilon>0$ the inequality $q_\mu(x)\gr e^{-\Lambda_\mu^\ast(x)-\varepsilon}$ also holds when $n$ is sufficiently large.

The argument of \cite{DFM} was carefully reviewed by Gatzouras and Giannopoulos \cite{Gatz-Gian}, who verified that \eqref{eq:DFM-cube-threshold} still holds for any even and compactly supported product probability measure $\mu_n=\mu^{\otimes n}$ on $\mathbb{R}^n$, provided that $\E\Lambda^\ast_\mu<\infty$ and $\mu$ satisfies the ``$\Lambda^\ast$-condition'',
\begin{equation}\tag{$*$}\label{eq:L^ast-condition}
\lim_{x\to x^\ast}\frac{-\log\mu([x,\infty))}{\Lambda^\ast_\mu(x)}=1,
\end{equation}
where $\mathrm{supp}(\mu)=[-x^\ast,x^\ast]$. Condition \eqref{eq:L^ast-condition} essentially requires that $q_\mu$ and $e^{-\Lambda_\mu^\ast}$ tend to decrease at the same rate near the boundary. The authors in \cite{Gatz-Gian} also provided sufficient conditions for a measure to be ``admissible'' in the above sense. The result of Gatzouras--Giannopoulos has been quite recently generalised further by Pafis \cite{Pafis}, who showed that \eqref{eq:L^ast-condition} implies a sharp threshold for $\E\mu^{\otimes n}(K_N)$ around $n\E\Lambda_\mu$ even in cases of several even product measures with non-compact support, an important example demonstrated by densities of the form $c_pe^{-|x|^p}$, $p\gr 1$. After Theorem \ref{thm:L*-q-lower-general}, we can verify that the same behaviour is exhibited by arbitrary log-concave probability measures and moreover determine the sharpness of the threshold window $\varrho$.
\begin{theorem}\label{thm:prod-threshold}
Let $(\lambda_m)_{m\in\mathbb{N}}$ be a sequence of log-concave probability measures on $\mathbb{R}$ and for every $n\in\mathbb{N}$ set $\mu_n=\lambda_1\otimes\ldots\otimes \lambda_n$. For any $\varepsilon\in(0,1)$,
\[
\lim_{n\to\infty}\E\mu_n(K_N) = \begin{cases} 0, &\hbox{ if } N\ls \exp((1-\varepsilon)\E\Lambda_{\mu_n}^\ast),\\ 1, &\hbox{ if } N\gr \exp((1+\varepsilon)\E\Lambda_{\mu_n}^\ast) \end{cases}.
\]
Moreover, $(\E\Lambda^\ast_{\mu_n})^{-1}\varrho(\mu_n,\delta)\ls C\max\{(\delta n)^{-\frac{1}{2}},\log n/n\}$ for every $\delta\in(0,1)$, where $C>0$ is an absolute constant.
\end{theorem}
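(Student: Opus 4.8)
The plan is to run the Dyer–Füredi–McDiarmid scheme of Lemma~\ref{lem:DFM} with level sets of the Cramér transform playing the role of the auxiliary set $A$, using the Chernoff bound \eqref{eq:q-L^ast-upper} for the ``small $N$'' direction, Theorem~\ref{thm:L*-q-lower-general} for the ``large $N$'' direction, and controlling everything through a concentration estimate for $\Lambda^\ast_{\mu_n}$ around its mean. Since $\E\mu(K_N)$ and $\E_\mu\Phi(\Lambda^\ast_\mu)$ are affinely invariant we may assume every $\lambda_j$ is isotropic, and since $\Lambda_{\mu_n}(\xi)=\sum_{j=1}^n\Lambda_{\lambda_j}(\xi_j)$, taking Legendre transforms in separated variables gives $\Lambda^\ast_{\mu_n}(x)=\sum_{j=1}^n\Lambda^\ast_{\lambda_j}(x_j)$. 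Thus, for $X=(X_1,\dots,X_n)\sim\mu_n$, the variable $\Lambda^\ast_{\mu_n}(X)=\sum_j Y_j$ with $Y_j:=\Lambda^\ast_{\lambda_j}(X_j)$ is a sum of independent nonnegative summands.

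Next I would establish that $c_0\ls\E Y_j$ and $\E Y_j^2\ls C_0$ for absolute $0<c_0<C_0$. For the upper bound, \eqref{eq:q-L^ast-upper} gives $Y_j\ls-\log q_{\lambda_j}(X_j)$, and in dimension one $q_{\lambda_j}(X_j)=\min\{F_{\lambda_j}(X_j),1-F_{\lambda_j}(X_j)\}$ is uniformly distributed on $(0,1/2)$, so $\E(-\log q_{\lambda_j}(X_j))^2$ is an absolute constant. For the lower bound, isotropy forces $\Lambda^\ast_{\lambda_j}(x)\sim x^2/2$ as $x\to0$, with a remainder controlled uniformly (isotropic log-concave measures on $\R$ have uniformly bounded exponential moments near $0$), hence $\Lambda^\ast_{\lambda_j}(x)\gr x^2/4$ on an absolute neighbourhood $|x|\ls s_0$; combined with $\P(|X_j|\gr s_0)\gr 1-2s_0\|f_{\lambda_j}\|_\infty\gr 1/2$ this gives $\E Y_j\gr s_0^2/8$. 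Consequently $t_n:=\E\Lambda^\ast_{\mu_n}=\sum_j\E Y_j$ satisfies $c_0 n\ls t_n\ls\sqrt{C_0}\,n$ and $\Var(\Lambda^\ast_{\mu_n}(X))=\sum_j\Var Y_j\ls C_0 n$, so Chebyshev gives $\P(|\Lambda^\ast_{\mu_n}(X)-t_n|>\eta t_n)\ls C_0/(\eta^2c_0^2 n)$ for every $\eta>0$. Fix $\delta\in(0,1)$ and put $\eta=\eta_n:=c_1(\delta n)^{-1/2}$ with $c_1:=\sqrt{2C_0}/c_0$, so this probability is $\ls\delta/2$.

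For the upper estimate I would apply \eqref{eq:DFM-lemma-upper} with $A=\{x:\Lambda^\ast_{\mu_n}(x)\ls(1-\eta_n)t_n\}$: then $\mu_n(A)\ls\delta/2$, while \eqref{eq:q-L^ast-upper} gives $q_{\mu_n}(x)\ls e^{-(1-\eta_n)t_n}$ for $x\notin A$, so $\E\mu_n(K_N)\ls\delta/2+Ne^{-(1-\eta_n)t_n}\ls\delta$ as soon as $N\ls\exp((1-\eta_n-\theta_n)t_n)$ with $\theta_n:=\log(2/\delta)/(c_0 n)$; since $\sqrt\delta\log(2/\delta)\ls2$ one has $\eta_n+\theta_n\ls C(\delta n)^{-1/2}$, whence $\varrho_1(\mu_n,\delta)\gr(1-C(\delta n)^{-1/2})t_n$. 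For the lower estimate I would apply \eqref{eq:DFM-lemma-lower} with $A=\{x:\Lambda^\ast_{\mu_n}(x)\ls(1+\eta_n)t_n\}$, so $\mu_n(A)\gr1-\delta/2$; on $A$, rearranging Theorem~\ref{thm:L*-q-lower-general} with its parameter equal to $\eta_n$ gives $q_{\mu_n}(x)\gr\beta_n:=c_2\eta_n\exp(-(1+c_3\eta_n)t_n)$ for absolute $c_2,c_3>0$ (once $\eta_n\ls1/2$). Using $\binom Nn\ls N^n$ and $1-\beta_n\ls e^{-\beta_n}$ this yields $\E\mu_n(K_N)\gr(1-\tfrac\delta2)\big(1-2\exp(n\log N-(N-n)\beta_n)\big)$. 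Granting that $(N-n)\beta_n\gr n\log N+\log(4/\delta)$ whenever $N\gr\exp((1+2c_3\eta_n)t_n)$ and $n$ is past an absolute $n_0$, we get $\E\mu_n(K_N)\gr(1-\delta/2)^2\gr1-\delta$, hence $\varrho_2(\mu_n,\delta)\ls(1+2c_3\eta_n)t_n\ls(1+C(\delta n)^{-1/2})t_n$. Subtracting proves the ``Moreover'' claim, and the limit dichotomy follows by taking $\delta=\delta_n\to0$ slowly (e.g.\ $\delta_n=n^{-1/4}$), since then $C(\delta_n n)^{-1/2}\to0$.

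The crux is the inequality $(N-n)\beta_n\gr n\log N+\log(4/\delta)$, which must hold uniformly for all $N\gr\exp((1+2c_3\eta_n)t_n)$, including doubly-exponential $N$, while absorbing the additive loss $\log(1/\beta_n)=\tfrac12\log(\delta n)+(1+c_3\eta_n)t_n+O(1)$ produced by Theorem~\ref{thm:L*-q-lower-general}. I would note that $t_n\gr c_0 n$ makes $\exp((1+2c_3\eta_n)t_n)$ dwarf $2n$, so $(N-n)\beta_n\gr\tfrac12N\beta_n$; the map $N\mapsto\tfrac12N\beta_n-n\log N-\log(4/\delta)$ is increasing for $N\gr N_1:=2n\beta_n^{-1}$, and $N_1/\exp((1+2c_3\eta_n)t_n)=\tfrac{2n}{c_2\eta_n}\exp(-c_3\eta_n t_n)\ls1$ for $n\gr n_0$, because $c_3\eta_n t_n\gtrsim\sqrt{n/\delta}$ dominates $\log(2n/(c_2\eta_n))=O(\log n)$. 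Hence it suffices to check the inequality at $N=\exp((1+2c_3\eta_n)t_n)$, where the left side is $\gtrsim\eta_n\exp(c_3\eta_n t_n)\gtrsim\eta_n e^{c\sqrt{n/\delta}}$ and exceeds the polynomial-in-$n$ right side once $n\gr n_0$. Finally, in the residual range where $\delta n$ is below an absolute constant the asserted bound is trivial: one checks $\varrho(\mu_n,\delta)\ls C' t_n$ unconditionally (working with a fixed $\eta$ and the Bernstein-type tail bound valid because $\Lambda^\ast_{\mu_n}(X)$ is a sum of independent sub-exponential variables), so $C$ may be enlarged to cover it.
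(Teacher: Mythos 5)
Your overall strategy coincides with the paper's: reduce to the parameter $\upbeta(\mu_n)=\Var(\Lambda^\ast_{\mu_n})/(\E\Lambda^\ast_{\mu_n})^2$, show it is $O(1/n)$ via independence, and then feed this into the Dyer--F\"uredi--McDiarmid lemma using the Chernoff bound for small $N$ and Theorem~\ref{thm:L*-q-lower-general} for large $N$. Where the paper differs is that it packages the two applications of Lemma~\ref{lem:DFM} into the general estimates of Theorems~\ref{lower_rho_1} and~\ref{thm:rho2-upper}, which together give $\varrho(\mu,\delta)\ls C\sqrt{\upbeta(\mu)/\delta}\,\E\Lambda^\ast_\mu$, and then simply invokes that bound; you re-derive the same estimates by hand inside the proof, which is fine but redundant relative to the paper's organization (and the issues you then have to grapple with at the end — range of validity in $n$ once $\delta$ is fixed, absorbing the additive $\log(1/\beta_n)$ loss — are precisely what Theorem~\ref{thm:rho2-upper} already handles).

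The one genuinely different ingredient is your proof that $\E\Lambda^\ast_{\lambda_j}$ is bounded below by an absolute constant. The paper obtains this (Lemma~\ref{lem:var-exp-bounds}) by taking $\varepsilon=(-\log(2q(x)))^{-1}$ in its own pointwise inequality~\eqref{eq:L*-lower}, combining with $q_X(X)\overset{d}{=}\min\{U,1-U\}$, and integrating explicitly to get $\E\Lambda^\ast_X>0.1484$ (and likewise $\E(\Lambda^\ast_X)^2<4$ from the Chernoff bound). You instead argue locally: under isotropy $\Lambda^\ast_{\lambda_j}(x)\gr x^2/4$ on a uniform neighbourhood of the origin, and $\P(|X_j|\gr s_0)\gr 1/2$ from a uniform bound on $\|f_{\lambda_j}\|_\infty$. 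This works, and the monotonicity step $\Lambda^\ast_{\lambda_j}(x)\gr\Lambda^\ast_{\lambda_j}(s_0)$ for $|x|\gr s_0$ is correct, but it implicitly uses two uniform facts about isotropic log-concave measures on $\R$ — a uniform exponential-moment (or $\Lambda''$) bound near $0$ giving $\Lambda_{\lambda_j}(s)\ls s^2$ for $|s|\ls c$ absolute, and $\|f_{\lambda_j}\|_\infty\ls C$ absolute — neither of which is established in the paper. These are standard, but you should cite them; by contrast the paper's route is entirely self-contained, relying only on its Theorem~\ref{thm:L*-condition} and Proposition~\ref{prop:q-moments}, and it yields numerical constants which the authors use elsewhere (e.g.\ Proposition~\ref{prop:exp-seper-R}). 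In short: correct proof, same skeleton, but a slightly different (and less self-contained) argument for the uniform lower bound on $\E\Lambda^\ast_{\lambda_j}$, plus a hand-rolled version of the $\varrho$-estimate that the paper abstracts into a reusable theorem.
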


As it turns out, it is considerably harder for one to successfully treat the case of arbitrary, non-product, distributions on $\mathbb{R}^n$. Some partial results have been obtained for the case of the uniform measure on the simplex \cite{FPT}, and under general log-concavity or $s$-concavity assumptions \cite{CTV}. Pivovarov \cite{Pivovarov} utilised rotational invariance to obtain a sharp phase transition for the expected \emph{volume} of $K_N$ in the case that the vertices are distributed independently according to the uniform measure on $S^{n-1}$ or the Gaussian measure on $\mathbb{R}^n$. His results were later extended in \cite{BCGTT} to the case of general Beta and Beta-prime distributions (see also \cite{BKT} for an intriguing complementary result). In all these examples, rotational invariance stands up somehow to the lack of independence, reducing again the problem to one-dimensional considerations. Still however it is not clear to us whether a unified argument could let one handle the case of arbitrary rotationally invariant, say log-concave, probability distributions on $\mathbb{R}^n$. Our results allow us to give a partial answer under some further convexity assumptions. In the sequel, we identify any function $f:\mathbb{R}^n\to(0,\infty)$ such that $f(x)=f(|x|)$ with the real function $\tilde{f}:(0,\infty)\to(0,\infty)$ given by $\tilde{f}(r)=f(re_1)$.
\begin{theorem}\label{thm:LC-conv-threshold}
Suppose that $\mu_n$ is a rotationally invariant log-concave probability distribution on $\mathbb{R}^n$ with a density $f_n$ such that $x\mapsto f_n(\sqrt{|x|})$ is a log-convex function. Then $\{\mu_n\}_{n\in\mathbb{N}}$ exhibits a sharp threshold around $\E\Lambda^\ast_{\mu_n}$, with  $\lim_{n\to\infty}(\E\Lambda_{\mu_n}^\ast)^{-1}\varrho({\mu_n},\delta)=0$, for any $\delta=\omega(n^{-1})$.
\end{theorem}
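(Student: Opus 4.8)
The plan is to invoke the two parts of Lemma \ref{lem:DFM} together with the comparison between $q_{\mu_n}$ and $e^{-\Lambda^\ast_{\mu_n}}$ supplied by the Chernoff bound \eqref{eq:q-L^ast-upper} and by Theorem \ref{thm:L*-q-lower-general}, in the same spirit as the cube argument recalled after Lemma \ref{lem:DFM}, but now controlling the relevant probabilities by a concentration estimate rather than by the classical CLT. Write $F_n(x):=\Lambda^\ast_{\mu_n}(x)$ and let $T_n:=\E\Lambda^\ast_{\mu_n}=\E F_n(X)$ where $X\sim\mu_n$. The first key point is that $F_n(X)$ concentrates around $T_n$ at the right scale: one expects $\Var F_n(X)=O(n)$ (so $\sqrt{\Var F_n(X)}/T_n=O(n^{-1/2})$, since $T_n$ will turn out to be of order $n$), and hence by Chebyshev, for fixed $\e\in(0,1)$,
\[
\P\bigl(F_n(X)\ls(1-\e)T_n\bigr)\ls \frac{\Var F_n(X)}{\e^2T_n^2}\ls \frac{C}{\e^2 n},
\qquad
\P\bigl(F_n(X)\gr(1+\e)T_n\bigr)\ls\frac{C}{\e^2 n}.
\]
This is where the extra hypothesis — that $x\mapsto f_n(\sqrt{|x|})$ is log-convex — is meant to be used: through the analysis of Section \ref{sec:rot-inv} (the sharp pointwise bounds for the Cramér transform of rotationally invariant log-concave measures under such a convexity condition), it should give two-sided control $c_1 n\ls T_n\ls c_2 n$ together with the variance bound $\Var F_n(X)\ls Cn$, by comparing the radial profile of $\mu_n$ with an extremal (e.g. exponential-type) radial density for which these quantities can be computed directly. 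I would carry out this step first.

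For the upper (``small $N$'') half: set $A:=\{x:F_n(x)>(1-\e)T_n\}$. On $A^c$ we have $q_{\mu_n}(x)\ls e^{-F_n(x)}\ls e^{-(1-\e)T_n}$ by \eqref{eq:q-L^ast-upper}, while $\mu_n(A^c)=\P(F_n(X)\ls(1-\e)T_n)\ls C/(\e^2 n)$. Plugging into \eqref{eq:DFM-lemma-upper} with $N\ls e^{(1-\e)T_n}$, say with a slightly smaller exponent $(1-\e)T_n$ replaced by $(1-2\e')$-type room, gives $\E\mu_n(K_N)\ls C/(\e^2 n)+N e^{-(1-\e)T_n}\to 0$. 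Concretely, for $N\ls e^{(1-\e)T_n}$ choose the splitting at level $(1-\e/2)T_n$ so that $Ne^{-(1-\e/2)T_n}\ls e^{-(\e/2)T_n}\to 0$; this yields $\varrho_1(\mu_n,\delta_n)\gr (1-\e)T_n$ with $\delta_n$ of order $\max\{1/n,\,e^{-cT_n}\}=O(n^{-1})$.

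For the lower (``large $N$'') half: set $A:=\{x:F_n(x)<(1+\e)T_n\}$, so $\mu_n(A)\gr 1-C/(\e^2 n)$. On $A$, Theorem \ref{thm:L*-q-lower-general} (applied, say, with its own parameter equal to $\e$) gives $F_n(x)\gr(1-\e)\log\frac1{q_{\mu_n}(x)}+\log\frac{\e}{2^{1-\e}}$, hence $\log\frac1{q_{\mu_n}(x)}\ls \frac{1}{1-\e}\bigl(F_n(x)-\log\frac{\e}{2^{1-\e}}\bigr)<\frac{1+\e}{1-\e}T_n+O(1)$, i.e. $q_{\mu_n}(x)\gr e^{-(1+3\e)T_n}$ for $n$ large (absorbing the $O(1)$ into the $\e T_n$ slack since $T_n\to\infty$). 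Now feed this into \eqref{eq:DFM-lemma-lower}: with $N\gr e^{(1+\e')T_n}$ and $\binom{N}{n}\ls N^n=e^{n\log N}$, the error term $2\binom{N}{n}(1-\inf_{x\in A}q_{\mu_n}(x))^{N-n}\ls 2 e^{n\log N}e^{-(N-n)e^{-(1+3\e)T_n}}$; since $N\gr e^{(1+\e')T_n}$ dominates $n\log N\cdot e^{(1+3\e)T_n}$ once $\e'>3\e$ and $n$ is large (because $\log N\asymp T_n\asymp n$), this tends to $0$, so $\E\mu_n(K_N)\gr (1-C/(\e^2n))(1-o(1))\to 1$. Hence $\varrho_2(\mu_n,\delta_n)\ls(1+\e')T_n$.

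Combining the two halves with $\delta_n$ any sequence with $\delta_n=\omega(n^{-1})$ (which dominates the $C/(\e^2 n)$ and exponentially small errors above) gives the sharp threshold around $T_n=\E\Lambda^\ast_{\mu_n}$ in the sense of Definition \ref{def:sharp-threshold}, and moreover $\varrho_1,\varrho_2$ sandwich $T_n$ up to multiplicative $(1\pm\e)$ for every $\e$, so $(\E\Lambda^\ast_{\mu_n})^{-1}\varrho(\mu_n,\delta_n)\to 0$.

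I expect the main obstacle to be the concentration step, i.e. establishing $\Var\Lambda^\ast_{\mu_n}(X)=O(n)$ and $\E\Lambda^\ast_{\mu_n}(X)\asymp n$ under the stated log-convexity hypothesis on $f_n(\sqrt{|x|})$; the DFM-lemma bookkeeping in the two halves is then routine, given Theorem \ref{thm:L*-q-lower-general} and \eqref{eq:q-L^ast-upper}. The rotational-invariance reduction to a one-dimensional radial problem, plus the majorisation/log-concavity comparisons developed in Section \ref{sec:rot-inv}, are exactly what the extra convexity assumption is tailored to make work, and that is where the real content lies.
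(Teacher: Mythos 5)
Your overall skeleton --- Lemma~\ref{lem:DFM}, the Chernoff upper bound \eqref{eq:q-L^ast-upper}, Theorem~\ref{thm:L*-q-lower-general} for the lower bound, plus concentration of $\Lambda^\ast_{\mu_n}(X)$ around its mean at scale $\sqrt{n}$ --- is exactly the shape of the argument, and the paper packages the two DFM halves into the single estimate \eqref{eq:rho-bound} (via Theorems~\ref{lower_rho_1} and \ref{thm:rho2-upper}) rather than re-running the bookkeeping; that difference is only expository. You have also correctly isolated where the real work lies: showing $\E\Lambda^\ast_{\mu_n}\gtrsim n$ and $\Var(\Lambda^\ast_{\mu_n}(X))=O(n)$. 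However, you explicitly leave that step as a declaration (``I would carry out this step first'') rather than carrying it out, so the proposal has a genuine gap at precisely the point where the hypothesis $f_n\in\mathcal{LC}_{conv}$ must do its work.

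Moreover, the mechanism you gesture at --- ``comparing the radial profile of $\mu_n$ with an extremal (e.g.\ exponential-type) radial density for which these quantities can be computed directly'' --- is not what succeeds here. The intersecting-densities comparisons of Section~\ref{sec:rot-inv} (Theorems~\ref{thm:e^(-L*)-max}, \ref{thm:EL*-rotinv-low}, \ref{thm:gaussian-Ee^L*}) bound $\E e^{-\Lambda^\ast}$ and $\E\Lambda^\ast$, but a two-point function like $\Var(\Lambda^\ast)$ does not transfer across a density crossing in the same way, so that route does not yield $\Var(\Lambda^\ast)=O(n)$. What the paper actually does is entirely different in flavour: (i) it reduces, after normalising $X$ to be isotropic, to bounding $\Var(\Lambda^\ast_X)$ via the Poincar\'{e} inequality, which for a rotationally invariant isotropic log-concave $X$ holds with an absolute constant (Bobkov); this converts the variance bound into a gradient bound, $\Var(\Lambda^\ast_X)\ls c\,\E\bigl|(\Lambda^\ast_{X_1})'(|X|)\bigr|^2$; (ii) it then exploits $f_X\in\mathcal{LC}_{conv}$ through Theorem~\ref{thm:L^ast-maj-gaussian}, which gives $\Lambda_{X_1}(t)\gr t^2/2$; plugging $t=(\Lambda^\ast_{X_1})'(x)$ into $0\ls\Lambda^\ast_{X_1}(x)=xt-\Lambda_{X_1}(t)\ls xt-t^2/2$ yields the pointwise derivative bound $(\Lambda^\ast_{X_1})'(x)\ls 2x$, whence $\E|(\Lambda^\ast_{X_1})'(|X|)|^2\ls 4\E|X|^2=4n$; (iii) the matching lower bound $\E\Lambda^\ast_X\gr cn$ comes from \eqref{eq:EL*-BGP} together with the boundedness of $L_X$ in the rotationally invariant case. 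So the crucial use of the log-convexity hypothesis is to produce a Lipschitz-type control on $\nabla\Lambda^\ast_X$ feeding a spectral-gap inequality, not a radial density comparison. Without a concrete route to the variance bound, your proposal does not yet constitute a proof.
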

Another example of rotationally invariant distributions on $\mathbb{R}^n$ that we will deal with is that of Beta-distributions $\mu_{n,\beta}$, that is densities $f_{n,\beta}$ proportional to $(1-|x|^2)^\beta$, $\beta>-1$, supported on $B_2^n$. The name is due to the fact that if $X$ is a random vector in $\mathbb{R}^n$ with density $f_{n,\beta}$, then $|X|^2$ follows the Beta distribution with parameters $n/2$ and $\beta+1$ on $(0,1)$. We also remark that when $\beta=0$, $\mu_{n,0}$ is nothing other than the uniform distribution on $B_2^n$, whereas the uniform probability $\sigma$ on $S^{n-1}$ is recovered as the weak limit of $(\mu_{n,\beta})_{\beta}$ with $\beta\to 1$. Early works where the beta distribution comes up, primarily in the context of polytopal approximation of convex bodies include \cite{Eddy-Gale}, \cite{Buchta-Muller}, \cite{Buchta-Muller-Tichy}, \cite{Affentranger}, \cite{Dwyer}. Geometric properties of beta-polytopes have been since then a standard object of study, more recently also under the asymptotic viewpoint; apart from the already mentioned works \cite{BCGTT} and \cite{BKT}, we point the reader to \cite{BGTTTW}, \cite{Chasapis-Skarmogiannis}, \cite{Kabluchko-Temesvari-Thaele}, \cite{Kabluchko-DCG}, \cite{Kabluchko-Advances}, this list being of course aything but extensive.

Note that the case of Beta-distributions is not covered by Theorem \ref{thm:LC-conv-threshold} (in fact, $x\mapsto f_{n,\beta}(\sqrt{|x|})$ is log-concave). We can however show that a result similar to that of \cite{BCGTT} still holds for the Beta-measure in the place of volume.
\begin{theorem}\label{thm:beta-threshold}
Let $-1<\beta\ls cn$ for some absolute constant $c>0$, and $\mu_{n,\beta}$ be the probability measure on $\mathbb{R}^n$ with density $f_{n,\beta}$. Then for every $\varepsilon>0$,
\[
\lim_{n\to\infty} \E\mu_{n,\beta}(K_N) = \begin{cases} 1, &\hbox{ if } N\gr \exp((1+\varepsilon)\E\Lambda_{\mu_{n,\beta}}^\ast),\\ 0, &\hbox{ if } N\ls \exp((1-\varepsilon)\E\Lambda_{\mu_{n,\beta}}^\ast).\end{cases}
\]
Moreover, $\E\Lambda_{\mu_{n,\beta}}^\ast \sim \left(\beta+\frac{n+1}{2}\right)\log n$ and $\varrho(\mu_{n,\beta},\delta)= O((\delta\log^2 n)^{-1/2})$, for any $\delta\in(0,1)$. 
\end{theorem}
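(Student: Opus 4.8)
The plan is to run the DFM machinery of Lemma~\ref{lem:DFM} in the spirit of Theorem~\ref{thm:prod-threshold}: combine the Chernoff bound \eqref{eq:q-L^ast-upper} with the reverse comparison of Theorem~\ref{thm:L*-q-lower-general}, and control the fluctuations of $\Lambda^\ast_{\mu_{n,\beta}}(X)$ around its mean. The first step reduces everything to the one-dimensional marginal. By rotational invariance the supremum in \eqref{eq:L*-def} is attained for $\xi$ parallel to $x$, and the infimum defining $q_{\mu_{n,\beta}}$ for the direction $x/|x|$; writing $\lambda_\gamma$ for the common one-dimensional marginal of $\mu_{n,\beta}$, which is the symmetric probability measure on $[-1,1]$ with density proportional to $(1-t^2)^\gamma$, $\gamma=\beta+\tfrac{n-1}{2}$, we get $\Lambda^\ast_{\mu_{n,\beta}}(x)=\Lambda^\ast_{\lambda_\gamma}(|x|)$ and $q_{\mu_{n,\beta}}(x)=q_{\lambda_\gamma}(|x|)=\lambda_\gamma([\,|x|,1\,])$. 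In particular $\Lambda^\ast_{\mu_{n,\beta}}$ is increasing and $q_{\mu_{n,\beta}}$ decreasing as functions of $|x|$, so the level sets used in Lemma~\ref{lem:DFM} are Euclidean balls. Since $\gamma>0$ for $n\gr3$, $\lambda_\gamma$ is log-concave even in the range $\beta\in(-1,0)$ where $\mu_{n,\beta}$ itself is not; applying Theorem~\ref{thm:L*-q-lower-general} to $\lambda_\gamma$ and composing with $|\cdot|$ gives, for every $\e'\in(0,1)$, $\Lambda^\ast_{\mu_{n,\beta}}(x)\gr(1-\e')\log\tfrac{1}{q_{\mu_{n,\beta}}(x)}+\log\tfrac{\e'}{2^{1-\e'}}$, which with \eqref{eq:q-L^ast-upper} is the two-sided comparison we need. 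Recall finally that $1-|X|^2\sim\mathrm{Beta}(\beta+1,\tfrac n2)$.

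The analytic core is a sharp estimate for $\Lambda^\ast_{\lambda_\gamma}$. Its cumulant generating function $\Lambda_{\lambda_\gamma}(s)=\log\bigl(c_\gamma\int_{-1}^1 e^{st}(1-t^2)^\gamma\,dt\bigr)$, with $c_\gamma$ the normalising constant, is up to elementary factors $\log I_{\gamma+1/2}(s)$ for a modified Bessel function, and may also be treated directly by Laplace's method, the tilted density $\propto e^{st}(1-t^2)^\gamma$ being sharply peaked at its mode for $\gamma$ large. Picking $s$ so that the tilt has mean $r$ (which forces the mode to be $r+O((1-r^2)/\gamma)$) and using $\Lambda^\ast_{\lambda_\gamma}(r)=sr-\Lambda_{\lambda_\gamma}(s)$ yields
\[
\Lambda^\ast_{\lambda_\gamma}(r)=(\gamma+1)\log\frac{1}{1-r^2}+O(1),
\]
the $\tfrac12\log\gamma$ terms from $c_\gamma$ and from the Gaussian Laplace factor cancelling. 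Since $-\log(1-|X|^2)$ has mean $\psi(\beta+1+\tfrac n2)-\psi(\beta+1)$ and variance $\psi'(\beta+1)-\psi'(\beta+1+\tfrac n2)$ under $\mathrm{Beta}(\beta+1,\tfrac n2)$, integrating $\Lambda^\ast_{\lambda_\gamma}(|X|)$ gives
\[
\E\Lambda^\ast_{\mu_{n,\beta}}=\bigl(\beta+\tfrac{n+1}{2}\bigr)\bigl(\psi(\beta+1+\tfrac n2)-\psi(\beta+1)\bigr)+O(1)\sim\bigl(\beta+\tfrac{n+1}{2}\bigr)\log n,
\]
together with $\Var\bigl(\Lambda^\ast_{\mu_{n,\beta}}(X)\bigr)/\bigl(\E\Lambda^\ast_{\mu_{n,\beta}}\bigr)^2=O(1/\log^2 n)$; in any case $\E\Lambda^\ast_{\mu_{n,\beta}}$ grows at least linearly in $n$ throughout $-1<\beta\ls cn$, hence tends to $\infty$.

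Given these, the dichotomy follows from Lemma~\ref{lem:DFM} as for the cube. For $N\ls\exp\bigl((1-\e)\E\Lambda^\ast_{\mu_{n,\beta}}\bigr)$, take $A=\{\Lambda^\ast_{\mu_{n,\beta}}\ls(1-\tfrac\e2)\E\Lambda^\ast_{\mu_{n,\beta}}\}$ in \eqref{eq:DFM-lemma-upper}: by \eqref{eq:q-L^ast-upper}, $N\sup_{A^c}q_{\mu_{n,\beta}}\ls Ne^{-(1-\e/2)\E\Lambda^\ast_{\mu_{n,\beta}}}\to0$, while $\mu_{n,\beta}(A)\to0$ by Chebyshev. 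For $N\gr\exp\bigl((1+\e)\E\Lambda^\ast_{\mu_{n,\beta}}\bigr)$, take $A=\{\Lambda^\ast_{\mu_{n,\beta}}\ls(1+\tfrac\e2)\E\Lambda^\ast_{\mu_{n,\beta}}\}$ in \eqref{eq:DFM-lemma-lower}: then $\mu_{n,\beta}(A)\to1$ by Chebyshev and, taking $\e'$ small in the comparison of the first paragraph, $\inf_{x\in A}q_{\mu_{n,\beta}}(x)\gr e^{-(1+\e)\E\Lambda^\ast_{\mu_{n,\beta}}}$ for $n$ large, so $\binom Nn\bigl(1-\inf_A q_{\mu_{n,\beta}}\bigr)^{N-n}\ls N^n\exp\bigl(-(N-n)e^{-(1+\e)\E\Lambda^\ast_{\mu_{n,\beta}}}\bigr)\to0$ uniformly in such $N$, since $\E\Lambda^\ast_{\mu_{n,\beta}}$ growing linearly in $n$ makes $e^{\e\E\Lambda^\ast_{\mu_{n,\beta}}}$ dominate $n\log N$; rescaling $\e$ gives the displayed limits. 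Quantifying the same Chebyshev estimates at deviation scale $t\E\Lambda^\ast_{\mu_{n,\beta}}$ with $t\asymp\bigl(\delta^{-1}\Var(\Lambda^\ast_{\mu_{n,\beta}}(X))/(\E\Lambda^\ast_{\mu_{n,\beta}})^2\bigr)^{1/2}$ gives $\varrho_1\gr(1-t)\E\Lambda^\ast_{\mu_{n,\beta}}$ and $\varrho_2\ls(1+t)\E\Lambda^\ast_{\mu_{n,\beta}}$, hence $(\E\Lambda^\ast_{\mu_{n,\beta}})^{-1}\varrho(\mu_{n,\beta},\delta)=O\bigl((\delta\log^2 n)^{-1/2}\bigr)$.

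The step I expect to be genuinely delicate is the \emph{uniform} control of the $O(1)$ error in the displayed estimate for $\Lambda^\ast_{\lambda_\gamma}(r)$ as $r\to1$ — precisely the regime in which $|X|$ concentrates when $\beta$ is small, where $1-|X|^2\asymp(\beta+1)/n$ — since there the Laplace peak approaches the endpoint of $\mathrm{supp}(\lambda_\gamma)$ and the naive expansion degenerates. One must check that the correction stays negligible against the leading term after integrating over the sharply concentrated law of $1-|X|^2$, so as to recover the exact coefficient $\beta+\tfrac{n+1}{2}$ in $\E\Lambda^\ast_{\mu_{n,\beta}}$ and the stated variance bound; this is where precise, order-uniform Bessel asymptotics — or a careful tail comparison in the spirit of the $\Lambda^\ast$-condition, which $\lambda_\gamma$ satisfies by Theorem~\ref{thm:L*-cond-char} — are needed. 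Everything downstream — feeding the estimates into Lemma~\ref{lem:DFM}, invoking Theorem~\ref{thm:L*-q-lower-general}, and the Chebyshev bookkeeping — is routine.
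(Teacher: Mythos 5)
Your plan follows the paper's strategy in outline — reduce to the radial marginal, combine the Chernoff bound with Theorem~\ref{thm:L*-q-lower-general}, and run a Chebyshev argument through the DFM lemma — and your moment calculations in terms of $\psi$ and $\psi'$ under the $\mathrm{Beta}(\beta+1,\tfrac n2)$ law of $1-|X|^2$ match those in the paper's Lemma~\ref{lem:beta-calc}. Where you genuinely diverge is in the key analytic step: you attempt a direct Laplace-method estimate $\Lambda^\ast_{\lambda_\gamma}(r)=(\gamma+1)\log\tfrac{1}{1-r^2}+O(1)$ for the Cram\'er transform of the one-dimensional marginal, whereas the paper never touches $\Lambda^\ast$ asymptotics at all. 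Instead it works with $\omega=-\log q$, for which it has an \emph{exact} formula $q_{\mu_{n,\beta}}(x)=(1-x^2)^{\beta+(n+1)/2}h(x,n,\beta)$ with the sharp two-sided bounds \eqref{eq:h-bounds} on $h$ (quoted from Lemma~2.2 of \cite{BCGTT}), computes $\E\omega$ and $\E\omega^2$ by closed-form Beta-function identities \eqref{eq:beta-deriv}, deduces $\uptau(\mu_{n,\beta})\to 0$, and then invokes the general transfer $\upbeta\approx\uptau$ and $\E\Lambda^\ast\approx\E\omega$ from Theorem~\ref{thm:beta-tau-comp} (itself driven by Theorem~\ref{thm:L*-q-lower-general} plus the Chernoff bound) and the threshold machinery \eqref{eq:rho-bound}. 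The paper's route therefore dodges Laplace asymptotics entirely.

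The gap you flag at the end is real and is exactly where your route needs work. The claimed $(\gamma+1)\log\tfrac{1}{1-r^2}+O(1)$ formula for $\Lambda^\ast_{\lambda_\gamma}(r)$ would have to hold with an $O(1)$ that is uniform over the range where $|X|$ concentrates, which for bounded $\beta$ is $1-r^2\asymp(\beta+1)/n\to0$; in this regime the mode of the tilted density runs to the endpoint of the support and the naive second-order Laplace expansion must be controlled uniformly. You would also need to track how the $r$-dependent error propagates into $\Var(\Lambda^\ast_{\mu_{n,\beta}}(X))$, since a non-constant $O(1)$ contributes cross-terms at the scale $(\gamma+1)\cdot\Var(\log(1-|X|^2))$ which you cannot afford to lose. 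The cleaner fix — which you yourself suggest — is to replace the Laplace estimate by the exact $q$ formula with \eqref{eq:h-bounds}: since $h$ is explicitly pinned between two expressions differing only by $O(\log\gamma)$, you get $\omega=-\log q$ exactly, compute its moments, and then use the two-sided comparison you already wrote (Theorem~\ref{thm:L*-q-lower-general} with $\e'=1/n$ together with $\Lambda^\ast\ls\omega$) to transfer $\uptau\to\upbeta$; this is precisely the content of Theorem~\ref{thm:beta-tau-comp} and avoids the uniformity question altogether.

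Two smaller remarks. First, you are right that for $-1<\beta<0$ the measure $\mu_{n,\beta}$ is not log-concave, while its marginals $\lambda_\gamma$ with $\gamma=\beta+\tfrac{n-1}{2}>0$ are; so applying Corollary~\ref{cor:q-lower} (which only needs log-concave marginals) rather than Theorem~\ref{thm:L*-q-lower-general} directly is the correct move, and your observation that Lemma~\ref{lem:DFM} itself is distribution-free is the right way to avoid invoking Theorems~\ref{lower_rho_1} and \ref{thm:rho2-upper} out of their stated log-concave scope. Second, the asymptotic $\E\Lambda^\ast_{\mu_{n,\beta}}\sim(\beta+\tfrac{n+1}{2})\log n$ that you reproduce (as does the paper) should really be read with $\beta$ growing slower than any power of $n$; for $\beta\asymp n$ the $\psi$-difference is $\Theta(1)$ rather than $\Theta(\log n)$, though this does not affect the threshold dichotomy, only the cosmetics of the displayed growth rate.
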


In view of all past and recent results mentioned in this section, we are led to believe that in general, in the log-concave setting, there is a natural candidate for the ``threshold constant'' for the expected measure of random convex sets. We are yet to find a counterexample to the following.
\begin{conjecture}\label{conj:threshold}
For any log-concave probability measure $\mu_n$ on $\mathbb{R}^n$, the family $\{\mu_n\}_{n\in\mathbb{N}}$ exhibits a sharp threshold in the sense of Definition \ref{def:sharp-threshold} around $\mathfrak{T}_n=\E\Lambda^\ast_{\mu_n}$.
\end{conjecture}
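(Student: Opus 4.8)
We do not settle Conjecture~\ref{conj:threshold}, but we outline the strategy we expect will prove it and pinpoint the single step that remains open. The plan is to deduce the conjecture from Lemma~\ref{lem:DFM} in the manner of \cite{DFM,Gatz-Gian,Pafis}, using the Chernoff bound \eqref{eq:q-L^ast-upper} for the ``$0$'' regime and Theorem~\ref{thm:L*-q-lower-general} for the ``$1$'' regime, so that the whole statement collapses to one concentration assertion: for $X\sim\mu_n$ the non-negative convex random variable $\Lambda^\ast_{\mu_n}(X)$ concentrates around $\mathfrak{T}_n:=\E\Lambda^\ast_{\mu_n}$, namely
\begin{equation}\label{eq:conc-target}
\pp_{\mu_n}\!\left(\big|\Lambda^\ast_{\mu_n}(X)-\mathfrak{T}_n\big|>\e\,\mathfrak{T}_n\right)\longrightarrow 0\qquad (n\to\infty),\qquad\text{for every fixed }\e\in(0,1).
\end{equation}
Two elementary facts are also needed. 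First, $\mathfrak{T}_n\to\infty$; indeed $\mathfrak{T}_n\gr c\sqrt n$, since Theorem~\ref{thm:L*-q-lower-general} with $\e=\tfrac12$ gives $\Lambda^\ast_{\mu_n}\gr\tfrac12\log(1/q_{\mu_n})-\tfrac32\log2$, while after reducing to the isotropic case $q_{\mu_n}(x)\ls\pp(\scal{X}{x/|x|}\gr|x|)\ls Ce^{-c|x|}$ by the sub-exponential tails of one-dimensional log-concave marginals, so that $\mathfrak{T}_n\gr\tfrac12\big(c\,\E|X|-\log C\big)-\tfrac32\log2\gr c'\sqrt n$. Second, $N\mapsto\E\mu_n(K_N)$ is non-decreasing (adding a point enlarges the hull).

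\emph{The ``$0$'' end.} Fix $\e\in(0,1)$ and set $A=\{x:\Lambda^\ast_{\mu_n}(x)\ls(1-\e/2)\mathfrak{T}_n\}$. On $A^c$ we have $q_{\mu_n}\ls e^{-(1-\e/2)\mathfrak{T}_n}$ by \eqref{eq:q-L^ast-upper}, so \eqref{eq:DFM-lemma-upper} gives, for every $N\ls e^{(1-\e)\mathfrak{T}_n}$,
\begin{equation}\label{eq:rho1}
\E\mu_n(K_N)\ls\mu_n(A)+e^{(1-\e)\mathfrak{T}_n}\,e^{-(1-\e/2)\mathfrak{T}_n}=\mu_n(A)+e^{-(\e/2)\mathfrak{T}_n}.
\end{equation}
The last term vanishes since $\mathfrak{T}_n\to\infty$, and $\mu_n(A)=\pp_{\mu_n}(\Lambda^\ast_{\mu_n}(X)\ls(1-\e/2)\mathfrak{T}_n)\to0$ by \eqref{eq:conc-target}; hence $\varrho_1(\mu_n,\delta_n^{(1)})\gr(1-\e)\mathfrak{T}_n$ with $\delta_n^{(1)}:=\mu_n(A)+e^{-(\e/2)\mathfrak{T}_n}\to0$.

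\emph{The ``$1$'' end.} By monotonicity it suffices to bound $\E\mu_n(K_{N^\ast})$ from below for $N^\ast:=\lceil e^{(1+\e)\mathfrak{T}_n}\rceil$. Put $A=\{x:\Lambda^\ast_{\mu_n}(x)\ls(1+\e/4)\mathfrak{T}_n\}$ and apply Theorem~\ref{thm:L*-q-lower-general} with $\eta=\eta(\e)$ small enough that $\tfrac{1+\e/4}{1-\eta}\ls1+\e/2$; rearranging \eqref{eq:L*-q-lower-general} gives, for $x\in A$, $q_{\mu_n}(x)\gr c_\eta\,e^{-(1+\e/2)\mathfrak{T}_n}$ for a constant $c_\eta>0$. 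Since $\binom{N^\ast}{n}\ls(eN^\ast/n)^n\ls e^{2n\mathfrak{T}_n}$ and $(N^\ast-n)\,c_\eta e^{-(1+\e/2)\mathfrak{T}_n}\gr\tfrac{c_\eta}{2}\,e^{(\e/2)\mathfrak{T}_n}$, \eqref{eq:DFM-lemma-lower} yields
\begin{equation}\label{eq:rho2}
\E\mu_n(K_{N^\ast})\gr\mu_n(A)\Big(1-2\exp\big(2n\mathfrak{T}_n-\tfrac{c_\eta}{2}\,e^{(\e/2)\mathfrak{T}_n}\big)\Big).
\end{equation}
As $\mathfrak{T}_n\gr c\sqrt n$, the exponent $2n\mathfrak{T}_n-\tfrac{c_\eta}{2}e^{(\e/2)\mathfrak{T}_n}\to-\infty$, while $\mu_n(A)=\pp_{\mu_n}(\Lambda^\ast_{\mu_n}(X)\ls(1+\e/4)\mathfrak{T}_n)\to1$ by \eqref{eq:conc-target}; thus $\varrho_2(\mu_n,\delta_n^{(2)})\ls(1+\e)\mathfrak{T}_n$ with $\delta_n^{(2)}\to0$. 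Taking $\delta_n:=\max\{\delta_n^{(1)},\delta_n^{(2)}\}$ (and relabelling $\e$) establishes a sharp threshold around $\mathfrak{T}_n=\E\Lambda^\ast_{\mu_n}$ in the sense of Definition~\ref{def:sharp-threshold}; moreover the threshold window $\varrho(\mu_n,\delta_n)$ inherits whatever quantitative rate one has in \eqref{eq:conc-target} (Chebyshev, if one has a variance bound).

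\emph{The obstacle.} Everything hinges on \eqref{eq:conc-target}. For product measures $\mu_n=\lambda_1\otimes\cdots\otimes\lambda_n$ it is just the law of large numbers for the independent sum $\Lambda^\ast_{\mu_n}(X)=\sum_j\Lambda^\ast_{\lambda_j}(X_j)$ (the summands having finite variance by log-concavity of the $\lambda_j$), which is how Theorem~\ref{thm:prod-threshold} is obtained; for rotationally invariant $\mu_n$ it reduces to thin-shell concentration of $|X|$, underlying Theorems~\ref{thm:LC-conv-threshold} and \ref{thm:beta-threshold}. In general $\Lambda^\ast_{\mu_n}$ is convex but not Lipschitz, and convex functionals of log-concave vectors need not concentrate at all (a single linear coordinate already fails), so one must exploit that $\Lambda^\ast_{\mu_n}$ is a genuinely $n$-dimensional, sum-like object — for instance via a Poincar\'e bound $\Var\big(\Lambda^\ast_{\mu_n}(X)\big)\ls C_{\mathrm P}(\mu_n)\,\E|\nabla\Lambda^\ast_{\mu_n}(X)|^2$ (with $C_{\mathrm P}(\mu_n)=O(n)$, or better by recent progress on the KLS conjecture) combined with $\mathfrak{T}_n\gtrsim n$, or via a localisation/needle decomposition reducing \eqref{eq:conc-target} to the one-dimensional case. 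The difficulty — and the reason this is still only a conjecture — is that $\nabla\Lambda^\ast_{\mu_n}=(\nabla\Lambda_{\mu_n})^{-1}$ blows up near the boundary of $\mathrm{supp}(\mu_n)$ (already for $\mu_n$ uniform on a cube one has $\E|\nabla\Lambda^\ast_{\mu_n}(X)|^2=\infty$, even though the variance is finite by the product structure), so the crude gradient estimate cannot see that the boundary contribution to the variance is in fact controlled. Ruling this out uniformly in $n$ — essentially a dimension-free version of the statement that $q_{\mu_n}$ and $e^{-\Lambda^\ast_{\mu_n}}$ decay in sync in every direction — is the crux, and is where I expect the main work to lie.
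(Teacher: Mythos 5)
This statement is left open as a conjecture in the paper, so there is no proof to compare against; you correctly recognize this and do not overclaim. Your reduction --- via Lemma \ref{lem:DFM}, the Chernoff bound \eqref{eq:q-L^ast-upper}, and Theorem \ref{thm:L*-q-lower-general} --- to the single assertion that $\Lambda^\ast_{\mu_n}(X)$ concentrates about $\E\Lambda^\ast_{\mu_n}$ is exactly the framework the paper employs: Theorems \ref{lower_rho_1} and \ref{thm:rho2-upper} together with \eqref{eq:rho-bound} phrase this hypothesis as $\upbeta(\mu_n)\to 0$, which the paper verifies only in the special cases of Theorems \ref{thm:prod-threshold}, \ref{thm:LC-conv-threshold} and \ref{thm:beta-threshold}. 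Your diagnosis of the obstacle --- that $\Lambda^\ast_{\mu_n}$ is convex but not Lipschitz, and a crude Poincar\'{e} bound fails because $|\nabla\Lambda^\ast_{\mu_n}|$ blows up near $\partial\,\mathrm{supp}(\mu_n)$ (already $\E|\nabla\Lambda^\ast_{\mu_n}(X)|^2=\infty$ for the cube) --- is accurate and consistent with the paper's own remark that the conjecture remains open even for uniform densities on convex bodies.
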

Even though Conjecture \ref{conj:threshold} still remains elusive even for the case of uniform probability densities on convex bodies, our current results provide some further examples of measures that support it, namely $n$-products of arbitrary one-dimensional log-concave distributions (Theorem \ref{thm:prod-threshold}) and a number of rotationally invariant probability distributions on $\mathbb{R}^n$ including densities proportional to $e^{{-|x|}^p}$, $p\in(0,2)$, Gamma-type densities proportional to $|x|^{\alpha-1}e^{-|x|}$, $0<\alpha<n$ (Theorem \ref{thm:LC-conv-threshold}, see Remark \ref{rem:rot-inv}) and Beta-type densities (Theorem \ref{thm:beta-threshold}).

Recently, a systematic study of the problem was carried out in \cite{BGP}, under only the assumption of log-concavity of the underlying distribution. The strategy developed in \cite{BGP} builds upon the previous work of the same authors \cite{BGP-q}, where very accurate upper and lower bounds are provided for the expected value of the half-space depth $q_\mu$ for an arbitrary log-concave probability measure $\mu$ on $\mathbb{R}^n$. In particular, it was proved in \cite{BGP-q} that there are absolute constants $c_1,c_2>0$ such that 
\begin{equation}\label{eq:Eq-BGP}
e^{-c_1n}\ls \E_\mu q_\mu(X)\ls e^{-c_2n/L_\mu^2}
\end{equation}
for any log-concave probability measure $\mu$ on $\mathbb{R}^n$, where $L_\mu$ is the isotropic constant of $\mu$. We remark that the upper bound in \eqref{eq:Eq-BGP} follows from the stronger inequality (cf. \eqref{eq:q-L*-chernoff})
\begin{equation}\label{eq:Ee^-L*-BGP}
\E_\mu e^{-\Lambda^\ast_\mu(X)}\ls e^{-c_2n/L_\mu^2},
\end{equation}
for any log-concave probability measure $\mu$ on $\mathbb{R}^n$. Note also that, by Jensen's inequality, the latter implies that
\begin{equation}\label{eq:EL*-BGP}
\E_\mu\Lambda_\mu^\ast(X)\gr cn/L_\mu^2.
\end{equation}
Given the recent resolution of the isotropic constant conjecture, inequality \eqref{eq:EL*-BGP} essentially states that on average, the Cram\'{e}r transform of any log-concave probability measure on $\mathbb{R}^n$ grows at least linearly with the dimension. This is a very useful estimate that we will recall in a number of instances in the sequel. In Section \ref{sec:rot-inv}, we will sharpen the bounds \eqref{eq:Ee^-L*-BGP} and \eqref{eq:EL*-BGP} for some classes of rotationally invariant distributions.

The problem of obtaining sharp upper bounds for $\E_\mu\Lambda_\mu^\ast$ is also interesting and relevant to our applications. Let us mention here that very recently Giannopoulos and Tziotziou \cite{GianTz}, building upon the previous works \cite{BGP-q}, \cite{BGP}, proved that the inequality
\begin{equation}\label{eq:EL*-upper-general}
\left(\E_\mu(\Lambda_\mu^\ast)^2\right)^{\frac{1}{2}}\ls cn\log n
\end{equation}
for some absolute constant $c>0$, is valid for any centered log-concave probability measure on $\mathbb{R}^n$. This estimate is optimal; one can check that $\E\Lambda^\ast_\mu$ grows at the same rate with $n$ in the case that $\mu=\mu_{B_2^n}$ is the uniform measure on the Euclidean ball.

The approach of \cite{BGP} led to a unified treatment (and refinement) of the results in \cite{DFM} and \cite{Pivovarov}, still leaving however several open questions for the general case. Theorem \ref{thm:L*-q-lower-general} actually lets us extend some of the central results of \cite{BGP} in the general setting of log-concave measures. This together with the study of the rotationally-invariant setting carried out in Section \ref{sec:rot-inv} will lead us to the results announced in the present section. We elaborate more on the details and conclude with the proofs in Section \ref{sec:thresholds}.

\section{Lower estimate for the Cram\'{e}r transform under log-concavity}\label{sec:L*-low}

Given two random variables $X$ and $Y$, we say that $X$ stochastically dominates $Y$ (in the first order) if $\P(Y\ls x)\ls \P(X\ls x)$ for every $x\in\mathbb{R}$. It is well known that this is equivalent to the condition $\E(f(X))\ls\E(f(Y))$ for any non-decreasing $f:\mathbb{R}\to\mathbb{R}$. Given a random variable $X$, we denote by $F(x)=\P(X\ls x)$, the cumulative distribution function of $X$, and $Z(x)=\P(X\gr x)$. It is well known that if $X$ has a continuous c.d.f. $F$, then the random variable $F(X)$ is uniform on $[0,1]$. This can be verified immediately in the case that $F$ is strictly increasing, since then
\[
\P(F(X)\ls x) = \P(X\ls F^{-1}(x)) = x = \P(U\ls x)
\]
for any $0<x<1$, where $U$ is uniform on $[0,1]$ (the argument needs to be only slightly modified to capture the general case). In the next lemma, we verify that $F(X)$ and $Z(X)$ always stochastically dominate the uniform distribution.
\begin{lemma}\label{lem:stoc-dom}
Let $U$ be a random variable uniformly distributed on $[0,1]$, and $X$ be any random variable. Then the random variables $F(X), Z(X)$ stochastically dominate $U$.
\end{lemma}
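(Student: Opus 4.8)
The plan is to reduce both assertions to a single elementary fact about cumulative distribution functions and then prove that fact by inspecting one sublevel set. Since $F(X)$ and $Z(X)$ take values in $[0,1]$, the lemma amounts to the pointwise estimates $\P(F(X)\ls t)\ls t$ and $\P(Z(X)\ls t)\ls t$ for every $t\in[0,1]$. I would first isolate the general statement $(\star)$: for any random variable $W$ with cumulative distribution function $G$, one has $\P(G(W)\ls t)\ls t$ for all $t\in[0,1]$. Granting $(\star)$, the estimate for $F(X)$ is just the case $W=X$, $G=F$. For $Z(X)$, let $G$ be the c.d.f.\ of $-X$; then $G(-x)=\P(-X\ls -x)=\P(X\gr x)=Z(x)$, so $Z(X)=G(-X)$, and $(\star)$ applied to $-X$ gives $\P(Z(X)\ls t)=\P(G(-X)\ls t)\ls t$. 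This reduction disposes of the $Z$ case with no separate bookkeeping for left- versus right-continuity, since $G$ is itself a bona fide c.d.f.

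To prove $(\star)$, fix $t\in(0,1)$ (the cases $t\in\{0,1\}$ being immediate) and consider the sublevel set $A:=\{x\in\R:G(x)\ls t\}$. Since $G$ is non-decreasing, $A$ is downward closed, and since $G(-\infty)=0<t<1=G(+\infty)$ it is a non-empty proper half-line, $A=(-\infty,a]$ or $A=(-\infty,a)$ with $a:=\sup A\in\R$. Now $\P(G(W)\ls t)=\P(W\in A)$, and this is at most $t$ in either case: if $A=(-\infty,a]$ then $\P(W\in A)=G(a)\ls t$ because $a\in A$, while if $A=(-\infty,a)$ then $\P(W\in A)=\lim_{s\uparrow a}G(s)\ls t$ because $G(s)\ls t$ for every $s<a$. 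This establishes $(\star)$, and hence the lemma. (When $X$ has a continuous c.d.f.\ one in fact gets equality in $(\star)$, recovering the familiar fact that $F(X)$ is uniform on $[0,1]$.)

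I do not expect a genuine obstacle here; the only point requiring a little care is whether the half-line $A$ contains its right endpoint — equivalently, the distinction between $\P(W\ls a)$ and $\P(W<a)$ — but monotonicity of $G$ bounds both by $t$. An equivalent, slightly slicker route uses the quantile function $Q(t)=\inf\{x:G(x)\gr t\}$ together with the standard equivalence $Q(t)\ls x\iff t\ls G(x)$, which yields $\{G(W)<t\}=\{W<Q(t)\}$ and hence $\P(G(W)<t)=\lim_{s\uparrow Q(t)}G(s)\ls t$; letting the threshold decrease to $t$ then recovers $(\star)$. I would present the direct sublevel-set argument, as it is the most self-contained.
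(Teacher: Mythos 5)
Your proof is correct, and it takes a cleaner route than the paper's. The paper treats $F(X)$ and $Z(X)$ as two separate problems and argues directly about the discontinuity structure: it enumerates the jump points $(x_k)$ of $F$, introduces the left limits $a_k$ and values $b_k$ at each jump, and then bounds $\P(F(X)\ls s)$ and $\P(Z(X)\ls s)$ by a case analysis according to whether $s$ falls inside a jump interval $[a_k,b_k)$ or in a gap between jumps; the $Z$ case requires its own parallel bookkeeping because $Z$ is left-continuous rather than right-continuous. Your argument instead isolates the single elementary fact $(\star)$ — for any c.d.f.\ $G$ and any $t\in[0,1]$, $\P(G(W)\ls t)\ls t$ — and proves it in one stroke by observing that the sublevel set $\{G\ls t\}$ is a downward-closed half-line whose mass is controlled by $t$ whether or not the endpoint is included. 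The reduction $Z(X)=G_{-X}(-X)$, with $G_{-X}$ the c.d.f.\ of $-X$, then disposes of the $Z$ case with no new analysis, precisely because a c.d.f.\ is always right-continuous and the left-continuity of $Z$ never needs to be confronted head-on. Both proofs establish exactly the inequality $\P(F(X)\ls s)\ls s$ and $\P(Z(X)\ls s)\ls s$ used later in Proposition \ref{prop:q-moments}; yours is shorter, avoids the discontinuity enumeration entirely, and the modular lemma $(\star)$ is a reusable fact, so I would regard it as the preferable exposition.
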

\begin{proof}
We need to show that for any $s\in(0,1)$,
\[
\max\{\P(Z(X)\ls s),\P(F(X)\ls s)\} \ls \P(U\ls s)=s.
\]
Since $F$ is right-continuous and non-decreasing, it has countably many jump discontinuities, say $(x_k)_{k\in\mathbb{Z}}$, with $x_{k-1}<x_k$ for every $k\in\mathbb{Z}$. For each $k\in\mathbb{N}$, let $a_k=\lim_{x\to x_k^-}F(x)$ and $b_k=F(x_k)$, so that we always have $a_k<b_k$. Note also that $\P(F(X)\in[a_k,b_k))=0$ for all $k$. Given $s\in(0,1)$, we distinguish two cases: If $s\in[a_k,b_k)$ for some $k\in\mathbb{N}$, then
\[
\P(F(X)\ls s)=\P(F(X)<a_k)+\P(F(X)\in[a_k,s]) = \P(X< x_k)=a_k<s.
\]
If again $b_{k-1}<a_k$ and $s\in[b_{k-1},a_k)$, then $F$ is continuous on $s$. Letting $m:=\sup\{y:F(y)\ls s\}$, we can check that
\[
\P(F(X)\ls s)=\P(X\in\{y:F(y)\ls s\}) = \P(X\ls m)=F(m)=s,
\]
by continuity.

The result for $Z$ then follows: since $F(X)$ stochastically dominates $U$ for any $X$, $Z(X)=F(-X)$ also stochastically dominates $U$.
\end{proof}

We will use Lemma \ref{lem:stoc-dom} to obtain good bounds on the $p$-th moments of $q_X(X)$, $p>-1$, for an arbitrary random variable $X$.
\begin{proposition}\label{prop:q-moments}
Let $X$ be any random variable and $p> -1$.\\
\noindent {\rm a)} If $p>0$, then $(2^p(p+1))^{-1}\ls \E(q(X)^p)\ls 1$.\\
\noindent {\rm b)} If $p<0$, then $1\ls \E(q(X)^p)\ls (2^p(p+1))^{-1}$.
\end{proposition}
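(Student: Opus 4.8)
The plan is to reduce both inequalities to a single tail estimate for the one-dimensional half-space depth and then read off the moments. If $\mu$ denotes the law of $X$, then $q(x)=q_\mu(x)=\min\{F(x),Z(x)\}$ with $F(x)=\p{X\ls x}$ and $Z(x)=\p{X\gr x}$, as above. Since $0\ls q(x)\ls F(x)\ls 1$, we have $q(X)\in[0,1]$, which already gives the ``outer'' bounds $\E(q(X)^p)\ls 1$ for $p>0$ and $\E(q(X)^p)\gr 1$ for $p<0$. The remaining content is the single estimate
\[
\p{q(X)\ls s}\ls 2s\qquad\text{for every }s>0,
\]
which follows from a union bound: $\{q(X)\ls s\}=\{F(X)\ls s\}\cup\{Z(X)\ls s\}$, and by Lemma \ref{lem:stoc-dom} each of $\p{F(X)\ls s}$, $\p{Z(X)\ls s}$ is at most $s$. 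Letting $s\to 0^+$ also shows $\p{q(X)=0}=0$, so $q(X)\in(0,1]$ almost surely.

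Now let $U$ be uniform on $[0,1]$. The estimate above says precisely that $\p{q(X)\ls s}\ls\min\{2s,1\}=\p{\tfrac12 U\ls s}$ for all $s$, i.e.\ the cumulative distribution function of $q(X)$ lies pointwise below that of $\tfrac12 U$. For $p>0$ the function $t\mapsto t^p$ is non-decreasing and bounded on $[0,1]$, so the equivalent formulation of stochastic domination recalled at the start of this section gives
\[
\E(q(X)^p)\gr\E\!\left(\big(\tfrac12 U\big)^p\right)=2^{-p}\int_0^1 u^p\,du=\frac{1}{2^p(p+1)},
\]
which is part {\rm a)}. For $-1<p<0$ I would argue via the layer-cake formula instead: since $q(X)>0$ a.s.\ and $a\mapsto a^p$ is a strictly decreasing bijection of $(0,\infty)$, one has $\{q(X)^p>t\}=\{q(X)<t^{1/p}\}$ for each $t>0$ (and likewise for $\tfrac12 U$), while $\p{q(X)<s}\ls\p{\tfrac12 U<s}$ for all $s$ by passing to left limits in the tail estimate. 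Hence
\[
\E(q(X)^p)=\int_0^\infty\p{q(X)<t^{1/p}}\,dt\ls\int_0^\infty\p{\tfrac12 U<t^{1/p}}\,dt=\E\!\left(\big(\tfrac12 U\big)^p\right)=\frac{1}{2^p(p+1)},
\]
the last equality being legitimate precisely because $p>-1$; this is part {\rm b)}.

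The only step carrying any weight is the tail estimate $\p{q(X)\ls s}\ls 2s$, and all of its difficulty is already packed into Lemma \ref{lem:stoc-dom} (whose proof is where the atoms of $\mu$ are handled); everything after that is a union bound and the layer-cake formula. It is worth noting that both inequalities are sharp: when $X$ has a continuous distribution, $F(X)$ is uniform on $[0,1]$ and $Z(X)=1-F(X)$, so $q(X)=\min\{F(X),1-F(X)\}$ is uniform on $[0,\tfrac12]$, i.e.\ $q(X)\stackrel{d}{=}\tfrac12 U$ and all the inequalities above become equalities.
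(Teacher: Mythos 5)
Your proof is correct and relies on the same central ingredient as the paper's proof, namely the tail bound $\p{q(X)\ls s}\ls 2s$ coming from Lemma~\ref{lem:stoc-dom} and a union bound. The only real difference is in packaging: the paper integrates the tail directly, splitting at $M=2^{-p}$ so that the estimate $\P(q(X)^p\gr x)\gr 1-2x^{1/p}$ stays non-negative, whereas you observe that the tail bound is exactly the statement that $q(X)$ dominates $\tfrac12 U$ in distribution, so the extremal value $\frac{1}{2^p(p+1)}$ just becomes $\E\bigl((\tfrac12 U)^p\bigr)$. That framing absorbs the truncation at $M$ automatically and makes the sharpness (continuous $X$ gives $q(X)\stackrel{d}{=}\tfrac12 U$) visible without a separate remark, but the substance is the same. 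Two small points you handled correctly that are worth flagging: for $p>0$ the comparison needs $t\mapsto t^p$ extended monotonically to all of $\R$ (harmless since both variables live in $[0,1]$), and for $-1<p<0$ the layer-cake identity requires $q(X)>0$ a.s., which you justified by taking $s\to 0^+$ in the tail bound; the integrability at $t\to\infty$ uses $p>-1$.
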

\begin{remark}\label{rem:q-uniform} Recall that if $X$ is a continuous random variable, then $q(x)=\min\{F(x),1-F(x)\}$. If moreover $F$ is continuous, then $F(X)$ is uniform on $[0,1]$, so for any $p> -1$,
\begin{align*}
\E(q(X)^p) &=\E(\min\{F(X),1-F(X)\}^p) = \E(\min\{U,1-U\}^p)\\ &= \int_0^\frac{1}{2} u^p\,du + \int_\frac{1}{2}^1 (1-u)^p\,du = (2^p(p+1))^{-1}.
\end{align*}
This, together with the trivial fact $q=1$ for any atomic distribution, shows that the inequalities in Proposition \ref{prop:q-moments} are sharp.
\end{remark}

\begin{proof} The upper and lower bound in (a) and (b) respectively are due to the trivial inequality $q(x)\ls 1$. To prove the lower bound in (a), let $M=2^{-p}$ and write
\[
\E(q(X)^p)=\int_0^\infty \P(q(X)^p\gr x)\,dx\gr \int_0^M \P(q(X)^p\gr x)\,dx \gr \int_0^M 1-\P(q(X)\ls x^{\frac{1}{p}})\,dx.
\]
By Lemma \ref{lem:stoc-dom},
\[
\P(q(X)\ls x^{\frac{1}{p}})\ls \P(F(X)\ls x^{\frac{1}{p}})+\P(Z(X)\ls x^{\frac{1}{p}})\ls 2x^{\frac{1}{p}}.
\]
It follows then that
\[
\E(q(X)^p) \gr \int_0^M 1-2x^{\frac{1}{p}}\,dx = M-2\frac{p}{p+1}M^{\frac{p+1}{p}}=\frac{2^{-p}}{p+1}.
\]
Similarly, in the case that $-1<p<0$, we have that
\[
\E(q(X)^p)\ls \int_0^M 1\,dx+\int_M^\infty \P(q(X)^p\gr x) = M+\int_M^\infty \P(q(X)\ls x^{\frac{1}{p}})\,dx,
\]
where again we let $M=2^{-p}$, and using $\P(q(X)\ls x^{\frac{1}{p}})\ls 2x^{\frac{1}{p}}$ this time we get
\[
\E(q(X)^p)\ls M-2\frac{p}{p+1}M^{\frac{p}{p+1}}=\frac{2^{-p}}{p+1},
\]
which is the upper bound in (b).
\end{proof}

With the aid of Proposition \ref{prop:q-moments} we are in place to prove the following one-dimensional inequality from which Theorem \ref{thm:L*-q-lower-general} will follow. In the statement we consider also the case of discrete measures in $\mathbb{R}$. We say that a discrete probability distribution $\mu$ with probability mass function $p:\mathbb{Z}\to(0,\infty)$ is log-concave if $p$ is log-concave on $\mathbb{Z}$, that is, $p(k)^2\gr p(k-1)p(k+1)$ for any $k\in\mathbb{Z}$. It can be proved that the tail and distribution functions of a log-concave p.m.f. are also log-concave in the above sense (see for example \cite[Theorem 2]{Pinelis} for a more general statement), and consequently this is also the case for $q_\mu$. For an arbitrary subset $A$ of $\mathbb{R}$ we let here $\conv(A)$ denote the smallest interval containing $A$.
\begin{theorem}\label{thm:L*-condition}
Let $\mu$ be a log-concave probability measure on $\mathbb{R}$ that is either absolutely continuous with respect to the Lebesgue measure, or supported on a subset of the integers. For every $\varepsilon\in(0,1)$ and every $x\in\conv(\mathrm{supp}(\mu))$, 
\begin{equation}\label{eq:L*-lower}
\Lambda_\mu^\ast(x)\geq (1-\varepsilon)\log\frac{1}{q_{\mu}(x)}+\log\frac{\varepsilon}{2^{1-\varepsilon}}.
\end{equation}
\end{theorem}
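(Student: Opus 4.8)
The plan is to derive \eqref{eq:L*-lower} from the moment bound in Proposition \ref{prop:q-moments} together with the Chernoff inequality \eqref{eq:q-L^ast-upper} and a convexity argument exploiting that $q_\mu$ is log-concave (Lemma \ref{lem:q-logconc}, or its discrete analogue recalled just above the statement). The starting observation is that for $p>0$,
\[
\E_\mu\!\left(q_\mu(X)^{-p}\right)=\E_\mu\!\left(q_\mu(X)^{-p}\right)\ls \frac{1}{2^{-p}(1-p)}
\]
provided $0<p<1$, by part (b) of Proposition \ref{prop:q-moments} applied with $-p$ in place of $p$. On the other hand \eqref{eq:q-L^ast-upper} gives $q_\mu(X)^{-p}\gr e^{p\Lambda_\mu^\ast(X)}$ pointwise, so $\E_\mu e^{p\Lambda_\mu^\ast(X)}\ls \big(2^{-p}(1-p)\big)^{-1}$. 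This is a bound on a moment generating function of $\Lambda_\mu^\ast(X)$, but what we want is a \emph{pointwise} lower bound on $\Lambda_\mu^\ast(x)$ in terms of $q_\mu(x)$; the passage from the integrated inequality to the pointwise one is where log-concavity enters.

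The key step: fix $x_0\in\overline{\mathrm{supp}(\mu)}$ and let $g=-\log q_\mu$, which is convex by log-concavity of $q_\mu$. Both $-\log q_\mu(x)$ and $\Lambda_\mu^\ast(x)$ are convex, the former attains its minimum value $\log 2$ roughly at a ``median'' point $m$, and near the relevant endpoint $-\log q_\mu$ is increasing. I would compare the sublevel set $\{x:q_\mu(x)\gr t\}$, which is an interval, with the mass $\mu$ puts on it, using that $\mu$ itself is log-concave so that the tail function is controlled by $q_\mu$ on that interval. Concretely, if $q_\mu(x_0)=s$, then on the half-line from $m$ to $x_0$ the function $q_\mu$ decreases from (about) $1/2$ to $s$, and the Chernoff bound on each point of a slightly shorter interval, combined with $\P_\mu(X\gr x_0')\gr$ const for $x_0'$ between $m$ and $x_0$, forces $\Lambda_\mu^\ast(x_0)$ to grow. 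The cleanest route is probably the one the introduction hints at: use convexity of $\Lambda_\mu^\ast$ to write, for $x_0$ on the positive side of the support and $x_\varepsilon=(1-\varepsilon)x_0+\varepsilon m$ (or rather the point where $q_\mu$ equals $q_\mu(x_0)^{1-\varepsilon}(1/2)^{\varepsilon}$ by log-concavity), a bound
\[
\Lambda_\mu^\ast(x_0)\gr \frac{1}{\varepsilon}\,\Lambda_\mu^\ast(x_\varepsilon) \gr \frac{1}{\varepsilon}\log\frac{1}{\P_\mu(X\gr x_\varepsilon)}\cdot(\text{error}),
\]
where the first inequality is convexity of $\Lambda_\mu^\ast$ together with $\Lambda_\mu^\ast(m)\gr 0$, and then estimate $\P_\mu(X\gr x_\varepsilon)$ below by a constant times $q_\mu(x_\varepsilon)$ using Borell's lemma / log-concavity of $\mu$. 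Tracking constants through $q_\mu(x_\varepsilon)\gr q_\mu(x_0)^{1-\varepsilon}2^{-\varepsilon}$ should produce exactly the additive term $\log\frac{\varepsilon}{2^{1-\varepsilon}}$.

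I expect the main obstacle to be the bookkeeping around the ``center'': choosing the right reference point $m$ (a point where $q_\mu$ is bounded below by a universal constant, say $1/2$ or $1/4$), and arguing that along the segment from $m$ towards $x_0$ the log-concave function $q_\mu$ obeys $q_\mu((1-\varepsilon)x_0+\varepsilon m)\gr q_\mu(x_0)^{1-\varepsilon}q_\mu(m)^{\varepsilon}$ in precisely the form needed to make the constant come out sharp. A secondary subtlety is handling the discrete case uniformly with the absolutely continuous case: there $q_\mu(x)$ need not drop below the atom mass at the endpoint, but the log-concavity of the p.m.f. (and hence of its tail and of $q_\mu$ on $\mathbb{Z}$) is enough to run the same convexity comparison, interpreting $\overline{\mathrm{supp}(\mu)}$ as the convex hull of the support. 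If the direct segment argument proves delicate, the fallback is to integrate: from $\E_\mu e^{p\Lambda_\mu^\ast(X)}\ls (2^{-p}(1-p))^{-1}$ and a reverse-Hölder/localization argument for the log-concave pair $(\mu,\Lambda_\mu^\ast)$ one can extract the pointwise statement, but I anticipate the segment-convexity proof to be shorter and to give the stated constant on the nose, so that is the route I would write up first.
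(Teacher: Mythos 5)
Your proposal does not close the argument, and the central step you sketch contains an inequality in the wrong direction. In the chain
\[
\Lambda_\mu^\ast(x_0)\gr \tfrac{1}{\varepsilon}\,\Lambda_\mu^\ast(x_\varepsilon) \gr \tfrac{1}{\varepsilon}\log\tfrac{1}{\P_\mu(X\gr x_\varepsilon)}\cdot(\text{error}),
\]
the second step appeals to a Chernoff-type bound, but Chernoff gives $\P_\mu(X\gr y)\ls e^{-\Lambda^\ast_\mu(y)}$, i.e.\ $\Lambda^\ast_\mu(y)\ls\log\frac{1}{\P_\mu(X\gr y)}$; this is an \emph{upper} bound on $\Lambda^\ast$, so it cannot serve as a lower bound on $\Lambda^\ast_\mu(x_\varepsilon)$. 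A ``reverse Chernoff'' in this pointwise form is exactly the content of the theorem being proved, so invoking it at an intermediate point is circular. The segment convexity bookkeeping you describe would, at best, transport a lower bound on $\Lambda^\ast$ from one point to another, but you never obtain a lower bound anywhere to transport; log-concavity of $q_\mu$ alone (the estimate $q_\mu(x_\varepsilon)\gr q_\mu(x_0)^{1-\varepsilon}q_\mu(m)^{\varepsilon}$) does not produce one.

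The idea that actually works, and which the paper uses, is quite different from both your primary route and your ``fallback.'' Set $g=-(1-\varepsilon)\log q_\mu$; this is convex by Lemma~\ref{lem:q-logconc}. Take the supporting (tangent) line $l(y)=ty+b$ of $g$ at $x$, so $l\ls g$ and $l(x)=g(x)$. Since $l\ls g$ pointwise, $\E e^{l(X)}\ls\E e^{g(X)}$. But $\E e^{l(X)}=e^{b}\E e^{tX}=e^{b+\Lambda_\mu(t)}$, so $b+\Lambda_\mu(t)\ls \log\E e^{g(X)}$. Now use $g(x)=tx+b$ and rearrange:
\[
g(x)-\log\E e^{g(X)}\ls tx-\Lambda_\mu(t)\ls\Lambda^\ast_\mu(x).
\]
The quantity $\E e^{g(X)}=\E q_\mu(X)^{\varepsilon-1}$ is then bounded by Proposition~\ref{prop:q-moments}(b), giving exactly the additive constant $\log\frac{\varepsilon}{2^{1-\varepsilon}}$. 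The point you do not reach is the pairing of the tangent of $g$ with the variational definition of $\Lambda^\ast_\mu$ as $\sup_t(tx-\Lambda_\mu(t))$: because the tangent is affine, $\E e^{l(X)}$ is itself of the form $e^{b+\Lambda_\mu(t)}$, and this is what converts the integrated inequality from Proposition~\ref{prop:q-moments} into a pointwise one without any ``reverse-H\"older/localization'' machinery. In the discrete case the argument is identical once $g$ is replaced by its piecewise-linear extension, as you correctly anticipate in spirit.

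Finally, your first computation $\E_\mu e^{p\Lambda^\ast_\mu(X)}\ls\bigl(2^{-p}(1-p)\bigr)^{-1}$, while correct, points in the wrong direction: the paper's proof uses the bound on $\E q_\mu(X)^{-(1-\varepsilon)}$ directly and never needs the exponential moment of $\Lambda^\ast_\mu$. I would suggest re-reading the proof of Theorem~\ref{thm:L*-condition} and noting how the tangent line's affine form is what makes the Legendre duality fire.
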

\begin{proof}
To start with the absolutely continuous case, fix $\varepsilon\in(0,1)$ and $x\in\conv(\mathrm{supp}(\mu))$. Since $\mu$ is log-concave, the functions $\mu((-\infty,y])$ and $\mu([y,\infty))$ are both log-concave and so is  $q_{\mu}$ (recall Lemma \ref{lem:q-logconc}). Consider the function $g(y)=-(1-\varepsilon)\log q_{\mu}(y)$. By the convexity of $g$, there are $t,b\in\mathbb{R}$ such that the tangent line $l(y)=ty+b$ of $g$ at $x$ satisfies $l(x)=g(x)$ and $l(y)\ls g(y)$ for all $y\neq x$. It follows that $\E e^{l(X)}\ls \E e^{g(X)}$, which is equivalent to $-\log\E e^{g(X)}\ls -\log\E e^{l(X)}=-b-\Lambda(t)$. Since $l(x)=g(x)$, we eventually get that
\[
g(x)-\log\E e^{g(X)} \ls xt-\Lambda_\mu(t)\ls \Lambda_{\mu}^\ast(x).
\]
This fact combined with Proposition \ref{prop:q-moments} (b) implies the desired inequality.

The argument is almost identical in the discrete setting: If the p.m.f. of $\mu$ is log-concave on $\mathbb{Z}$ then both $F$ and $Z$ are log-concave on $\mathbb{Z}$ and consequently so is $q_\mu$. Now let $g$ be the linear extension of $f:=-(1-\varepsilon)\log q_\mu$ on $\mathbb{R}$ (that is, $g(k+x)=f(k)+x(f(k+1) -f(k))$ for every $k\in\mathrm{supp}(\mu)$ and $x\in [0,1]$). It is easy to see that $g$ is a convex function on $\mathbb{R}$ ($g$ is actually well-defined on the smallest interval containing $\mathrm{supp}\mu$) which is piecewise linear, so it stays above its ``tangents''. Moreover, $\E e^{g(X)}=\E q(X)^{\varepsilon-1}$, since $g=f$ on $\mathrm{supp}(\mu)$, which lets us conclude the proof applying again the bound in Proposition \ref{prop:q-moments} (b).
\end{proof}

\begin{remark}
As is clear from the proof, equality in \eqref{eq:L*-lower} can be attained for some $x,\varepsilon$ if $\mu$ has a log-affine density (take for example $x=2\log 2$ and $\varepsilon=(\log 2)^{-1}$ when $\mu$ is the standard exponential distribution).
\end{remark}

Following a similar reasoning, we can also prove the characterisation of the condition \eqref{L_[limit]}.

\begin{theorem}\label{thm:L^ast-cond} Let $\mu$ be a probability measure on $\mathbb{R}$ and let $x^\ast\in[0,+\infty]$ denote the right endpoint of $\mathrm{supp}(\mu)$. Then, 
\[
\lim_{x\to x^\ast} \frac{-\log\mu([x,\infty))}{\Lambda_\mu^\ast(x)}=1.
\]
if and only if there exists a convex function $V(x)$, such that $\lim_{x\to x^\ast}\frac{-\log\mu([x,\infty))}{V(x)}=1$.
\end{theorem}
\begin{proof}
The one direction is trivial, since if the limit is equal to $1$, we take $V=\Lambda_\mu^\ast$. For the other direction, it is easy to check that $-\log\mu([x,\infty))\gr \Lambda_\mu^\ast(x)$ holds true for all $x$
(again, this is nothing else than \eqref{eq:q-L*-chernoff}). It is then enough to prove that
\[
\lim_{x\to x^\ast}\frac{\Lambda_\mu^\ast(x)}{-\log\mu([x,\infty))}\gr 1-\varepsilon, \qquad\hbox{ for all } \varepsilon>0.
\]
To this end, fix $\varepsilon>0$ and let $\delta=\varepsilon/(4-\varepsilon)$. Then there is an $M\in(0,x^\ast)$, such that 
\begin{equation}\label{sandw}(1-\delta)V(x)\leq-\log\mu([x,\infty))\leq (1+\delta)V(x),\end{equation} for all $M<x<x^\ast$.
Fix $x\in(M,x^\ast)$. Since $W=(1-\delta)^2V$ is convex, there are $t,b\in\mathbb{R}$ such that the tangent line $l(y)=ty+b$ of $W$ at $x$ satisfies $l(x)=W(x)$ and $l(y)\ls W(y)$ for all $y\neq x$. It follows that $\E e^{l(X)}\ls \E e^{W(X)}$, which is equivalent to $-\log\E e^{W(X)}\ls -\log\E e^{l(X)}=-b- \Lambda_\mu(t)$. Since $l(x)=W(x)$, we eventually get that
\[
W(x)-\log\E e^{W(X)} \ls xt- \Lambda_\mu(t)\ls \Lambda_{\mu}^\ast(x).
\]

Using \eqref{sandw} we get that 
$$\frac{(1-\delta)^2}{1+\delta}(-\log\mu([x,\infty))-\log\E e^{-(1-\delta)\log(\mu([x,\infty))}\ls W(x)-\log\E e^{W(X)}$$
Finally, the computation in the proof of Proposition \ref{prop:q-moments}(b) yields
$$-\frac{(1-\delta)^2}{1+\delta}\log\mu\left([x,\infty)\right)+\log\frac{\delta}{2^{-\delta}}\leq \Lambda_{\mu}^\ast(x),$$ for all $x>M$. Since $\frac{(1-\delta)^2}{1+\delta}\gr(1-\varepsilon/2)^2\gr 1-\varepsilon$, it follows that
\[
\frac{\Lambda_\mu^\ast(x)}{-\log(\mu([x,\infty)))} \gr 1-\varepsilon+\frac{\log (2^{-\delta}/\delta)}{\log(\mu([x,\infty)))}
\]
Now we have to distinguish between two cases: if $\P(X=x^\ast)=0$, where $X$ is distributed according to $\mu$, then taking the limit as $x\to x^\ast$ makes the right hand side above equal to $1-\varepsilon$ (since then $\lim_{x\to x^\ast}\mu([x,\infty))=0$), and since $\varepsilon>0$ was arbitrary the wanted claim is proved.

In the case $\P(X=x^\ast)>0$ on the other hand, the statement of the Theorem has already been handled in \cite[Lemma 2.8]{Gatz-Gian}.
\end{proof}

\begin{remark}
Notice that Theorem \ref{thm:L^ast-cond} gives us new, even non log-concave, examples of probability measures on $\mathbb{R}$ for which the $\Lambda^\ast$-condition \eqref{eq:L^ast-condition} holds. Consider for example
\[
\mu([x,\infty))=pe^{-\lambda_1x}+(1-p)e^{-\lambda_2x} \qquad x\in\mathbb{R},
\]
for some $p\in(0,1)$ and $0<\lambda_1<\lambda_2$. Then $\mu([x,\infty))$ is not log-concave, however taking $V(x)=\lambda_1x$ we get
\[
\lim_{x\to+\infty}\frac{-\log(\mu([x,\infty)))}{V(x)}=1.
\]
\end{remark}

We finally complete the proof of Theorem \ref{thm:L*-q-lower-general} exploiting the connection between $\Lambda_X^\ast$, $q_X$ for a random vector $X$ in $\mathbb{R}^n$ and the respective functionals of the one-dimensional marginal distributions of $X$.

\begin{corollary}\label{cor:q-lower}
Let $\mu$ be probability measure on $\mathbb{R}^n$ whose one-dimensional marginals are log-concave. Then for every $\eta>0$,
\[
\Lambda_\mu^\ast(x)\gr (1-\eta)\log\left(\frac{1}{q_\mu(x)}\right)+\log\frac{\eta}{2^{1-\eta}}.
\]
\end{corollary}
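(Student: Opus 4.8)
The plan is to reduce the $n$-dimensional inequality to its one-dimensional version, Theorem~\ref{thm:L*-condition}, by slicing $\mu$ with its one-dimensional marginals and optimising over the slicing direction. Fix $x\in\mathbb{R}^n$ and $\eta\in(0,1)$ (the parameter range of Theorem~\ref{thm:L*-condition}); I would first dispose of the degenerate case, noting that if $q_\mu(x)=0$ then some marginal of $\mu$ places $\langle x,\xi\rangle$ at or beyond the endpoint of its support, forcing $\Lambda_\mu^\ast(x)=+\infty$, so we may assume $q_\mu(x)>0$. For $\xi\in S^{n-1}$ let $\mu_\xi$ denote the law of $\langle X,\xi\rangle$, $X\sim\mu$; by hypothesis this is a log-concave probability measure on $\mathbb{R}$, hence — being log-concave on the line — either absolutely continuous or a Dirac mass, so Theorem~\ref{thm:L*-condition} is available for it. The starting observation is the identity $\Lambda_{\mu_\xi}(t)=\log\E e^{t\langle X,\xi\rangle}=\Lambda_\mu(t\xi)$; restricting the supremum defining $\Lambda_\mu^\ast(x)$ to the line $\{t\xi:t\in\mathbb{R}\}\subset\mathbb{R}^n$ then gives, for every $\xi\in S^{n-1}$,
\[
\Lambda_\mu^\ast(x)=\sup_{\zeta\in\mathbb{R}^n}\left(\langle x,\zeta\rangle-\Lambda_\mu(\zeta)\right)\;\gr\;\sup_{t\in\mathbb{R}}\left(t\langle x,\xi\rangle-\Lambda_{\mu_\xi}(t)\right)=\Lambda_{\mu_\xi}^\ast(\langle x,\xi\rangle).
\]

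Next I would invoke Theorem~\ref{thm:L*-condition} for each $\mu_\xi$. Since $q_\mu(x)>0$, both $\P(\langle X,\xi\rangle\gr\langle x,\xi\rangle)$ and $\P(\langle X,\xi\rangle\ls\langle x,\xi\rangle)=\P(\langle X,-\xi\rangle\gr\langle x,-\xi\rangle)$ are $\gr q_\mu(x)>0$, so $\langle x,\xi\rangle\in\overline{\mathrm{supp}(\mu_\xi)}$ and the theorem yields $\Lambda_{\mu_\xi}^\ast(\langle x,\xi\rangle)\gr(1-\eta)\log\frac{1}{q_{\mu_\xi}(\langle x,\xi\rangle)}+\log\frac{\eta}{2^{1-\eta}}$. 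Bounding $q_{\mu_\xi}(\langle x,\xi\rangle)=\min\{\P(\langle X,\xi\rangle\ls\langle x,\xi\rangle),\P(\langle X,\xi\rangle\gr\langle x,\xi\rangle)\}\ls\P(\langle X,\xi\rangle\gr\langle x,\xi\rangle)$ and using $1-\eta>0$, I combine this with the previous display to obtain, for every $\xi\in S^{n-1}$,
\[
\Lambda_\mu^\ast(x)\;\gr\;(1-\eta)\log\frac{1}{\P(\langle X,\xi\rangle\gr\langle x,\xi\rangle)}+\log\frac{\eta}{2^{1-\eta}}.
\]
Since the left-hand side does not depend on $\xi$, I then pass to the supremum over $\xi\in S^{n-1}$ on the right: as $t\mapsto\log(1/t)$ is decreasing and $q_\mu(x)=\inf_{\xi\in S^{n-1}}\P(\langle X,\xi\rangle\gr\langle x,\xi\rangle)$, that supremum is exactly $(1-\eta)\log\frac{1}{q_\mu(x)}+\log\frac{\eta}{2^{1-\eta}}$, which is the claimed bound.

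The step I expect to carry the real weight is this last optimisation over directions: one cannot select a near-optimal $\xi$ in advance, but must keep the per-direction inequality for \emph{all} $\xi$ and exploit that $q_\mu(x)$ is by definition the infimum over $\xi$ of the half-space probabilities appearing there, so that taking a supremum in $\xi$ converts the marginal depth $q_{\mu_\xi}$ back into $q_\mu$. Everything else is routine bookkeeping — verifying the hypotheses of Theorem~\ref{thm:L*-condition} (the membership $\langle x,\xi\rangle\in\overline{\mathrm{supp}(\mu_\xi)}$ and the absolutely-continuous-versus-discrete dichotomy for $\mu_\xi$), and handling the degenerate cases $q_\mu(x)=0$ and $\mu_\xi$ a point mass — none of which poses a genuine obstacle.
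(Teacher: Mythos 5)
Your proof takes essentially the same route as the paper's: pass to the one-dimensional marginal $\mu_\xi$ in each direction $\xi\in S^{n-1}$, apply Theorem~\ref{thm:L*-condition} to it, bound the marginal depth $q_{\mu_\xi}(\langle x,\xi\rangle)$ by the one-sided tail probability, and recover $q_\mu(x)$ as the infimum of these tails over $\xi$. The paper states the identity $\Lambda_\mu^\ast(x)=\sup_\xi\Lambda_{\mu_\xi}^\ast(\langle x,\xi\rangle)$ as an equality where you only use the trivial $\gr$ direction, and you are somewhat more careful about degeneracies ($q_\mu(x)=0$, $\mu_\xi$ a Dirac mass) that the paper does not spell out, but the argument is the same.
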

\begin{proof}
If $X$ is a random vector in $\mathbb{R}^n$ distributed according to $\mu$, for any fixed $\xi\in S^{n-1}$ we consider the random variable $\xi_X:=\langle X,\xi\rangle$. By assumption, the distribution of $\xi_X$ is log-concave so Theorem \ref{thm:L*-condition} implies that
\[
\Lambda_{\xi_X}^\ast(\langle x,\xi\rangle)\gr (1-\eta)\log\left(\frac{1}{q_{\xi_X}(\langle x,\xi\rangle)}\right)+\log\frac{\eta}{2^{1-\eta}},
\]
for every $x\in\mathbb{R}^n$. Next, note that for every $x\in\mathbb{R}^n$,
\begin{align*}
\Lambda_X^\ast(x) &= \sup_{y\in\mathbb{R}^n}\left(\langle x,y\rangle-\Lambda_X(y)\right) = \sup_{(\xi,t)\in S^{n-1}\times\mathbb{R}} \left(t\langle x,\xi\rangle-\Lambda_{\langle X,\xi\rangle}(t)\right) = \sup_{\xi\in S^{n-1}}\Lambda^\ast_{\xi_X}(\langle x,\xi\rangle).
\end{align*}
Putting things together, we get that
\begin{align*}
\Lambda_X^\ast(x) &\gr (1-\eta)\sup_{\xi\in S^{n-1}}\log\left(\frac{1}{q_{\xi_X}(\langle x,\xi\rangle)}\right)+\log\frac{\eta}{2^{1-\eta}} =(1-\eta)\log\left(\frac{1}{q_X(x)}\right)+\log\frac{\eta}{2^{1-\eta}},
\end{align*}
since
\begin{align*}
\sup_{\xi\in S^{n-1}}\log\left(\frac{1}{q_{\xi_X}(\langle x,\xi\rangle)}\right) &= \log\left(\frac{1}{\inf_{\xi\in S^{n-1}}q_{\xi_X}(\langle x,\xi\rangle)}\right)\\
        &= \log\left(\frac{1}{\inf_{\xi\in S^{n-1}}\P(\langle X,\xi\rangle\gr \langle x,\xi\rangle)}\right) = \log\left(\frac{1}{q_X(x)}\right).\qedhere
\end{align*}
\end{proof}

\begin{remark}
The assumption of log-concavity in Theorem \ref{thm:L*-condition} can be actually relaxed to the assumption that the functions $x\mapsto\mu((-\infty,x])$ and $x\mapsto\mu([x,\infty))$ are both log-concave. If $\mu$ is assumed to be an even measure, it is enough that $\mu$ has log-concave tails. Similarly, if the measure $\mu$ in Corollary \ref{cor:q-lower} is assumed to be even, it is enough to assume that one of the above functions is log-concave.
\end{remark}

\begin{remark}
The inequality of Corollary \ref{cor:q-lower} can serve as an alternate to \eqref{eq:L*-lower_BGP}, which is however valid for arbitrary log-concave measures on $\mathbb{R}^n$. To compare the two estimates, note that e.g. a choice $\eta=1/n$ in Corollary \ref{cor:q-lower} gives the lower bound $\frac{n-1}{n}(\log(1/q(x))-\log 2)-\log n$, which is larger than $\log(1/q(x))-5\sqrt{n}$ if $\log(1/q(x))\ls 5n^{3/2}$. This is practically enough for our applications, since we know that on average $\log(1/q(x))$ is of the order of $n$ (cf. \eqref{eq:Eq-BGP}).
\end{remark}

\section{Sharp bounds for rotationally invariant distributions}\label{sec:rot-inv}

If $X$ is a random vector in $\mathbb{R}^n$ following a rotationally invariant distribution, then $X\overset{d}{=}R\cdot\vartheta$, where $R$ is the radial distribution of $X$, $\vartheta$ is uniform on the Euclidean unit sphere $S^{n-1}$ and $R,\vartheta$ are independent. One can check that $R$ is a nonnegative random variable which is log-concave whenever $X$ is log-concave (if $f(x)=f(|x|)$ is the density of $X$, then the density of $R$ is $f_R(r)=n\omega_nr^{n-1}f(r)$\footnote[2]{Everywhere in this section, we identify any function $f:\mathbb{R}^n\to(0,\infty)$ such that $f(x)=f(|x|)$ with the real function $\tilde{f}:(0,\infty)\to(0,\infty)$ given by $\tilde{f}(r)=f(re_1)$.}). It is also straightforward to check that for every $x\in\mathbb{R}^n$, $\Lambda_X(x)=\Lambda_{X_1}(|x|)$ and, consequently, $\Lambda_X^\ast(x)=\Lambda_{X_1}^\ast(|x|)$, where $X_1=\langle X,e_1\rangle$. When $X$ is isotropic it follows that $\E R^2=n$.
We will denote everywhere in this section by $\vartheta$ a random vector uniformly distributed on $S^{n-1}$. Moreover, $G$ stands for a standard Gaussian random vector in $\mathbb{R}^n$ and $\mathcal E$ is the random vector with density $f(x)=c_ne^{-\sqrt{n+1}|x|}$, where $c_n=\frac{(n+1)^{\frac{n}{2}}}{n\omega_n\Gamma(n)}$, the normalisation chosen so that $\mathcal E$ is isotropic.

\subsection{Pointwise bounds on the Cram\'{e}r transform} Our first goal is to provide upper and lower pointwise bounds for the Cram\'{e}r transform of an arbitrary rotationally invariant distribution on $\mathbb{R}^n$, under an isotropicity assumption. In the case of the uniform distribution on the sphere, note that
\begin{equation}\label{eq:Lambda_theta1}
e^{\Lambda_{\vartheta_1}(t)} = \int_{S^{n-1}}e^{t\theta_1}\,d\sigma(\theta) = \frac{\Gamma\left(\frac{n}{2}\right)}{\sqrt{\pi}\Gamma\left(\frac{n-1}{2}\right)}\int_{-1}^1 e^{tr}(1-r^2)^{\frac{n-3}{2}}\,dr = \Gamma\left(\frac{n}{2}\right)\left(\frac{t}{2}\right)^{-\frac{n-2}{2}}I_{\frac{n-2}{2}}(t),
\end{equation}
where $I_a$ stands for the modified Bessel function of the first kind of order $a$. Using the series expansion
\begin{equation}\label{eq:bessel-series}
\left(\frac{t}{2}\right)^{-a}I_a(t) = \sum_{k=0}^\infty \frac{1}{k!\Gamma(k+a+1)}\left(\frac{t}{2}\right)^{2k},
\end{equation}
we get the following expression for the Laplace transform of an arbitrary radially symmetric random vector in terms of the even moments of its radial distribution.
\begin{lemma}\label{lem:L_sigma}
Let $X$ be a rotationally invariant random vector in $\mathbb{R}^n$. If $R_X$, the radial distribution of $X$, has finite moments of all orders then
\[
e^{\Lambda_{X_1}(t)} = \Gamma\left(\frac{n}{2}\right)\sum_{k=0}^\infty\frac{1}{k!\Gamma(k+n/2)}\left(\frac{t}{2}\right)^{2k}\E(R_X^{2k}).
\]
\end{lemma}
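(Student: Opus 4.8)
The plan is to condition on the radial part and then expand the resulting spherical integral as a power series. Writing $X\overset{d}{=}R\vartheta$ with $R:=R_X$ and $\vartheta$ uniform on $S^{n-1}$ independent of $R$, we have $X_1=\langle X,e_1\rangle\overset{d}{=}R\vartheta_1$, so by independence
\[
e^{\Lambda_{X_1}(t)}=\E e^{tR\vartheta_1}=\E_R\big(\E_\vartheta e^{(tR)\vartheta_1}\big)=\E_R\big(e^{\Lambda_{\vartheta_1}(tR)}\big).
\]
For a fixed $r\gr 0$, identity \eqref{eq:Lambda_theta1} (applied with parameter $tr$) together with the Bessel series \eqref{eq:bessel-series} for $a=\frac{n-2}{2}$ gives
\[
e^{\Lambda_{\vartheta_1}(tr)}=\Gamma\!\left(\tfrac n2\right)\left(\tfrac{tr}{2}\right)^{-\frac{n-2}{2}}I_{\frac{n-2}{2}}(tr)=\Gamma\!\left(\tfrac n2\right)\sum_{k=0}^\infty\frac{1}{k!\,\Gamma(k+n/2)}\left(\frac{tr}{2}\right)^{2k};
\]
since only even powers appear, both sides are even in their argument, so the identity is valid for every real value $tr$ (and for $t=0$ it is the trivial equality $1=1$).

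Substituting $r=R$ and integrating over $R$, the proof reduces to the interchange
\[
\E_R\sum_{k=0}^\infty\frac{1}{k!\,\Gamma(k+n/2)}\left(\frac{tR}{2}\right)^{2k}=\sum_{k=0}^\infty\frac{1}{k!\,\Gamma(k+n/2)}\left(\frac{t}{2}\right)^{2k}\E(R^{2k}).
\]
This is the only step requiring any care, and it turns out to be automatic: every summand is nonnegative (the coefficients $\tfrac{1}{k!\,\Gamma(k+n/2)}$ and $(t/2)^{2k}$ are $\gr 0$, and $R^{2k}\gr 0$ almost surely), so the exchange is licensed by Tonelli's theorem and holds as an identity in $[0,\infty]$; the hypothesis that $R$ has finite moments of all orders guarantees each coefficient $\E(R^{2k})$ is finite. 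Chaining the three displays yields the claimed formula, with the understanding that it is an identity in the extended reals — $e^{\Lambda_{X_1}(t)}=+\infty$ precisely when the right-hand series diverges. Thus the only genuine ingredient beyond the two cited identities is this routine Tonelli argument, and no additional restriction on $t$ is needed.
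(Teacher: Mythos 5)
Your argument is precisely the paper's: decompose $X\overset{d}{=}R\vartheta$, use the Bessel-function formula \eqref{eq:Lambda_theta1} for $e^{\Lambda_{\vartheta_1}}$ together with the series \eqref{eq:bessel-series}, and integrate in $R$. The only addition is your explicit Tonelli justification for the sum–expectation interchange, which the paper leaves implicit but which is correct and harmless to spell out.
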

\begin{proof}
Since $X\overset{d}{=}R_X\vartheta$, and due to the independence of $R$ and $\vartheta$, we have $e^{\Lambda_{X_1}(t)} = \E e^{tX_1} = \E e^{tR_X\vartheta_1} = \E_{R_X} e^{\Lambda_{\vartheta_1}(tR_X)}$. The wanted identity follows immediately then by \eqref{eq:Lambda_theta1} combined with the series representation \eqref{eq:bessel-series}.
\end{proof}
As a consequence of Lemma \ref{lem:L_sigma} and Jensen's inequality for the function $x\mapsto x^{2k}$, we get the following general bounds.
\begin{proposition}\label{prop:L*-general-upper}
Let $X$ be a rotationally invariant random vector in $\mathbb{R}^n$. If $X$ is isotropic, then $\Lambda_X(\xi)\gr\Lambda_{\sqrt{n}\vartheta}(\xi)$ for every $\xi\in\mathbb{R}^n$, and, as a consequence, $\Lambda_X^\ast\ls \Lambda^\ast_{\sqrt{n}\vartheta}\,$.
\end{proposition}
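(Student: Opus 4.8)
The plan is to work directly from the series representation in Lemma~\ref{lem:L_sigma} and apply Jensen's inequality to a suitable convex function of the squared radial variable. First I would record that $\Lambda_{\vartheta_1}$ depends on its argument only through its square: setting
\[
h(u) := \Gamma\!\left(\tfrac{n}{2}\right)\sum_{k=0}^\infty \frac{1}{k!\,\Gamma(k+n/2)}\left(\frac{u}{4}\right)^k, \qquad u\ge 0,
\]
the identities \eqref{eq:Lambda_theta1}--\eqref{eq:bessel-series} give $e^{\Lambda_{\vartheta_1}(t)} = h(t^2)$. Since $h$ is a power series with nonnegative coefficients, it is convex (indeed $h''\ge 0$ termwise) on $[0,\infty)$. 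Because $(\sqrt{n}\,\vartheta)_1 = \sqrt{n}\,\vartheta_1$, we also have $e^{\Lambda_{\sqrt{n}\,\vartheta}(\xi)} = e^{\Lambda_{(\sqrt{n}\,\vartheta)_1}(|\xi|)} = h(n|\xi|^2)$.

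Next, using the polar disintegration $X\overset{d}{=}R_X\vartheta$ with $R_X,\vartheta$ independent (exactly as in the proof of Lemma~\ref{lem:L_sigma}), I would write
\[
e^{\Lambda_{X_1}(t)} = \E e^{tX_1} = \E e^{tR_X\vartheta_1} = \E_{R_X}\, e^{\Lambda_{\vartheta_1}(tR_X)} = \E_{R_X}\, h\!\left(t^2R_X^2\right),
\]
an identity valid in $[0,\infty]$ with no integrability hypothesis, since the integrand is nonnegative (monotone convergence / Tonelli). Applying Jensen's inequality to the convex function $h$ and the random variable $t^2R_X^2$, and invoking the isotropicity assumption $\E R_X^2 = n$, I obtain
\[
e^{\Lambda_{X_1}(t)} \;=\; \E_{R_X}\, h\!\left(t^2R_X^2\right) \;\gr\; h\!\left(t^2\,\E R_X^2\right) \;=\; h(nt^2) \;=\; e^{\Lambda_{(\sqrt{n}\,\vartheta)_1}(t)}
\]
for every $t\in\mathbb{R}$, i.e. $\Lambda_{X_1}\gr \Lambda_{(\sqrt{n}\,\vartheta)_1}$ pointwise. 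Recalling that for a rotationally invariant vector one has $\Lambda_X(\xi)=\Lambda_{X_1}(|\xi|)$, this gives $\Lambda_X(\xi)\gr \Lambda_{\sqrt{n}\,\vartheta}(\xi)$ for all $\xi\in\mathbb{R}^n$.

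For the last assertion I would simply use that the Legendre--Fenchel transform in \eqref{eq:L*-def} is order-reversing: from $\Lambda_X\gr \Lambda_{\sqrt{n}\,\vartheta}$ pointwise,
\[
\Lambda_X^\ast(x) = \sup_{\xi\in\mathbb{R}^n}\bigl(\langle x,\xi\rangle-\Lambda_X(\xi)\bigr) \;\ls\; \sup_{\xi\in\mathbb{R}^n}\bigl(\langle x,\xi\rangle-\Lambda_{\sqrt{n}\,\vartheta}(\xi)\bigr) = \Lambda_{\sqrt{n}\,\vartheta}^\ast(x)
\]
for every $x\in\mathbb{R}^n$. I do not expect a real obstacle here; the only points needing a word of care are the convexity of $h$ (immediate from the nonnegativity of its coefficients) and the fact that the identity $e^{\Lambda_{X_1}(t)}=\E h(t^2R_X^2)$ and the subsequent Jensen step remain meaningful when moments are infinite — which is handled by observing the inequality is trivial wherever the left-hand side equals $+\infty$.
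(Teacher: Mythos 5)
Your proof is correct and follows essentially the same route as the paper: it rests on the series representation of Lemma~\ref{lem:L_sigma} and Jensen's inequality applied to $R_X^2$ with $\E R_X^2 = n$. The only cosmetic difference is that you apply Jensen once to the convex series $h$, whereas the paper applies it termwise to each $x\mapsto x^{k}$; the two are interchangeable.
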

We next note that among all log-concave and radially symmetric distributions, the exponential is the one that stands on the other extreme.
\begin{proposition}\label{prop:Exp-min}
If $X$ is an isotropic and log-concave random vector in $\mathbb{R}^n$ with a radially symmetric density, then $\Lambda_{X}\ls \Lambda_{\mathcal E}$ and, as a consequence, $\Lambda_{X}^\ast\gr\Lambda_{\mathcal E}^\ast$.
\end{proposition}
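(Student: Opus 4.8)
The plan is to prove $\Lambda_{X_1}\ls\Lambda_{\mathcal{E}_1}$ by using the series representation from Lemma~\ref{lem:L_sigma} together with a moment comparison between the radial distribution $R_X$ and the radial distribution $R_{\mathcal E}$ of the normalised exponential vector $\mathcal E$. By Lemma~\ref{lem:L_sigma}, for every $t$,
\[
e^{\Lambda_{X_1}(t)}=\Gamma\left(\tfrac n2\right)\sum_{k=0}^\infty\frac{1}{k!\,\Gamma(k+n/2)}\left(\tfrac t2\right)^{2k}\E(R_X^{2k}),
\]
and the analogous identity holds for $\mathcal E_1$ with $R_{\mathcal E}$ in place of $R_X$. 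Since every coefficient $\frac{1}{k!\,\Gamma(k+n/2)}(t/2)^{2k}$ is nonnegative, it suffices to show the termwise inequality $\E(R_X^{2k})\ls\E(R_{\mathcal E}^{2k})$ for all $k\gr 0$; because $\E R_X^2=\E R_{\mathcal E}^2=n$ (both vectors being isotropic) and the $k=0$ term is trivial, the content is really in the higher even moments.

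The key step is therefore the moment comparison. First I would record that $R_{\mathcal E}$ has a Gamma-type density: since $\mathcal E$ has density $c_ne^{-\sqrt{n+1}|x|}$, the radial density is proportional to $r^{n-1}e^{-\sqrt{n+1}\,r}$ on $(0,\infty)$, so $R_{\mathcal E}\overset d=(n+1)^{-1/2}\Gamma_{\mathrm rad}$ where $\Gamma_{\mathrm rad}$ has density $\propto r^{n-1}e^{-r}$; hence $\E R_{\mathcal E}^{2k}=(n+1)^{-k}\frac{\Gamma(n+2k)}{\Gamma(n)}$, and one checks $\E R_{\mathcal E}^2=n$, consistent with the stated normalisation. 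For the upper bound on $\E R_X^{2k}$ I would use that $X$ log-concave with radially symmetric density forces $R_X$ to be a log-concave random variable on $(0,\infty)$ (as noted in the text, $f_{R_X}(r)=n\omega_n r^{n-1}f(r)$ is log-concave), and that among log-concave nonnegative random variables with a prescribed second moment the radial exponential maximises all higher even moments. Concretely, one can compare densities: after normalising the second moments to agree, the ratio $f_{R_X}/f_{R_{\mathcal E}}$ changes sign in a controlled way (at most twice, by log-concavity of $f_{R_X}$ versus the log-affine-times-$r^{n-1}$ shape of $f_{R_{\mathcal E}}$), which via a standard majorisation/crossing argument yields $\E h(R_X)\ls\E h(R_{\mathcal E})$ for every convex increasing $h$ on $(0,\infty)$, in particular $h(r)=r^{2k}$. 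This crossing-point bookkeeping is the main obstacle: one must verify that $g(r):=\log f_{R_X}(r)-\log f_{R_{\mathcal E}}(r)$ is concave (the $r^{n-1}$ factors cancel and the $-\sqrt{n+1}\,r$ term is affine, so this reduces to concavity of $\log f(r)$, i.e. log-concavity of the original density), hence $g$ has at most two zeros, and then pin down the sign pattern ($-,+,-$) using that both densities integrate to $1$ and have equal second moment, so that $f_{R_X}-f_{R_{\mathcal E}}$ changes sign exactly in the pattern needed for the moment inequality.

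Once the termwise bound $\E R_X^{2k}\ls\E R_{\mathcal E}^{2k}$ is in hand, summing against the nonnegative Bessel coefficients gives $e^{\Lambda_{X_1}(t)}\ls e^{\Lambda_{\mathcal E_1}(t)}$, hence $\Lambda_{X_1}\ls\Lambda_{\mathcal E_1}$ pointwise. The conclusion $\Lambda_{X_1}^\ast\gr\Lambda_{\mathcal E_1}^\ast$ is then immediate from the definition of the Legendre transform: $\Lambda_{X_1}^\ast(x)=\sup_t\bigl(xt-\Lambda_{X_1}(t)\bigr)\gr\sup_t\bigl(xt-\Lambda_{\mathcal E_1}(t)\bigr)=\Lambda_{\mathcal E_1}^\ast(x)$, since the Legendre transform is order-reversing. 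I would also remark that the hypothesis that the density be genuinely radially symmetric (rather than merely $X$ having independent coordinates) is what lets us pass through the one-dimensional radial variable $R_X$ and invoke its log-concavity.
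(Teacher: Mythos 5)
Your proposal is correct in outline and reaches the same termwise moment inequality $\E R_X^{2k}\ls\E R_{\mathcal E}^{2k}$ that drives the paper's proof, and from there to $\Lambda_{X_1}\ls\Lambda_{\mathcal E_1}$ via Lemma~\ref{lem:L_sigma} and to $\Lambda_{X_1}^\ast\gr\Lambda_{\mathcal E_1}^\ast$ by order-reversal of the Legendre transform. Where you diverge from the paper is in how the moment inequality itself is proved. The paper invokes Borell's lemma (Lemma~\ref{lem:Borell}): the map $p\mapsto\Psi_f(p)=\Gamma(p+1)^{-1}\int_0^\infty r^pf(r)\,dr$ is log-concave, and writing $n+1=\tfrac1k(2k+n-1)+(1-\tfrac1k)(n-1)$ gives $\Psi_f(n+1)\gr\Psi_f(2k+n-1)^{1/k}\Psi_f(n-1)^{1-1/k}$, which rearranges exactly into $\E R_X^{2k}\ls\frac{\Gamma(2k+n)}{\Gamma(n)}(n+1)^{-k}=\E R_{\mathcal E}^{2k}$ after the normalisations $\Psi_f(n-1)=1/(n\omega_n\Gamma(n))$ and $\Psi_f(n+1)=n/(n\omega_n\Gamma(n+2))$. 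You instead run an intersecting-densities argument of the kind the paper itself later uses for Theorem~\ref{thm:e^(-L*)-max}: the $r^{n-1}$ factors cancel in $\log f_{R_X}-\log f_{R_{\mathcal E}}$, leaving a concave function, hence at most two zeros, and the constraints $\int(f_{R_X}-f_{R_{\mathcal E}})=\int r^2(f_{R_X}-f_{R_{\mathcal E}})=0$ force exactly two zeros with pattern $-,+,-$. That is all fine, and it is a genuinely alternative route. The paper's Borell argument is shorter and avoids any case-checking on sign changes; your approach is somewhat more pedestrian but arguably makes the extremality of $\mathcal E$ more transparent and fits the template used elsewhere in the paper.

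One imprecision you should fix before writing this up: you assert that the crossing pattern yields $\E h(R_X)\ls\E h(R_{\mathcal E})$ for every \emph{convex increasing} $h$. That conclusion (the classical second-order stochastic dominance step) requires matched \emph{first} moments, which you do not have here --- only $\E R_X^2=\E R_{\mathcal E}^2=n$ is guaranteed by isotropicity. The correct bookkeeping is to subtract a quadratic $ar^2+b$ (not an affine $ar+b$) chosen to vanish at the two crossing points $r_1<r_2$, and observe that for $h(r)=r^{2k}$ the function $h(r)-ar^2-b$ is convex in $r^2$, hence has exactly the zeros $r_1,r_2$ with sign pattern $+,-,+$, opposite to that of $f_{R_X}-f_{R_{\mathcal E}}$. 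This gives $\int h\,(f_{R_X}-f_{R_{\mathcal E}})\ls 0$ directly. Equivalently, pass to $S=R^2$ (which does have matched means) and check that $s\mapsto\log f(\sqrt s)+\sqrt{(n+1)s}$ is unimodal, so that $f_{S_X}-f_{S_{\mathcal E}}$ has pattern $-,+,-$ and the standard convex-order argument applies with $\phi(s)=s^k$. Either fix works; as written, the appeal to ``convex increasing $h$'' is not supported by the matching you actually have.
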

For the proof of Proposition \ref{prop:Exp-min}, it is enough to verify that $\Lambda_{X_1}\ls\Lambda_{{\mathcal E}_1}$ (recall that $\Lambda_X(x)=\Lambda_{X_1}(|x|)$). This is a direct consequence of the following classical fact that goes back to Borell \cite{Borell-1974} (see also \cite[Theorem 2.2.5]{BGVV}).
\begin{lemma}\label{lem:Borell}
Given $f:\mathbb{R}_+\to\mathbb{R}_+$, we define
\[
\Psi_f(p) = \frac{\int_0^\infty r^pf(r)\,dr}{\Gamma(p+1)}.
\]
If $f$ is log-concave, then $\Psi_f$ is also log-concave on $[0,\infty)$.
\end{lemma}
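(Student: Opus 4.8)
Since the statement is one-dimensional, the task is to show that $p\mapsto\log\Psi_f(p)$ is concave on $[0,\infty)$. After a routine approximation — mollifying and truncating $f$ to reduce to a continuous, strictly positive log-concave function with finite moments of all orders (Prékopa guarantees the approximants stay log-concave), and passing to the limit by monotone/dominated convergence — we may assume $\Psi_f$ is smooth and positive, so it is enough to show that for each $p_0\ge 0$ the function $\log\Psi_f$ lies below its tangent line at $p_0$.

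The plan is to realise that tangent line through a comparison with an exponential density: since $\Psi_{a\,e^{-cr}}(p)=a\,c^{-(p+1)}$ for $a,c>0$, exponential weights are exactly the log-concave weights for which $\Psi$ is log-affine. Given $p_0$, choose $a_0,c_0>0$ so that $p\mapsto\log\bigl(a_0c_0^{-(p+1)}\bigr)$ agrees with $\log\Psi_f$ in value and first derivative at $p_0$ (this fixes $c_0=e^{-(\log\Psi_f)'(p_0)}$ and then $a_0$). The desired tangent-line bound is then $\Psi_f(p)\le a_0c_0^{-(p+1)}$ for all $p$, equivalently
\[
G(p):=\int_0^\infty r^{p}\bigl(a_0e^{-c_0r}-f(r)\bigr)\,dr\ \ge\ 0\qquad (p\ge 0),
\]
and by construction $G(p_0)=G'(p_0)=0$. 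Now $a_0e^{-c_0r}-f(r)>0$ exactly where $r\mapsto f(r)e^{c_0r}<a_0$; the function $r\mapsto f(r)e^{c_0r}$ is log-concave, hence unimodal, so that set is the complement of an interval and $g:=a_0e^{-c_0r}-f$ changes sign at most twice, with pattern $+,-,+$ (it must change sign, since $G(p_0)=0$ forbids $g$ from being of one sign). Because the kernel $(p,r)\mapsto r^{p}=e^{p\log r}$ is totally positive, $p\mapsto G(p)$ is variation-diminishing — it has at most two sign changes, inheriting the pattern of $g$ — and this together with the double zero of $G$ at $p_0$ forces $G\ge 0$. This is precisely the classical inequality of Borell; we refer to \cite{Borell-1974} and \cite[Theorem 2.2.5]{BGVV} for a complete treatment. (For the instances actually used in this section, where $f$ is the non-increasing radial profile of a rotationally invariant log-concave density, there is also the elementary identity $\Psi_f(n-1)=\tfrac{1}{n!}\int_{\mathbb{R}_+^n}f(\max_i s_i)\,ds$ for integer $n$, obtained by symmetrising over the ordering of the coordinates, which exhibits $\Psi_f$ at integer exponents as an integral of the log-concave function $(s_1,\dots,s_n)\mapsto f(\max_i s_i)$ on the convex cone $\mathbb{R}_+^n$ and makes the role of log-concavity very concrete.)

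The step I expect to be the main obstacle is the rigorous control of $G$ near $p=0$ and as $p\to\infty$: one must rule out the degenerate scenario in which the sign-change count is consistent with $G\le 0$, equivalently argue that $g$ is positive near $r=0$ or near $r=\infty$ in the right way. This is exactly where the log-concavity of $f$ is indispensable and where the cited references do the careful bookkeeping; since Lemma~\ref{lem:Borell} enters this paper only as a known ingredient, invoking \cite{Borell-1974}/\cite[Theorem 2.2.5]{BGVV} for the full argument is entirely legitimate, and the sketch above pinpoints where log-concavity is used.
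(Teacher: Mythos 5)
The paper does not prove Lemma~\ref{lem:Borell}: it is stated as a classical fact with pointers to \cite{Borell-1974} and \cite[Theorem 2.2.5]{BGVV}, and no argument is given, so there is no in-paper proof against which to compare your proposal line by line. That said, what you wrote is a legitimate reconstruction of a total-positivity proof in the spirit of Borell's comparison with exponential weights: reducing the log-concavity of $\Psi_f$ to the tangent inequality $G(p)=\int_0^\infty r^p\bigl(a_0e^{-c_0r}-f(r)\bigr)\,dr\geq 0$, reading off the sign pattern $+,-,+$ of $g=a_0e^{-c_0r}-f$ from the concavity of $\log f(r)+c_0r$, and noting that the kernel $r^p=e^{p\log r}$ is totally positive are all correct steps, and the aside about the $n$-fold symmetrisation identity for $\Psi_f(n-1)$ also checks out.

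The step you flag as the remaining obstacle can be closed without any delicate ``sign-arrangement matching'' from the variation-diminishing theorem, and in fact the double zero of $G$ at $p_0$ does all the work. In the logarithmic variable $t=\log r$ write $\Phi(s)=\int e^{st}g(e^t)\,dt$, so that $G(p)=\Phi(p+1)$, and let $a<b$ be the sign-change locations of $t\mapsto g(e^t)$. The pattern $+,-,+$ gives $(t-a)(t-b)\,g(e^t)\geq 0$ pointwise, hence $(D-a)(D-b)\Phi\geq 0$ with $D=d/ds$. Set $\Psi=(D-b)\Phi$; the inequality says $e^{-as}\Psi$ is non-decreasing, and $\Psi(s_0)=\Phi'(s_0)-b\Phi(s_0)=0$ at $s_0=p_0+1$, so $\Psi\leq 0$ on $(-\infty,s_0]$ and $\Psi\geq 0$ on $[s_0,\infty)$. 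Then $(e^{-bs}\Phi)'=e^{-bs}\Psi$ is $\leq 0$ before $s_0$ and $\geq 0$ after, and $e^{-bs_0}\Phi(s_0)=0$, so $e^{-bs}\Phi\geq 0$ on both sides of $s_0$, i.e.\ $\Phi\geq 0$ everywhere, which is exactly $G\geq 0$. The cases where $g$ has one or zero sign changes are either immediate or collapse to $g\equiv 0$ under the constraint $G(p_0)=G'(p_0)=0$. So your route is sound; the one ingredient you left to the references is this elementary two-step integration of the differential inequality.
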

\begin{proof}[Proof of Proposition \ref{prop:Exp-min}]
We will use again Lemma \ref{lem:L_sigma} combined with the fact that $X$ is equidistributed to a product $R\cdot\vartheta$ with $\vartheta$ uniform on $S^{n-1}$ and $R$ independent of $\vartheta$, with density $n\omega_nr^{n-1}f(r)$. In the notation of Lemma \ref{lem:Borell}, note that $n\omega_n\Psi_f(n-1)=\frac{1}{\Gamma(n)}$ and $n\omega_n\Psi_f(n+1) = \frac{\E R^2}{\Gamma(n+2)} = \frac{n}{\Gamma(n+2)}$, since $X$ is isotropic. For any $k\in\mathbb{N}$, log-concavity of $f$, and hence of $\Psi_f$, implies that
\[
\Psi_f(n+1)\gr \Psi_f(2k+n-1)^\frac{1}{k}\Psi_f(n-1)^{1-\frac{1}{k}},
\]
which can be equivalently recast as
\[
\E R^{2k} \ls \Gamma(2k+n)(n\omega_n\Psi_f(n+1))^k(n\omega_n\Psi_f(n-1))^{1-k} = \frac{\Gamma(2k+n)}{\Gamma(n)}\left(\frac{n\Gamma(n)}{\Gamma(n+2)}\right)^k = \E R_{\mathcal E}^{2k},
\]
where $R_{\mathcal E}$ stands for the radial distribution of ${\mathcal E}$. The last equality is due to the fact that
\[
\E R_{\mathcal E}^{2k} = \frac{(n+1)^{\frac{n}{2}}}{\Gamma(n)}\int_0^\infty r^{n+2k-1}e^{-\sqrt{n+1}\,r}\,dr = \frac{\Gamma(2k+n)}{\Gamma(n)}(n+1)^{-k}.
\]
Lemma \ref{lem:L_sigma} then concludes the proof.
\end{proof}
\begin{remark}\label{rem:exp-L-L*-comput}
The computation of the moments $\E R_{\mathcal E}^{2k}$ carried out in the above proof lets us also explicitly compute the Laplace transform $\Lambda_{\mathcal E_1}$ of the one-dimensional marginal distribution of $\mathcal E$. Using Lemma \ref{lem:L_sigma} we arrive at
\[
e^{\Lambda_{{\mathcal E}_1}(t)} = \frac{\Gamma\left(\frac{n}{2}\right)}{\Gamma(n)}\sum_{k=0}^\infty \frac{\Gamma(2k+n)}{k!\Gamma\left(k+\frac{n}{2}\right)}\left(\frac{t}{2\sqrt{n+1}}\right)^{2k} = \left(1-\frac{t^2}{n+1}\right)^{-\frac{n+1}{2}},
\]
that is
\begin{equation}\label{eq:Exp-L}
\Lambda_{\mathcal E_1}(t) = -\frac{n+1}{2}\log\left(1-\frac{t^2}{n+1}\right).
\end{equation}
By a straightforward computation then we can check that the Cram\'{e}r transform of $\mathcal E_1$ is equal to
\begin{equation}\label{eq:Exp-L*}
\Lambda_{\mathcal E_1}^\ast(x) = \frac{n+1}{2}\left(\sqrt{\frac{4x^2}{n+1}+1}-1-\log\frac{\sqrt{\frac{4x^2}{n+1}+1}+1}{2}\right).
\end{equation}
\end{remark}

We can provide some improved bounds for the Laplace and Cram\'{e}r transforms of radially symmetric log-concave distributions under certain further convexity assumptions. We consider the following classes of functions
\[
\mathcal{LC}_{conc}:= \{f:(0,\infty)\to\mathbb{R}\,|\,\hbox{ the function } x\mapsto f(\sqrt{x}) \hbox{ is log-concave}\}
\]
and
\[
\mathcal{LC}_{conv}:= \{f:(0,\infty)\to\mathbb{R}\,|\,\hbox{ the function } x\mapsto f(\sqrt{x}) \hbox{ is log-convex}\}.
\]
Note that a Gaussian function $g(x)=e^{-x^2}$ belongs to both of these classes, since $g(\sqrt{x})$ is log-linear. We will provide Gaussian bounds for the moment generating function of a rotationally invariant and log-concave random vector $X$ if its density lies either in $\mathcal{LC}_{conv}$ or $\mathcal{LC}_{conc}$. First, we remark the implications that this additional assumption bears on the convexity properties of the moment generating function. The following fact is a consequence of a result from \cite{Barthe-Kold}.
\begin{lemma}\label{lem:Barthe-Kold-conv}
Let $X$ be a random vector in $\mathbb{R}^n$ which is distributed according to a radially symmetric density $f_X(x)=f_X(|x|)$. If $f_X\in\mathcal{LC}_{conc}$ (resp. $\mathcal{LC}_{conv}$) then $t\mapsto \Lambda_{X_1}(\sqrt{t})$ is concave (resp. convex).
\end{lemma}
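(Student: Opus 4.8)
The plan is to recast the stated concavity/convexity as a monotonicity property of $\tau\mapsto\Lambda_{X_1}'(\tau)/\tau$ and then to establish the latter by a reweighting argument on the tilted measures. Recall from the discussion preceding the lemma that $\Lambda_{X_1}$ is even and of class $C^\infty$ on the interior $(-t^\ast,t^\ast)$ of $\{\Lambda_{X_1}<\infty\}$ (if $t^\ast=0$ there is nothing to prove, so assume $t^\ast>0$). Writing $G(t)=\Lambda_{X_1}(\sqrt t)$ for $t\in(0,(t^\ast)^2)$, the chain rule gives
\[
G''(t)=\frac{1}{4\sqrt t}\,\frac{d}{d\tau}\Big(\frac{\Lambda_{X_1}'(\tau)}{\tau}\Big)\Big|_{\tau=\sqrt t},
\]
so $G$ is concave (resp. convex) on its domain exactly when $\tau\mapsto\Lambda_{X_1}'(\tau)/\tau$ is non-increasing (resp. non-decreasing) on $(0,t^\ast)$.

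Next I would express $\Lambda_{X_1}'(\tau)/\tau$ as a ratio of tilted integrals. Identify the radial density of $X$ with $x\mapsto\phi(|x|^2)$, where $\phi(u):=f_X(\sqrt u)$ is by hypothesis log-concave (resp. log-convex); note that $t^\ast>0$ forces $\int_0^\infty\phi<\infty$. Set $\Psi(u):=\tfrac{1}{2}\int_u^\infty\phi(s)\,ds$, so that $\partial_{x_1}\Psi(|x|^2)=-x_1\phi(|x|^2)$. Differentiating $e^{\Lambda_{X_1}(\tau)}=\int_{\R^n}e^{\tau x_1}\phi(|x|^2)\,dx$ in $\tau$ and integrating by parts in $x_1$ (boundary terms vanish for $\tau\in(0,t^\ast)$ by standard integrability considerations),
\[
\frac{\Lambda_{X_1}'(\tau)}{\tau}=\frac{\int_{\R^n}e^{\tau x_1}\Psi(|x|^2)\,dx}{\int_{\R^n}e^{\tau x_1}\phi(|x|^2)\,dx}=:\frac{A(\tau)}{B(\tau)}.
\]
Consequently $\frac{d}{d\tau}\log\frac{A(\tau)}{B(\tau)}=\E_{\nu_\tau}X_1-\E_{\mu_\tau}X_1$, where $\mu_\tau$ and $\nu_\tau$ are the probability measures on $\R^n$ with densities proportional to $e^{\tau x_1}\phi(|x|^2)$ and $e^{\tau x_1}\Psi(|x|^2)$, respectively.

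The heart of the matter is to compare $\E_{\mu_\tau}X_1$ and $\E_{\nu_\tau}X_1$. Since $d\mu_\tau/d\nu_\tau$ is proportional to $w(|x|^2)$ with $w:=\phi/\Psi=-2(\log\Psi)'$, reweighting gives $\E_{\mu_\tau}X_1-\E_{\nu_\tau}X_1=\mathrm{Cov}_{\nu_\tau}\big(X_1,w(|X|^2)\big)\big/\E_{\nu_\tau}w(|X|^2)$. Conditioning on $|X|=\rho$ under $\nu_\tau$ leaves the uniform measure on $\rho S^{n-1}$ tilted by $e^{\tau x_1}$, whence $\E_{\nu_\tau}[X_1\mid |X|=\rho]=\rho\,\Lambda_{\vartheta_1}'(\tau\rho)$; as $\Lambda_{\vartheta_1}$ is even and convex, $\Lambda_{\vartheta_1}'\gr0$ and $\Lambda_{\vartheta_1}''\gr0$ on $[0,\infty)$, so this conditional mean is non-decreasing in $\rho$. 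Hence, by the law of total covariance (the within-$|X|$ term vanishing because $w(|X|^2)$ is a function of $|X|$) together with the one-dimensional Chebyshev correlation inequality, $\mathrm{Cov}_{\nu_\tau}(X_1,w(|X|^2))\gr0$ when $w$ is non-decreasing and $\ls0$ when $w$ is non-increasing. Since $w=-2(\log\Psi)'$, this is governed by the log-concavity/log-convexity of $\Psi=\tfrac{1}{2}\int_{\cdot}^{\infty}\phi$, which in turn follows from that of $\phi$ — the tail of a log-concave (resp. log-convex) function is log-concave (resp. log-convex), which is where the cited result of Barthe–Koldobsky enters (it can also be checked directly, using that an integrable log-convex $\phi$ on $(0,\infty)$ is automatically non-increasing). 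Thus $\phi$ log-concave yields $w$ non-decreasing, hence $\E_{\mu_\tau}X_1\gr\E_{\nu_\tau}X_1$, hence $\tau\mapsto\Lambda_{X_1}'(\tau)/\tau$ non-increasing, hence $G$ concave; the log-convex case is entirely symmetric.

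The step I expect to require the most care is the last one — the preservation of log-concavity/log-convexity of the radial profile under the tail operation $\phi\mapsto\int_{\cdot}^{\infty}\phi$ — together with justifying the integrability that legitimises the integration by parts on $(0,t^\ast)$; the covariance comparison obtained by conditioning on $|X|$ is the conceptual core but is fairly robust.
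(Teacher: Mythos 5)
Your proof is correct, but it takes a genuinely different route from the paper's. The paper reduces to the one-dimensional marginal $f$ of $f_X$, shows (via Pr\'ekopa--Leindler for $\mathcal{LC}_{conc}$, H\"older for $\mathcal{LC}_{conv}$) that $f$ inherits the same membership in $\mathcal{LC}_{conc}$ or $\mathcal{LC}_{conv}$, and then invokes \cite[Theorem 12]{Barthe-Kold} as a black box to conclude. You instead give a self-contained argument that bypasses the marginal reduction entirely: you translate the claim into monotonicity of $\tau\mapsto\Lambda_{X_1}'(\tau)/\tau$, express that ratio via integration by parts as a ratio of tilted integrals with weights $\phi=f_X(\sqrt{\cdot})$ and its tail $\Psi$, reduce the sign of its logarithmic derivative to $\mathrm{Cov}_{\nu_\tau}(X_1,w(|X|^2))$, and then settle the sign by conditioning on $|X|$ and applying Chebyshev's correlation inequality, using that $\rho\mapsto\rho\,\Lambda_{\vartheta_1}'(\tau\rho)$ is monotone and that the tail of a log-concave (resp.\ log-convex) profile is log-concave (resp.\ log-convex, by H\"older applied to $\Psi(x)=\int_0^\infty\phi(x+y)\,dy$, where one does not actually need $\phi$ to be non-increasing). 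What you buy is a proof that needs nothing beyond these elementary facts and, incidentally, re-derives the relevant case of the Barthe--Koldobsky result rather than quoting it; what the paper buys is brevity and the avoidance of the integrability/boundary-term bookkeeping you flag at the end. One small misstatement: the result of Barthe--Koldobsky that the paper cites is not the log-concavity/log-convexity of tails (which is classical and elementary, as you also note), but precisely the one-dimensional statement that membership of the density in $\mathcal{LC}_{conc}$ or $\mathcal{LC}_{conv}$ forces the corresponding concavity/convexity of $t\mapsto\Lambda(\sqrt t)$ — your argument does not actually rely on it, so this is an attribution slip rather than a gap.
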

\begin{proof}
Denote by $f$ the one-dimensional marginal of $f_X$ and assume first that $f_X\in\mathcal{LC}_{conc}$. The statement of the Lemma will follow immediately by \cite[Theorem 12]{Barthe-Kold} once we show that $f\in\mathcal{LC}_{conc}$. Note that
\[
f(t)=\int_{\mathbb{R}^{n-1}}f_X((t,y))\,dy = \int_{\mathbb{R}^{n-1}}f_X(\sqrt{t^2+|y|^2})\,dy,
\]
so that
\[
f(\sqrt{(1-\lambda)t+\lambda s}) = \int_{\mathbb{R}^{n-1}}f_X(\sqrt{(1-\lambda)t+\lambda s+|y|^2})\,dy
\]
for any $\lambda\in(0,1)$ and $t,s>0$. Define the functions $m,g,h:\mathbb{R}^{n-1}\to (0,\infty)$ by $m(x)=f_X(\sqrt{(1-\lambda)t+\lambda s+|x|^2})$, $g(x)=f_X(\sqrt{t+|x|^2})$ and $h(x)=f_X(\sqrt{s+|x|^2})$ and note that, since $f_X(|x|)$ is a decreasing function of $|x|$, it follows by the convexity of $z\mapsto |z|^2$ that
\begin{align*}
m((1-\lambda)x+\lambda y) &\gr f_X(\sqrt{(1-\lambda)(t+|x|^2)+\lambda(s+|y|^2)})\\
                          &\gr f_X(\sqrt{t+|x|^2})^{1-\lambda}f_X(\sqrt{s+|y|^2})^\lambda = g(x)^{1-\lambda}h(y)^\lambda,
\end{align*}
where we have also used the assumption $f_X\in\mathcal{LC}_{conc}$. It follows then by the Pr\'{e}kopa-Leindler inequality that $f(\sqrt{(1-\lambda)t+\lambda s})\gr f(\sqrt{t})^{1-\lambda}f(\sqrt{s})^\lambda$.

For the case $f_X\in\mathcal{LC}_{conv}$ we can simply argue that
\begin{align*}
f(\sqrt{(1-\lambda)t+\lambda s}) &= \int_{\mathbb{R}^{n-1}}f_X(\sqrt{(1-\lambda)t+\lambda s+|y|^2})\,dy\\
                                 &= \int_{\mathbb{R}^{n-1}}f_X(\sqrt{(1-\lambda)(t+|y|^2)+\lambda (s+|y|^2}))\,dy\\
                                 &\ls \int_{\mathbb{R}^{n-1}}f_X(\sqrt{t+|y|^2})^{1-\lambda}f_X(\sqrt{s+|y|^2})^\lambda\,dy\\
                                 &\ls f(\sqrt{t})^{1-\lambda}f(\sqrt{s})^\lambda,
\end{align*}
by H\"{o}lder's inequality.
\end{proof}
The next fact is straightforward to verify.
\begin{lemma}\label{lem:L-sqrt-convex}
If the function $t\mapsto \Lambda_{X_1}(\sqrt{t})$ is convex (resp. concave) on $(0,+\infty)$ then the function $x\mapsto \Lambda_{X_1}^\ast(\sqrt{x})$ is concave (resp. convex) on $(0,\infty)$.
\end{lemma}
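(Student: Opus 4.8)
The plan is to reduce the statement to a single monotonicity property of $\phi':=\Lambda_{X_1}'$. Throughout write $\phi:=\Lambda_{X_1}$, and recall from the discussion in Section~\ref{sec:preliminaries} that in the present setting $\phi$ is an even $C^\infty$ convex function on $(-t^\ast,t^\ast)$ with $\phi(0)=0$ whose derivative $\phi'$ is a strictly increasing bijection of $(0,t^\ast)$ onto $(0,x^\ast)$, while $\phi^\ast:=\Lambda_{X_1}^\ast$ is $C^\infty$ on $(-x^\ast,x^\ast)$ and satisfies $(\phi^\ast)'=(\phi')^{-1}$. Hence $g(y):=\phi(\sqrt{y})$ and $h(y):=\phi^\ast(\sqrt{y})$ are $C^\infty$ on $(0,(t^\ast)^2)$ and $(0,(x^\ast)^2)$ respectively (and are $+\infty$ beyond), so it suffices to compare the convexity of $g$ on $(0,(t^\ast)^2)$ with that of $h$ on $(0,(x^\ast)^2)$.

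First I would simply differentiate, using the chain rule: for $s\in(0,t^\ast)$ one has $g'(s^{2})=\phi'(s)/(2s)$, and for $x\in(0,x^\ast)$ one has $h'(x^{2})=(\phi^\ast)'(x)/(2x)$. In the second identity I substitute $x=\phi'(t)$, which is an increasing bijection of $(0,t^\ast)$ onto $(0,x^\ast)$; since $(\phi^\ast)'(\phi'(t))=t$, this becomes $h'(\phi'(t)^{2})=t/(2\phi'(t))$. Multiplying the two expressions gives, for every $t\in(0,t^\ast)$,
\[
g'(t^{2})\cdot h'\bigl(\phi'(t)^{2}\bigr)=\frac{\phi'(t)}{2t}\cdot\frac{t}{2\phi'(t)}=\frac14 .
\]
In other words, after the increasing reparametrisations $t\mapsto t^{2}$ and $t\mapsto\phi'(t)^{2}$ of the two domains, the slope of $h$ equals $\tfrac14$ divided by the (strictly positive) slope of $g$.

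The conclusion then drops out. Since $t\mapsto t^{2}$ is an increasing bijection onto $(0,(t^\ast)^2)$, the function $g$ is convex there if and only if $t\mapsto g'(t^{2})$ is non-decreasing; likewise, since $t\mapsto\phi'(t)^{2}$ is an increasing bijection onto $(0,(x^\ast)^2)$, the function $h$ is convex there if and only if $t\mapsto h'(\phi'(t)^{2})$ is non-decreasing. Because $g'(t^{2})>0$, the displayed identity shows that the latter holds exactly when $t\mapsto g'(t^{2})$ is non-increasing, i.e. exactly when $g$ is concave. This yields both implications of the Lemma (indeed the two properties are equivalent).

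I do not expect a genuine obstacle here; this lemma really is routine. The only points that need a word of care are the smoothness of $g$ and $h$ (which is exactly the smoothness of $\phi$ and $\phi^\ast$ recalled above, so that the first-derivative characterisation of convexity applies) and the trivial bookkeeping of the intervals $(0,(t^\ast)^2)$ and $(0,(x^\ast)^2)$ when they are proper subsets of $(0,\infty)$: on the complementary ray the function at hand is identically $+\infty$, and when the hypothesis is the convexity one it is easy to see that necessarily $x^\ast=\infty$ (e.g. if $x^\ast<\infty$ then $\phi(t)\sim x^\ast|t|$, so $g(y)=\phi(\sqrt{y})$ grows like $\sqrt{y}$ and cannot be convex on all of $(0,\infty)$), so that $h$ is then concave on the whole of $(0,\infty)$ as claimed.
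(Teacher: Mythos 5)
The paper does not actually supply a proof of this lemma — it is introduced with the phrase ``The next fact is straightforward to verify'' and no argument is given — so there is nothing of the authors' to compare yours against. Your proof is correct. The key observation, that the slopes satisfy the constant-product identity $g'(t^{2})\,h'\bigl(\phi'(t)^{2}\bigr)=\tfrac14$ (coming straight from $(\Lambda_{X_1}^\ast)'=(\Lambda_{X_1}')^{-1}$), is a clean way to package the Legendre duality and immediately converts the monotonicity of one reparametrised slope into the opposite monotonicity of the other, giving both implications at once and avoiding second derivatives entirely. For comparison, the route that the surrounding text implicitly suggests (see Corollary~\ref{cor:L-sqrt-convex}, where the second derivative of $\Lambda^\ast_{X_1}(\sqrt{\cdot})$ is computed explicitly) is to note that $h$ is convex iff $x(\Lambda_{X_1}^\ast)''(x)\gr(\Lambda_{X_1}^\ast)'(x)$, then substitute $x=\Lambda_{X_1}'(t)$, $(\Lambda_{X_1}^\ast)'(x)=t$, $(\Lambda_{X_1}^\ast)''(x)=1/\Lambda_{X_1}''(t)$ to reduce this to $\Lambda_{X_1}'(t)\gr t\,\Lambda_{X_1}''(t)$, which is exactly $g$ concave; that is the same content, just written in second-derivative form. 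Your handling of the domain bookkeeping is also right: concavity of $g$ on all of $(0,\infty)$ forces $t^\ast=\infty$ (an extended-real concave function that is finite somewhere cannot jump to $+\infty$), while $h$ being $+\infty$ beyond $(x^\ast)^2$ is compatible with convexity; and in the other direction your observation that $g$ convex on $(0,\infty)$ forces $x^\ast=\infty$ (since otherwise $g(y)\sim x^\ast\sqrt{y}$, which is concave at infinity) correctly ensures $h$ is finite and the concavity conclusion holds on all of $(0,\infty)$.
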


\begin{proof}
We sketch the argument for the case that $t\mapsto \Lambda_{X_1}(\sqrt{t})$ is convex (for the concave case, only the ensuing inequalities are reversed). If $f:(0,\infty)\to(0,\infty)$ defined by $f(t)=\Lambda_{X_1}(\sqrt{t})$ is convex, then
\[
0\ls f''(t) = \frac{\Lambda_{X_1}''(\sqrt{t})-\frac{\Lambda_{X_1}'(\sqrt{t})}{\sqrt{t}}}{2t}
\]
holds for every $t>0$, hence $\Lambda_{X_1}''(s)-\frac{\Lambda_{X_1}'(s)}{s}\gr 0$ for all $s>0$. Now let $x>0$. For $y=\sqrt{x}$ we consider $t_y=(\Lambda^\ast_{X_1})'(y)$. Then $y=\Lambda_{X_1}'(t_y)$, and $\Lambda_{X_1}''(t_y)-\frac{\Lambda_{X_1}'(t_y)}{t_y}\gr 0$ is equivalent to $(\Lambda^\ast_{X_1})''(y)-\frac{(\Lambda^\ast_{X_1})'(y)}{y}\ls 0$ (recall that $(\Lambda^{\ast})''=1/(\Lambda''\circ(\Lambda^{\ast})')$, since $(\Lambda^{\ast})'=\left(\Lambda'\right)^{-1}$). This shows that
\[
\frac{d^2}{dx^2}\Lambda_{X_1}^\ast(\sqrt{x})=\frac{(\Lambda^\ast_{X_1})''(\sqrt{x})-\frac{(\Lambda^\ast_{X_1})'(\sqrt{x})}{\sqrt{x}}}{2x}\ls 0
\]
for the arbitrary $x\in(0,\infty)$, which proves that $x\mapsto \Lambda_{X_1}^\ast(\sqrt{x})$ is concave on $(0,\infty)$.
\end{proof}

Let us record for future use the following direct corollary of Lemma \ref{lem:Barthe-Kold-conv} and Lemma \ref{lem:L-sqrt-convex}.
\begin{corollary}\label{cor:L-sqrt-convex}
Let $X$ be a rotationally invariant random vector in $\mathbb{R}^n$ with density $f_X(x)=f_X(|x|)$. If $f_X\in \mathcal{LC}_{conc}$, then $x(\Lambda_{X_1}^\ast )''(x)-(\Lambda_{X_1}^\ast) '(x)\gr 0$ for every $x>0$. The inequality is reversed if $f_X\in \mathcal{LC}_{conv}$.
\end{corollary}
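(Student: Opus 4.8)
The plan is to chain together the two preceding lemmas and then convert the resulting convexity statement into the stated differential inequality by an elementary change of variables.

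\textbf{Step 1 (apply the lemmas).} Suppose first that $f_X\in\mathcal{LC}_{conc}$. By Lemma \ref{lem:Barthe-Kold-conv}, the function $t\mapsto\Lambda_{X_1}(\sqrt{t})$ is concave on $(0,\infty)$; feeding this into Lemma \ref{lem:L-sqrt-convex} gives that the function $g(x):=\Lambda_{X_1}^\ast(\sqrt{x})$ is convex on $(0,\infty)$. Since $X_1$ is log-concave, $\Lambda_{X_1}^\ast$ is smooth on the interior of its support (as recalled in Section \ref{sec:preliminaries}), so on that range $g$ is twice differentiable and its convexity is equivalent to $g''\gr 0$.

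\textbf{Step 2 (chain rule).} Writing $h:=\Lambda_{X_1}^\ast$, a direct computation gives $g'(x)=h'(\sqrt{x})/(2\sqrt{x})$ and hence
\[
g''(x)=\frac{1}{4x}\left(h''(\sqrt{x})-\frac{h'(\sqrt{x})}{\sqrt{x}}\right)=\frac{1}{4x^{3/2}}\left(\sqrt{x}\,h''(\sqrt{x})-h'(\sqrt{x})\right).
\]
As $x^{3/2}>0$, the condition $g''(x)\gr 0$ for all $x>0$ is equivalent (after the substitution $y=\sqrt{x}$, a bijection of $(0,\infty)$ onto itself) to $y\,h''(y)-h'(y)\gr 0$ for all $y>0$, i.e. to $x(\Lambda_{X_1}^\ast)''(x)-(\Lambda_{X_1}^\ast)'(x)\gr 0$. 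This proves the first assertion.

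\textbf{Step 3 (the log-convex case).} If instead $f_X\in\mathcal{LC}_{conv}$, the same argument applies verbatim with every ``concave''/``convex'' and every inequality reversed: Lemma \ref{lem:Barthe-Kold-conv} makes $t\mapsto\Lambda_{X_1}(\sqrt{t})$ convex, Lemma \ref{lem:L-sqrt-convex} makes $g$ concave, and the identity for $g''$ above then forces $x(\Lambda_{X_1}^\ast)''(x)-(\Lambda_{X_1}^\ast)'(x)\ls 0$.

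There is essentially no obstacle here; the only point deserving a word of care is the legitimacy of differentiating and reparametrizing, which is covered by the smoothness of $\Lambda_{X_1}^\ast$ on the interior of $\mathrm{supp}(X_1)$ recalled in Section \ref{sec:preliminaries} (so the inequality is to be read as holding on that interval). I would also note in passing that the displayed formula for $g''$ shows each implication can be reversed, so the differential inequality is in fact equivalent, through the two lemmas, to $f_X\in\mathcal{LC}_{conc}$ (resp. $\mathcal{LC}_{conv}$); only the stated direction is needed in the sequel.
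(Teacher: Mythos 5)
Your proof is correct and follows exactly the same route as the paper: Lemma \ref{lem:Barthe-Kold-conv} gives concavity/convexity of $t\mapsto\Lambda_{X_1}(\sqrt{t})$, Lemma \ref{lem:L-sqrt-convex} transfers this to convexity/concavity of $x\mapsto\Lambda_{X_1}^\ast(\sqrt{x})$, and the chain-rule identity for $g''$ you write is precisely the displayed formula in the paper's proof. Your extra remarks about smoothness and reversibility of the implication are correct but not needed; the argument is otherwise identical.
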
 
\begin{proof}
By Lemma \ref{lem:Barthe-Kold-conv}, $f_X\in \mathcal{LC}_{conc}$ implies that $t\mapsto \Lambda_{X_1}(\sqrt{t})$ is concave, which in turn grants us that $y\mapsto \Lambda_{X_1}^\ast(\sqrt{y})$ is convex on $(0,\infty)$ (Lemma \ref{lem:L-sqrt-convex}). Differentiating twice, it follows that
\[
\frac{1}{4y}\left((\Lambda^{\ast}_{X_1})''(\sqrt{y})-\frac{(\Lambda^{\ast}_{X_1})'(\sqrt{y})}{\sqrt{y}}\right) \gr 0
\]
for every $y>0$, which is equivalent to $x(\Lambda_{X_1}^\ast )''(x)-(\Lambda_{X_1}^\ast) '(x)\gr 0$ for all $x>0$.
\end{proof}

Our primary goal in the rest of this section is the following result.
\begin{theorem}\label{thm:L^ast-maj-gaussian}
If $X$ is an isotropic random vector with a radially symmetric density $f_X$ such that $f_X\in\mathcal{LC}_{conc}$ and $G$ is a standard Gaussian random vector in $\mathbb{R}^n$, then $\Lambda_{X_1}\ls \Lambda_{G_1}$ and, as a consequence, $\Lambda_{X_1}^\ast\gr\Lambda_{G_1}^\ast$. If $f_X\in\mathcal{LC}_{conv}$, the reverse inequalities hold.
\end{theorem}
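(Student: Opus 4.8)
The plan is to reduce the statement, via the substitution $t\mapsto\sqrt t$, to a one-variable comparison between a one-sided convex (or concave) function and the linear function it agrees with to first order at the origin. Since $G$ is a standard Gaussian vector, $\Lambda_{G_1}(t)=t^2/2$, so $\Lambda_{G_1}(\sqrt t)=t/2$ and $\Lambda_{G_1}^\ast(x)=x^2/2$. Set $H(t):=\Lambda_{X_1}(\sqrt t)$ on $\{t\gr 0:\Lambda_{X_1}(\sqrt t)<\infty\}$. By Lemma \ref{lem:Barthe-Kold-conv}, the hypothesis $f_X\in\mathcal{LC}_{conc}$ makes $H$ concave, while $f_X\in\mathcal{LC}_{conv}$ makes it convex (equivalently, via Lemma \ref{lem:L-sqrt-convex} or Corollary \ref{cor:L-sqrt-convex}, one could work with $x\mapsto\Lambda_{X_1}^\ast(\sqrt x)$, whose convexity/concavity is already recorded). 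The entire point is that $H$ and the line $t\mapsto t/2$ share both their value and their right-derivative at $0$, so a one-sided convexity comparison fixes the sign of $H(t)-t/2$ on the whole domain.

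Next I would pin down the first-order data at the origin. Since $X$ is log-concave, $X_1=\langle X,e_1\rangle$ is log-concave, so $\Lambda_{X_1}$ is finite, indeed real-analytic, on a neighbourhood of $0$, its Taylor coefficients being the cumulants of $X_1$; the isotropicity of $X$ then gives $\Lambda_{X_1}'(0)=\E X_1=0$ and $\Lambda_{X_1}''(0)=\Var(X_1)=1$, so $\Lambda_{X_1}(s)=s^2/2+O(s^3)$ near $0$. Consequently $H$ is continuous at $0$ with $H(0)=0$, and $H'(0^+)=\lim_{t\to0^+}\Lambda_{X_1}'(\sqrt t)/(2\sqrt t)=1/2$.

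If $f_X\in\mathcal{LC}_{conc}$, then since $H$ is concave with $H(0)=0$, the chord slope $t\mapsto H(t)/t$ is non-increasing on the domain, with limit $H'(0^+)=1/2$ as $t\to0^+$; hence $H(t)\ls t/2$ wherever it is defined. One checks moreover that $H$ is defined on all of $[0,\infty)$: $f_X\in\mathcal{LC}_{conc}$ forces $f_X$ either to have compact support or to decay at least as fast as a Gaussian (the convex exponent in $|x|^2$ must tend to $+\infty$ for integrability, hence eventually dominates a line of positive slope), so $\E e^{t|X|}<\infty$ for every $t$ and $\Lambda_{X_1}$ is finite on $\mathbb R$. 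Therefore $\Lambda_{X_1}(s)\ls s^2/2=\Lambda_{G_1}(s)$ for all $s$. If instead $f_X\in\mathcal{LC}_{conv}$, then $H$ is convex, so $t\mapsto H(t)/t$ is non-decreasing with the same limit $1/2$ at $0$, giving $H(t)\gr t/2$ on the domain of $H$, while for $t$ outside that domain $H(t)=+\infty\gr t/2$ trivially; hence $\Lambda_{X_1}(s)\gr s^2/2=\Lambda_{G_1}(s)$ for all $s$. In both cases the dual inequalities $\Lambda_{X_1}^\ast\gr\Lambda_{G_1}^\ast$, respectively $\Lambda_{X_1}^\ast\ls\Lambda_{G_1}^\ast$, follow at once from the order-reversing property of the Legendre transform.

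The calculus is routine once Lemma \ref{lem:Barthe-Kold-conv} is in hand; the two points needing care are (i) the identity $H'(0^+)=1/2$, which is exactly where the isotropicity hypothesis enters and is what makes the comparison with the \emph{standard} Gaussian, as opposed to one of some other variance, the correct one, and (ii) in the $\mathcal{LC}_{conc}$ case, the verification that $\Lambda_{X_1}$ is finite on all of $\mathbb R$ so that the pointwise bound is not vacuous. For the $\mathcal{LC}_{conc}$ case one can alternatively bypass this last discussion by imitating the proof of Proposition \ref{prop:Exp-min}: after the substitution $u=r^2$, Lemma \ref{lem:Borell} applied to $u\mapsto f_X(\sqrt u)$ shows the associated function is log-concave, the constraints $\E R_X^0=1$ and $\E R_X^2=n$ fix two of its consecutive values, log-concavity then yields $\E R_X^{2k}\ls\E R_G^{2k}$ for every $k$, and Lemma \ref{lem:L_sigma} upgrades this to $e^{\Lambda_{X_1}(t)}\ls e^{\Lambda_{G_1}(t)}$ term by term. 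This route, however, does not seem to adapt to the $\mathcal{LC}_{conv}$ case, where the $\sqrt{\cdot}$-convexity of $\Lambda_{X_1}$ furnished by Lemma \ref{lem:Barthe-Kold-conv} appears genuinely necessary.
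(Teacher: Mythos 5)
Your proof is correct, and it is genuinely different from the paper's. The paper decomposes $X\overset{d}{=}R\vartheta$, establishes the convex-order dominance $|X|^2\prec|G|^2$ (resp.\ its reverse) via the intersecting-densities argument of Lemma~\ref{lem:stoc-comparison}, and then needs to verify, via the Bessel-function identity of Lemma~\ref{lem:spher-deriv-bounds}, that $x\mapsto\E e^{t\sqrt{x}\vartheta_1}$ is convex so that the convex order transfers to the moment generating function. You instead extract the whole statement directly from Lemma~\ref{lem:Barthe-Kold-conv}: once $H(t)=\Lambda_{X_1}(\sqrt t)$ is known to be concave (resp.\ convex), and isotropicity pins down $H(0)=0$, $H'(0^+)=\tfrac12$, the monotonicity of the chord slope $H(t)/t$ immediately gives $H(t)\ls t/2$ (resp.\ $\gr t/2$), i.e.\ the comparison with $\Lambda_{G_1}(\sqrt t)=t/2$. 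This sidesteps the stochastic dominance machinery and the Bessel computation entirely, at the price of making Lemma~\ref{lem:Barthe-Kold-conv} do all the work --- though that lemma is already in the paper, and your route makes more transparent exactly where isotropicity enters (namely, in forcing the tangent line at the origin to be the Gaussian one). Your care about finiteness is appropriate: in the $\mathcal{LC}_{conc}$ case you correctly note that the concavity of $\log f_X(\sqrt\cdot)$ together with integrability forces either compact support or Gaussian decay, so $\Lambda_{X_1}<\infty$ on all of $\mathbb R$; in the $\mathcal{LC}_{conv}$ case the inequality is trivial outside the domain of finiteness. Your closing observation --- that the Borell/moment-majorisation route of Proposition~\ref{prop:Exp-min} would reprove the $\mathcal{LC}_{conc}$ half but does not adapt to $\mathcal{LC}_{conv}$ --- is also accurate and explains why the \cite{Barthe-Kold} convexity input is the more versatile tool here.

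Two small remarks. First, as in the rest of Section~\ref{sec:rot-inv}, log-concavity of $X$ is implicitly assumed in this theorem (it is used in the paper's proof via Lemma~\ref{lem:stoc-comparison} and you use it to guarantee finiteness of $\Lambda_{X_1}$ near $0$ in the $\mathcal{LC}_{conv}$ case), so invoking it is fine, but it would be cleaner to state it explicitly since membership in $\mathcal{LC}_{conc}$ or $\mathcal{LC}_{conv}$ alone does not imply log-concavity of $f_X$. Second, strictly speaking the chord-slope monotonicity yields $H(t)/t\ls 1/2$ by taking $t\downarrow 0$ along the domain, so one should note that the domain of $H$ is an interval with left endpoint $0$ (which is where log-concavity of $X_1$ again enters in the $\mathcal{LC}_{conv}$ case); you have this, but it is worth keeping visibly on record.
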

For the proof, we recall the notion of (second-order) stochastic dominance. If $X,Y : (\Omega,\P)\to \mathbb{R}$ are real random variables in $L_1(\Omega,\P)$, we say that $Y$ stochastically dominates $X$ (or is larger than $X$ in the convex order), and write $X\prec Y$, if
\[
\E X=\E Y \qquad \hbox{ and } \int_x^\infty \P(X>t)\,dt\ls \int_x^\infty \P(Y>t)\,dt \hbox{ for every } x\in\mathbb{R}.
\]
The following Lemma can be found in \cite{Konig-Kwapien} (see also \cite{Mar-Ol}).
\begin{lemma}\label{lem:majorisation}
Suppose that $X,Y : (\Omega,\P)\to \mathbb{R}$ are real random variables in $L_1(\Omega,\P)$.
\vspace{-10pt}
\begin{itemize}
\item[(a)] $X\prec Y$ if and only if $\E\phi(X)\ls \E\phi(Y)$ for every convex function $\phi:\mathbb{R}\to\mathbb{R}_+$ such that $\phi(X)\in L_1(\Omega,\P)$.
\item[(b)] If $X=c$ is a constant random variable equal to $\E Y$, then $c\prec Y$.
\item[(c)] If $\E X=\E Y$ and there are $a,b\in \mathbb{R}$, $a<b$, such that the densities $f_X, f_Y$ satisfy $f_Y-f_X\gr 0$ on $(-\infty,a)\cup(b,\infty)$ and $f_Y-f_X\ls 0$ on $(a,b)$, then $X\prec Y$.
\end{itemize}
\end{lemma}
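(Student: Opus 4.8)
The plan is to prove part (a) from scratch and then obtain (b) and (c) as short consequences. The elementary fact powering (a) is the identity $\int_x^\infty \P(Z>t)\,dt=\E(Z-x)^+$, valid for every $Z\in L_1$ and $x\in\R$ (Tonelli, since $\P((Z-x)^+>s)=\P(Z>x+s)$), together with its dual $\E(x-Z)^+=x-\E Z+\E(Z-x)^+$ coming from $(x-Z)^+=(x-Z)+(Z-x)^+$. Thus $X\prec Y$ amounts precisely to $\E X=\E Y$ and $\E(X-x)^+\ls\E(Y-x)^+$ for all $x$; and, once $\E X=\E Y$ is known, this is equivalent to $\E(x-X)^+\ls\E(x-Y)^+$ for all $x$. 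For the ``if'' direction of (a), I feed the non-negative convex functions $z\mapsto(z-x)^+$ and $z\mapsto(x-z)^+$ into the hypothesis: the first yields the tail inequality via the identity above, while combining the two and letting $x\to+\infty$ and $x\to-\infty$ (dominated convergence, using $(Z-x)^+\ls Z^+$ for $x\gr0$ and $(x-Z)^+\ls Z^-$ for $x\ls0$) forces $\E X=\E Y$. Hence $X\prec Y$.

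For the ``only if'' direction of (a), fix a convex $\phi$ with $\phi(X)\in L_1$. Subtracting from $\phi$ its supporting line at an arbitrary point $z_0$ keeps $\phi(X)\in L_1$ and, because $\E X=\E Y$, alters both sides of the desired inequality $\E\phi(X)\ls\E\phi(Y)$ by the same amount; so I may assume $\phi\gr0$ with $\phi(z_0)=\phi'(z_0)=0$. Let $\nu\gr0$ be the second-derivative (Stieltjes) measure of $\phi$. Applying Fubini to $\phi(z)=\pm\int_{z_0}^{z}\phi'(s)\,ds$ gives the two-sided representation
\[
\phi(z)=\int_{[z_0,\infty)}(z-u)^+\,d\nu(u)+\int_{(-\infty,z_0)}(u-z)^+\,d\nu(u),
\]
valid for all $z$. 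Taking expectations and interchanging with the $\nu$-integral (Tonelli: after the normalisation every integrand is non-negative), it suffices to check $\E(X-u)^+\ls\E(Y-u)^+$ for $u\gr z_0$ and $\E(u-X)^+\ls\E(u-Y)^+$ for $u<z_0$; the former is the tail inequality and the latter follows from it by $\E(u-Z)^+=u-\E Z+\E(Z-u)^+$ together with $\E X=\E Y$. Integrating against $\nu\gr0$ gives $\E\phi(X)\ls\E\phi(Y)$.

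Part (b) is then immediate: for convex $\phi$, Jensen gives $\phi(c)=\phi(\E Y)\ls\E\phi(Y)$, and $c=\E Y=\E X$ when $X\equiv c$, so (a) applies. For part (c), set $g=f_Y-f_X$, so that $\int g=0$ and $\int z\,g(z)\,dz=\E Y-\E X=0$, while $g\gr0$ off $(a,b)$ and $g\ls0$ on $(a,b)$. Given a non-negative convex $\phi$ with $\phi(X)\in L_1$, let $\ell$ be the secant line through $(a,\phi(a))$ and $(b,\phi(b))$; convexity yields $\phi\ls\ell$ on $[a,b]$ and $\phi\gr\ell$ off $[a,b]$, hence $(\phi-\ell)g\gr0$ pointwise. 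Since $\ell$ is affine and $\int g=\int z\,g=0$ we get $\int\ell\,g=0$, so $\E\phi(Y)-\E\phi(X)=\int\phi\,g=\int(\phi-\ell)\,g\gr0$ (the case $\E\phi(Y)=\infty$ being trivial); invoking (a) concludes that $X\prec Y$.

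The only genuinely delicate point is the representation step in (a): one must verify the two-sided integral formula for a general convex $\phi$ — using that $\phi'$ is non-decreasing so that the measure $\nu$ exists, and Fubini on $\int_{z_0}^z\phi'$ — and confirm that every interchange of expectation with the $\nu$-integral is legitimate, which it is because after the normalisation $\phi\gr0$ all integrands are non-negative and Tonelli applies. Everything else is routine bookkeeping with integrability.
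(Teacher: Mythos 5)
Your proposal is correct, but note that the paper does not prove this lemma at all: it is quoted from the literature (the references [K\"onig--Kwapie\'n] and [Marshall--Olkin]), so what you have supplied is a self-contained proof of a cited fact rather than an alternative to an argument in the text. Your route is the standard one and it is sound: the identity $\int_x^\infty\P(Z>t)\,dt=\E(Z-x)^+$ converts the definition of $\prec$ into the family of ``stop-loss'' inequalities $\E(X-x)^+\ls\E(Y-x)^+$; testing the hypothesis of (a) on the two kernels $(z-x)^+$ and $(x-z)^+$ and letting $x\to\pm\infty$ recovers $\E X=\E Y$ and hence one implication, while the converse follows from the representation of a normalised convex function as a nonnegative mixture of these kernels against the second-derivative measure $\nu$, with Tonelli justifying the interchange. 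Part (b) via Jensen and part (c) via the secant line $\ell$ through $(a,\phi(a))$, $(b,\phi(b))$ together with $\int(f_Y-f_X)=\int z(f_Y-f_X)\,dz=0$ and the pointwise sign $(\phi-\ell)(f_Y-f_X)\gr0$ are exactly the classical ``cut criterion'' argument of Karlin--Novikoff type, and both are carried out correctly (including the observation that $\E\phi(Y)=\infty$ is trivial). The only place deserving one extra sentence is the representation step: with the first integral taken over $[z_0,\infty)$ your formula is exact only if the supporting slope you subtract is the left derivative $\phi'_-(z_0)$ (or if $z_0$ is a point of differentiability, which you may always arrange since a convex function on $\mathbb{R}$ is differentiable off a countable set); otherwise an atom of $\nu$ at $z_0$ is double-counted on one side. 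This is a cosmetic fix, not a gap, and with it your proof is complete and fully compatible with how the lemma is used in the paper.
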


It is immediate by (b) above that if $X\overset{d}{=}R\vartheta$ is radially symmetric and isotropic then $n\prec R^2$, or equivalently, $|\sqrt{n}\vartheta|^2\prec |X|^2$. It turns out that extra convexity assumptions let us provide Gaussian bounds on $|X|$.
\begin{lemma}\label{lem:stoc-comparison}
Let $X$ be an isotropic log-concave rotationally invariant random vector in $\mathbb{R}^n$. If the density $f_X$ of $X$ satisfies $f_X\in\mathcal{LC}_{conc}$ then $|X|^2\prec |G|^2$. Respectively, if $f_X\in\mathcal{LC}_{conv}$ then $|G|^2\prec |X|^2$.
\end{lemma}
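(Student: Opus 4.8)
The plan is to reduce the comparison of the radial distributions $|X|^2$ and $|G|^2$ to a statement about densities, so that part (c) of Lemma \ref{lem:majorisation} can be applied. First I would record that, since $X$ is isotropic and rotationally invariant, $\E|X|^2 = n = \E|G|^2$, so the first requirement in the definition of $\prec$ is automatic. Writing $R = |X|$ with density $f_R(r) = n\omega_n r^{n-1} f_X(r)$ on $(0,\infty)$, the density of $|X|^2$ is obtained by the change of variables $u = r^2$, giving $g_X(u) = \tfrac{1}{2}n\omega_n u^{(n-2)/2} f_X(\sqrt{u})$; the corresponding density of $|G|^2$ (a $\chi^2_n$ variable) is $g_G(u) = c_n' u^{(n-2)/2} e^{-u/2}$. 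Thus the difference $g_G - g_X$ has the same sign as $u \mapsto c e^{-u/2} - f_X(\sqrt u)$ for the appropriate positive constant $c$, i.e.\ as $\log c - \tfrac u2 - \log f_X(\sqrt u)$.

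The key point is then a sign-change count. Under the hypothesis $f_X \in \mathcal{LC}_{conc}$, the function $\varphi(u) := \log f_X(\sqrt u)$ is concave on $(0,\infty)$, while $u \mapsto \log c - u/2$ is linear; hence $\psi(u) := \log c - u/2 - \varphi(u)$ is convex, so it changes sign at most twice, and since both densities are probability densities with the same mean it must change sign \emph{exactly} twice, from $+$ to $-$ to $+$ (a convex function that is positive, then negative, then positive). That is precisely the hypothesis of Lemma \ref{lem:majorisation}(c) with the roles $f_Y = g_G$, $f_X = g_X$, so $|X|^2 \prec |G|^2$. In the case $f_X \in \mathcal{LC}_{conv}$, the function $\varphi$ is convex, so $\psi$ is concave and changes sign from $-$ to $+$ to $-$; equivalently $g_X - g_G$ is positive off a bounded interval and negative on it, which by the same lemma gives $|G|^2 \prec |X|^2$.

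The main obstacle is making the ``exactly two sign changes'' step rigorous rather than heuristic: one must rule out the degenerate possibilities (no sign change, or a single sign change), which would contradict either $\int g_X = \int g_G = 1$ or $\int u\,g_X\,du = \int u\,g_G\,du = n$. Concretely, if $\psi$ (convex) had at most one sign change then $g_G - g_X$ would be of one sign, or of one sign on each of two complementary half-lines, and in either case integrating against the constant $1$ and against $u$ and using that both functions share mass and mean forces a contradiction — this is the standard ``two distributions with equal mass and mean whose densities cross'' argument, and it needs the mild regularity that $f_X$ is, say, continuous and not identically a Gaussian (the Gaussian case being trivial since then $g_X = g_G$). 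I would also note at the outset that $f_X$ being log-concave and radially decreasing guarantees the relevant integrals are finite and the densities are well defined, so that Lemma \ref{lem:majorisation} applies without further fuss. Combined with Lemma \ref{lem:majorisation}(a) applied to the convex functions arising from the Bessel series in Lemma \ref{lem:L_sigma} (namely $u \mapsto u^k$), this comparison of $|X|^2$ with $|G|^2$ is exactly what will yield Theorem \ref{thm:L^ast-maj-gaussian}.
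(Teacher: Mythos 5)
Your proposal is correct and follows essentially the same route as the paper: pass to the densities of $|X|^2$ and $|G|^2$, observe that the log of their ratio is concave (resp.\ convex) under the hypothesis $f_X\in\mathcal{LC}_{conc}$ (resp.\ $\mathcal{LC}_{conv}$), use the shared mass and shared second moment to force exactly two sign changes, read off the signature, and conclude via Lemma~\ref{lem:majorisation}(c). The paper writes the difference as $g_X-g_G$ with signature $-,+,-$ while you write $g_G-g_X$ with signature $+,-,+$, which is the same statement; your explicit remark that the degenerate cases (zero or one sign change) are excluded by integrating against $1$ and against $u-u_0$ fills in a step the paper states without elaboration.
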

\begin{proof}
If we write $f,g$ for the densities of the random variables $|X|^2,|G|^2$ respectively, we have $f(r)=\frac{1}{2}n\omega_nr^{\frac{n}{2}-1}f_X(\sqrt{r})$ and $g(r)=\frac{1}{2}n\omega_n(2\pi)^{-\frac{n}{2}}r^{\frac{n}{2}-1}e^{-\frac{r}{2}}$. The constraints
\[
\int_0^\infty f(r)-g(r)\,dr=0 \qquad\hbox{ and }\qquad \int_0^\infty r(f(r)-g(r))\,dr=0,
\]
imposed by isotropicity, grant us that $f-g$ has at least two roots $0<x_1<x_2$ on $(0,\infty)$. To verify that $x_1,x_2$ are the only roots on $(0,\infty)$, we examine the sign of $\log f(r)-\log g(r)=\frac{n}{2}\log(2\pi)+\log f_X(\sqrt{r})+\frac{r}{2}$. Recall that the function $r\mapsto\log f_X(\sqrt{r})$ is non-increasing and concave (resp. convex). It follows that it crosses the graph of the affine function $-\frac{n}{2}\log(2\pi)-\frac{r}{2}$ at most twice. Moreover, if $r\mapsto\log f_X(\sqrt{r})$ is concave then the signature of $\log f-\log g$, and therefore $f-g$, must be $-,+,-$. If it is convex, the respective sign changes are $+,-,+$.
\end{proof}
The last ingredient we will need for the proof of Theorem \ref{thm:L^ast-maj-gaussian} is a differential inequality for the moment generating function of the one-dimensional marginals of the spherical distribution.
\begin{lemma}\label{lem:spher-deriv-bounds}
Let $\phi(t)=\E e^{t\vartheta_1}$. Then $\phi'(t)\ls t\phi''(t)$, for every $t>0$.
\end{lemma}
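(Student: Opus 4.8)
The plan is to exploit the explicit power series for $\phi$. Specialising Lemma~\ref{lem:L_sigma} to $X=\vartheta$ (for which the radial part is the constant $1$, so $\E R_X^{2k}=1$), or equivalently combining \eqref{eq:Lambda_theta1} with the series \eqref{eq:bessel-series}, we have
\[
\phi(t)=e^{\Lambda_{\vartheta_1}(t)}=\Gamma\!\left(\tfrac n2\right)\sum_{k=0}^\infty\frac{1}{k!\,\Gamma(k+n/2)}\left(\frac t2\right)^{2k}=\sum_{k=0}^\infty a_k\,t^{2k},\qquad a_k:=\frac{\Gamma(n/2)}{4^k\,k!\,\Gamma(k+n/2)}>0.
\]
This series converges on all of $\mathbb{R}$, so $\phi$ is analytic there and may be differentiated termwise for every $t$.

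The rest is a one-line computation. Differentiating termwise,
\[
\phi'(t)=\sum_{k\gr 1}2k\,a_k\,t^{2k-1},\qquad \phi''(t)=\sum_{k\gr 1}2k(2k-1)\,a_k\,t^{2k-2},
\]
so that for every $t>0$,
\[
t\phi''(t)-\phi'(t)=\sum_{k\gr 1}\bigl(2k(2k-1)-2k\bigr)a_k\,t^{2k-1}=\sum_{k\gr 1}4k(k-1)\,a_k\,t^{2k-1}\gr 0,
\]
since $a_k>0$ and $k(k-1)\gr 0$ for every integer $k\gr 1$ (the $k=1$ term simply vanishes). This is precisely the asserted inequality $\phi'(t)\ls t\phi''(t)$.

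There is no genuine obstacle here; the only points to check are the legitimacy of termwise differentiation (immediate, as the Bessel-type series converges on all of $\mathbb{R}$) and the coefficient bookkeeping. For the record, one may also avoid the series and argue from the second-order ODE satisfied by $\phi$: a short integration-by-parts argument in the integral representation \eqref{eq:Lambda_theta1} shows that $t\phi''(t)+(n-1)\phi'(t)-t\phi(t)=0$, hence $t\phi''(t)-\phi'(t)=t\phi(t)-n\phi'(t)$, and $t\phi(t)-n\phi'(t)=\sum_{k\gr 1}(a_{k-1}-2nk\,a_k)\,t^{2k-1}=\sum_{k\gr 1}4k(k-1)\,a_k\,t^{2k-1}\gr 0$ on using the recursion $a_{k-1}=2k(2k-2+n)\,a_k$. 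Either way the statement reduces to positivity of the coefficients $a_k$ together with the trivial inequality $k(k-1)\gr 0$.
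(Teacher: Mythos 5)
Your proof is correct, and your computation is right: $2k(2k-1)-2k=4k(k-1)\gr 0$, so every coefficient in the series for $t\phi''(t)-\phi'(t)$ is non-negative (the $k=1$ term vanishing), with a strictly positive coefficient at $k=2$ giving the desired inequality for $t>0$. Your secondary ODE argument is also sound: the claimed equation $t\phi''+(n-1)\phi'-t\phi=0$ follows from the modified Bessel equation after the change of variables implicit in \eqref{eq:Lambda_theta1}, and the recursion $a_{k-1}=2k(2k-2+n)a_k$ checks out.

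Your route differs from the paper's. The paper argues via the Bessel recurrences \eqref{eq:bessel-recurrence}: setting $h_s(x)=(x/2)^{-n/2+1-s}I_{n/2-1+s}(x)$, one has $h_s'=(x/2)h_{s+1}$, which yields the clean identity $x\phi''(x)-\phi'(x)=\Gamma(n/2)\,x\,(x/2)^2 h_2(x)$, manifestly positive because $h_2>0$. Your power-series version is the expansion of exactly that identity in disguise — $(x/2)^2 h_2(x)$ is, up to a constant, $\sum_{k\gr 1}4k(k-1)a_k\,x^{2k-2}$ — but it is more elementary, requiring nothing beyond positivity of the Taylor coefficients $a_k$ and no Bessel-specific identities. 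The paper's version is more compact and reusable (the $h_s$ hierarchy and the relation $h_s'=(x/2)h_{s+1}$ generalize readily to higher-order differential inequalities for $\phi$), while yours is self-contained given only Lemma~\ref{lem:L_sigma}. Both are perfectly valid; nothing is missing from your argument.
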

Given Lemma \ref{lem:spher-deriv-bounds}, we can conclude the proof of Theorem \ref{thm:L^ast-maj-gaussian}.
\begin{proof}[Proof of Theorem \ref{thm:L^ast-maj-gaussian}]
For every $t>0$ let $\phi_t(x)=\E e^{t\sqrt{x}\vartheta_1}$. Note that $e^{\Lambda_{X_1}(t)}=\E\phi_t(|X|^2)$ and $e^{\Lambda_{G_1}(t)}=\E\phi_t(|G|^2)$, so in view of Lemma \ref{lem:majorisation}(a) it is enough to verify that $\phi_t$ is convex on $(0,\infty)$. Differentiating twice, we see that
\[
\frac{d^2\phi_t(x)}{dx^2} = \frac{t^2}{4x}\left(\phi''(t\sqrt{x})-\frac{1}{t\sqrt{x}}\phi'(t\sqrt{x})\right),
\]
where $\phi(y)=\E e^{y\vartheta_1}$. The result follows from Lemma \ref{lem:spher-deriv-bounds}.
\end{proof}
We conclude with the proof of Lemma \ref{lem:spher-deriv-bounds}.
\begin{proof}[Proof of Lemma \ref{lem:spher-deriv-bounds}]
Recall that in \eqref{eq:Lambda_theta1} we have verified that $\phi(x)=\Gamma(n/2)h_0(x)$, where for $s=0,1,2,\ldots$ we let
\[
h_s(x) = \left(\frac{x}{2}\right)^{-\frac{n}{2}+1-s}I_{\frac{n}{2}-1+s}(x).
\]
The modified Bessel function satisfies the recurrence relations
\begin{equation}\label{eq:bessel-recurrence}
I_v'(x)=\frac{1}{2}(I_{v-1}(x)+I_{v+1}(x)), \qquad I_{v-1}(x) = \frac{2v}{x}I_v(x)+I_{v+1}(x)
\end{equation}
(see for example \cite[9.6.26]{Abr-Ste}). Differentiating and using \eqref{eq:bessel-recurrence}, we can check that $h_s'(x)=(x/2)h_{s+1}(x)$, for every $s=0,1,2,\ldots$ . In particular,
\begin{align*}
\phi''(x) = \Gamma\left(\frac{n}{2}\right)\left(\frac{1}{2}h_1(x)+\left(\frac{x}{2}\right)^2h_2(x)\right) = \frac{1}{x}\phi'(x)+\Gamma\left(\frac{n}{2}\right)\left(\frac{x}{2}\right)^2h_2(x),
\end{align*}
from which the wanted claim follows (clearly $h_2\gr 0$, cf. the series expansion \eqref{eq:bessel-series}).
\end{proof}

\subsection{Inequalities for the expectation of $\Lambda^\ast_X$}
 
In this section we provide sharp bounds on the expected value of $\Lambda^\ast_X$, and in particular the parameter $\E e^{-\Lambda_X^\ast(X)}$, for rotationally invariant dstributions $X$. We start with the following general upper bound.
\begin{theorem}\label{thm:e^(-L*)-max}
For every rotationally invariant log-concave random vector $X$ in $\mathbb{R}^n$ we have $\E e^{-\Lambda^\ast_X(X)}\ls \E e^{-\Lambda_{\mathcal E}^\ast(\mathcal E)}$.
\end{theorem}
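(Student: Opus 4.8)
The plan is to pass to an isotropic normalisation, use the pointwise bound $\Lambda^\ast_{X_1}\gr\Lambda^\ast_{\mathcal E_1}$ of Proposition \ref{prop:Exp-min}, and then reduce the remaining comparison to a convex-order (majorisation) statement for the squared Euclidean norm. Since $X\mapsto\E\,\Phi(\Lambda^\ast_X(X))$ is affine-invariant (Section \ref{sec:preliminaries}), I may assume $X$ is isotropic; being rotationally invariant and non-degenerate, $X$ then has a radially symmetric log-concave density $f_X$ and $\E|X|^2=n$, and the same holds for $\mathcal E$ by the choice of its normalising constant $c_n$. Using $\Lambda^\ast_X(x)=\Lambda^\ast_{X_1}(|x|)$ and Proposition \ref{prop:Exp-min},
\[
\E e^{-\Lambda^\ast_X(X)}=\E\, e^{-\Lambda^\ast_{X_1}(|X|)}\ls \E\, e^{-\Lambda^\ast_{\mathcal E_1}(|X|)}=\E\,\Phi\big(|X|^2\big),\qquad \Phi(s):=e^{-\Lambda^\ast_{\mathcal E_1}(\sqrt s)},
\]
and since $\E\,\Phi(|\mathcal E|^2)=\E e^{-\Lambda^\ast_{\mathcal E}(\mathcal E)}$, it remains to prove $\E\,\Phi(|X|^2)\ls\E\,\Phi(|\mathcal E|^2)$.

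Next I would verify that $\Phi$ is convex. Differentiating the explicit expression \eqref{eq:Exp-L*} one gets $\Phi'(s)=-\Phi(s)/\big(1+\sqrt{1+4s/(n+1)}\big)$; as $\Phi>0$ is decreasing while the denominator is increasing, $\Phi'$ is increasing, so $\Phi$ is convex on $(0,\infty)$, and it clearly extends to a nonnegative convex function on all of $\R$.

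The third and main step is the majorisation $|X|^2\prec|\mathcal E|^2$ in the convex order; granting it, Lemma \ref{lem:majorisation}(a) applied to the convex function $\Phi$ gives $\E\,\Phi(|X|^2)\ls\E\,\Phi(|\mathcal E|^2)$, which finishes the proof. Both squared norms have mean $n$, so by Lemma \ref{lem:majorisation}(c) it suffices to show that the densities $f(s)=\tfrac12 n\omega_n\, s^{n/2-1}f_X(\sqrt s)$ of $|X|^2$ and $g(s)=\tfrac12 n\omega_n c_n\, s^{n/2-1}e^{-\sqrt{n+1}\sqrt s}$ of $|\mathcal E|^2$ satisfy $f\ls g$ outside some interval $(a,b)\subset(0,\infty)$ and $f\gr g$ on $(a,b)$. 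Here
\[
\log f(s)-\log g(s)=\log f_X(\sqrt s)+\sqrt{n+1}\,\sqrt s-\log c_n,
\]
and $u\mapsto\log f_X(u)+\sqrt{n+1}\,u$ is concave, being the sum of the concave function $\log f_X$ and a linear one, so it is increasing on an initial interval and decreasing afterwards; this shape is preserved under the increasing substitution $u=\sqrt s$, hence $s\mapsto\log f(s)-\log g(s)$ is increasing then decreasing on $(0,\infty)$ and thus changes sign at most twice. The two constraints $\int(f-g)=0$ and $\int s\,(f-g)=0$ exclude zero or one sign change (unless $f\equiv g$, which is trivial), so $f-g$ has exactly the pattern $-,+,-$ required by Lemma \ref{lem:majorisation}(c); the case of compactly supported $f_X$ is identical, the outermost of the three intervals being where $f$ vanishes.

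I expect the third step to be the main obstacle. The majorisation $|X|^2\prec|\mathcal E|^2$ is a strengthening of Lemma \ref{lem:stoc-comparison}, valid for every rotationally invariant log-concave density rather than only those in $\mathcal{LC}_{conc}$, and it works precisely because the comparison distribution is the isotropic double exponential: its log-radial profile $-\sqrt{n+1}\,u$ is affine, which is exactly what makes $u\mapsto\log f_X(u)+\sqrt{n+1}\,u$ concave for an arbitrary log-concave $f_X$. The only other point requiring (routine) care is the extension of $\Phi$ to a nonnegative convex function on $\R$, so that Lemma \ref{lem:majorisation}(a) applies verbatim.
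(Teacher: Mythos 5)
Your proof is correct, and it takes a genuinely different (and in my view tidier) route to the paper's own. You share the first step (isotropic normalisation plus Proposition \ref{prop:Exp-min} to reduce the claim to $\E e^{-\Lambda^\ast_{\mathcal E_1}(|X|)}\ls \E e^{-\Lambda^\ast_{\mathcal E_1}(|\mathcal E|)}$), but afterwards you diverge: the paper proves this inequality by a bare-hands ``intersecting densities'' argument, writing $\Psi(x)=e^{-\Lambda^\ast_{\mathcal E_1}(x)}-ax^2-b$ with $a,b$ chosen to vanish at the two crossing points of $f_X$ and $f_{\mathcal E}$, and then devoting two auxiliary lemmas to pinning down the sign of $a$ and the monotonicity of $\Psi'$. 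You instead recast the step as a convex-order comparison: you verify directly (via the closed form \eqref{eq:Exp-L*}) that $\Phi(s)=e^{-\Lambda^\ast_{\mathcal E_1}(\sqrt s)}$ is convex, establish $|X|^2\prec|\mathcal E|^2$ from the two-crossing pattern via Lemma \ref{lem:majorisation}(c), and then Lemma \ref{lem:majorisation}(a) finishes. This buys two things: first, it replaces the paper's quadratic-auxiliary-function gymnastics by a one-line appeal to an already-stated lemma, making the structure of the argument transparent; second, the intermediate fact $|X|^2\prec|\mathcal E|^2$ for \emph{every} rotationally invariant log-concave $X$ is a genuinely new observation not present in the paper (the paper's Lemma \ref{lem:stoc-comparison} needs the extra $\mathcal{LC}_{conc}$ or $\mathcal{LC}_{conv}$ hypothesis when comparing to the Gaussian), and your remark about why it works against $\mathcal E$ but not $G$ — the log-radial profile of $\mathcal E$ being affine in $u=|x|$ rather than in $u^2$ — is exactly the right explanation.

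One small imprecision worth tightening: you write that $u\mapsto\log f_X(u)+\sqrt{n+1}\,u$, being concave, ``is increasing on an initial interval and decreasing afterwards.'' Concavity alone does not give this — the function could be monotone on all of $(0,\infty)$. What concavity does guarantee is that the superlevel set $\{h>0\}$ is an interval, hence the sign pattern is $(\le 0),(\ge 0),(\le 0)$ with at most two sign changes; the two moment constraints then force exactly two sign changes, which simultaneously rules out the monotone cases and gives the $-,+,-$ signature. This is what you in fact use, so the conclusion stands, but the phrasing ``increasing then decreasing'' should be derived from (rather than asserted before) the constraints.
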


For the proof of Theorem \ref{thm:e^(-L*)-max} we rely on the pointwise comparison established in Proposition \ref{prop:Exp-min} combined with the ``intersecting densities'' argument which has turned out to be a really efficient tool towards the proof of extremal inequalities in the log-concave setting (see for example \cite{ENT}, \cite{CET} and in particular \cite{BNZ}, where a comprehensive exposition of the method in a general framework is included).

\begin{proof}[Proof of Theorem \ref{thm:e^(-L*)-max}]
By the results of the previous section (Proposition \ref{prop:Exp-min}), we have that that $\Lambda^\ast_{X_1}(x)\gr \Lambda^\ast_{\mathcal E_1}(x)$ holds pointwise for any $x>0$, for every isotropic log-concave and spherically symmetric random vector $X$. It follows that for every such $X$, $\E e^{-\Lambda_X^\ast(X)}\ls \E e^{-\Lambda_{\mathcal E}^\ast(X)}=\E e^{-\Lambda_{\mathcal E_1}^\ast(|X|)}$. It remains to show that $\E e^{-\Lambda_{\mathcal E_1}^\ast(|X|)}\ls \E e^{-\Lambda_{\mathcal E_1}^\ast(|{\mathcal E}|)}$.
We denote by $f$ the density of $X$. Recall that the density of $|X|$ is then equal to $n\omega_nr^{n-1}f(r)\mathds{1}_{r>0}$.
\begin{lemma}\label{lem:intersect-dens}
The function $f-f_{\mathcal E}$ changes sign exactly twice on $(0,\infty)$, and its signature is $-,+,-$.
\end{lemma}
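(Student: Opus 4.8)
The plan is to reduce the statement to the concavity of $g:=\log f-\log f_{\mathcal E}$ together with the two moment identities imposed by normalisation and isotropicity of both $X$ and $\mathcal E$. First I would record those identities: writing $h:=f-f_{\mathcal E}$ for the difference of the two radial profiles, the facts that $f$ and $f_{\mathcal E}$ are probability densities on $\mathbb R^n$ and that $\E|X|^2=\E|\mathcal E|^2=n$ translate, after passing to polar coordinates, into
\[
n\omega_n\int_0^\infty r^{n-1}h(r)\,dr=0\qquad\hbox{ and }\qquad n\omega_n\int_0^\infty r^{n+1}h(r)\,dr=0 .
\]
I would also note that $h(r)$ has the same sign as $\log f(r)-\log f_{\mathcal E}(r)$ at every $r$ with $f(r)>0$, while $h(r)<0$ at every $r$ with $f(r)=0$.

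Next I would exploit log-concavity. Since $X$ is log-concave and radially symmetric, the radial profile $r\mapsto f(r)$ is log-concave on $(0,\infty)$, so $\log f$ is concave on the interval $(0,\rho)$ on which $f$ is positive, where $\rho\in(0,\infty]$. As $-\log f_{\mathcal E}(r)=\sqrt{n+1}\,r-\log c_n$ is affine, $g=\log f-\log f_{\mathcal E}$ is concave on $(0,\rho)$, hence its superlevel set $\{r:g(r)>0\}$ is a subinterval $(r_1,r_2)$ of $(0,\rho)$. Using that $h$ has the same sign as $g$ on $(0,\rho)$ and $h<0$ on $[\rho,\infty)$, one gets $\{r>0:h(r)>0\}=(r_1,r_2)$, i.e. $h\ls 0$ on $(0,r_1)$, $h>0$ on $(r_1,r_2)$ and $h\ls 0$ on $(r_2,\infty)$. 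In particular $h$ changes sign at most twice on $(0,\infty)$, and if it changes sign exactly twice the pattern is necessarily $-,+,-$; so it remains only to exclude zero and one sign change.

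This is where the two moment identities enter. If $h$ kept a constant sign a.e. on $(0,\infty)$, then $\int_0^\infty r^{n-1}h=0$ would force $h\equiv 0$, i.e. $X=\mathcal E$, which is the trivial case in which Theorem \ref{thm:e^(-L*)-max} holds with equality. If instead $h$ had a single sign change at some $r_0\in(0,\infty)$, I would test it against the weight $r^{n-1}(r^2-r_0^2)=r^{n+1}-r_0^2r^{n-1}$: by the two identities $\int_0^\infty r^{n-1}(r^2-r_0^2)h(r)\,dr=0$, whereas the integrand has a constant sign on $(0,\infty)$ — it is $\ls 0$ throughout if $h\gr 0$ on $(0,r_0)$ and $h\ls 0$ on $(r_0,\infty)$ (negative where $h>0$ since there $r^2<r_0^2$, negative where $h<0$ since there $r^2>r_0^2$), and $\gr 0$ throughout in the opposite configuration — so the integral identity forces the integrand to vanish a.e., hence $h\equiv 0$, contradicting that $h$ changes sign. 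Therefore $h=f-f_{\mathcal E}$ changes sign exactly twice, with signature $-,+,-$.

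I do not expect a genuine obstacle here; the two points that deserve care are the passage from log-concavity of $f$ on $\mathbb R^n$ to concavity of $g$ on the positivity interval $(0,\rho)$ of the radial profile — so that one is entitled to invoke that a superlevel set of a concave function is an interval — and, when $\rho<\infty$, the bookkeeping at $r=\rho$, where the outer sign change of $h$ may be realised through the (possible) downward jump of $f$ at the boundary of its support rather than through an interior zero of $g$.
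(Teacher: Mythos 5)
Your argument is correct and is essentially the paper's: both bound the number of sign changes by two using that $\log f$ is concave while $\log f_{\mathcal E}$ is affine, then use the two polar moment identities (mass and second moment) to rule out zero or one sign change by testing against $r^{n-1}(r^2-r_0^2)$, and finally read off the pattern $-,+,-$ from the interval structure of the superlevel set of the concave function $\log f-\log f_{\mathcal E}$. The only cosmetic difference is that the paper pins down the sign near $0$ by arguing directly that $f(0)\le c_n$, whereas you deduce it from having already excluded the one–sign–change case; you are also a bit more explicit about the endpoint of the support when $f$ is compactly supported, which is a harmless extra precaution.
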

\begin{proof}
Since $\log f$ is concave and decreasing on $(0,\infty)$ and $\log f_{\mathcal E}(x)=\log c_n-x\sqrt{n+1}$ is affine, it follows that $\log f-\log f_{\mathcal E}$ changes sign at most twice on $(0,\infty)$. Since $\int_0^\infty f(x)- f_{\mathcal E}(x)\,dx=0$, there must be at least one sign change. If there is only one, say at $x_1>0$, then the constraint $\E |X|^2=\E |{\mathcal E}|^2$ implies that $\int_{0}^\infty(x^2-x_1^2)(f(x)-f_{\mathcal E}(x))\,dx=0$. It is easy to check however that the integrand does not change its sign on $(0,\infty)$, which leads to a contradiction.

Moreover, since $\log f$ and $\log f_{\mathcal E}$ are both decreasing on $(0,\infty)$, if $f(0)>c_n=f_{\mathcal E}(0)$ then the graphs of $\log f, \log f_{\mathcal E}$ would cross at most once on $(0,\infty)$, which we have seen is not the case. This shows that $\log f-\log f_{\mathcal E}$, and hence $f-f_{\mathcal E}$, is negative close to $0$.
\end{proof}

Let us assume that $0<x_1<x_2$ are the two distinct sign changes of $f-f_{\mathcal E}$ on $(0,\infty)$, granted by Lemma \ref{lem:intersect-dens}. Note that, thanks to symmetry and $\E |X|^2=\E |{\mathcal E}|^2$,
\[
\int_{\mathbb{R}^n} e^{-\Lambda_{\mathcal E}^\ast(x)}(f(x)-f_{\mathcal E}(x))\,dx = n\omega_n\int_0^\infty x^{n-1} (e^{-\Lambda_{\mathcal E_1}^\ast(x)}-ax^2-b)(f(x)-f_{\mathcal E}(x))\,dx,
\]
for any $a,b\in\mathbb{R}$. Let $\Psi(x):=e^{-\Lambda_{\mathcal E_1}^\ast(x)}-ax^2-b$. We can choose $a,b$ such that $\Psi(x_1)=\Psi(x_2)=0$. This is because this condition is equivalent to
\[
A\cdot\begin{bmatrix}
a\\ b
\end{bmatrix} = \begin{bmatrix}
e^{-\Lambda_{\mathcal E_1}^\ast(x_1)}\\e^{-\Lambda_{\mathcal E_1}^\ast(x_2)}
\end{bmatrix}
\]
with $A=\begin{bmatrix}
x_1^2 & 1\\x_2^2 & 1
\end{bmatrix}$, so that $\mathrm{det}A=x_1^2-x_2^2<0$. In particular, we can solve explicitly for $a,b$ and get
\[
a=\frac{e^{-\Lambda_{\mathcal E_1}^\ast(x_1)}-e^{-\Lambda_{\mathcal E_1}^\ast(x_2)}}{x_1^2-x_2^2} \qquad\hbox{ and }\qquad b=\frac{x_1^2e^{-\Lambda_{\mathcal E}^\ast(x_2)}-x_2^2e^{-\Lambda_{\mathcal E}^\ast(x_1)}}{x_1^2-x_2^2}.
\]
Notice that $a<0$, due to the monotonicity of $\Lambda^\ast$. Moreover, the following Lemma implies that $a>-1/2$.
\begin{lemma}
The function $x\mapsto e^{-\Lambda^\ast_{\mathcal E_1}(x)}+\frac{x^2}{2}$ is nondecreasing on $(0,\infty)$.
\end{lemma}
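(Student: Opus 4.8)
The plan is to verify that $g(x):=e^{-\Lambda^\ast_{\mathcal E_1}(x)}+\tfrac{x^2}{2}$ has nonnegative derivative on $(0,\infty)$, reducing this through the explicit formula for $\Lambda^\ast_{\mathcal E_1}$ to an elementary one-variable inequality. Write $m=n+1$ and recall from \eqref{eq:Exp-L} that $\Lambda_{\mathcal E_1}(t)=-\tfrac m2\log(1-t^2/m)$, so that $\Lambda'_{\mathcal E_1}(t)=\tfrac{mt}{m-t^2}$ is a strictly increasing bijection of $(0,\sqrt m)$ onto $(0,\infty)$; moreover, by Legendre duality, $(\Lambda^\ast_{\mathcal E_1})'(x)=t$ and $\Lambda^\ast_{\mathcal E_1}(x)=xt-\Lambda_{\mathcal E_1}(t)$ whenever $x=\Lambda'_{\mathcal E_1}(t)$, $t>0$. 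Differentiating, $g'(x)=x-t\,e^{-\Lambda^\ast_{\mathcal E_1}(x)}$, and since $t>0$ and $e^{-\Lambda^\ast_{\mathcal E_1}(x)}>0$ for $x>0$, the inequality $g'(x)\gr 0$ is equivalent to $\Lambda^\ast_{\mathcal E_1}(x)\gr\log(t/x)$.

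Next I would introduce the variable $v:=1-t^2/m\in(0,1)$, in which $x=\Lambda'_{\mathcal E_1}(t)=\sqrt m\,\sqrt{1-v}/v$, so that $t/x=v$, and, substituting into $\Lambda^\ast_{\mathcal E_1}=xt-\Lambda_{\mathcal E_1}$, one gets $\Lambda^\ast_{\mathcal E_1}=m\bigl(\tfrac{1-v}{v}+\tfrac12\log v\bigr)$. The desired inequality then becomes $m\bigl(\tfrac{1-v}{v}+\tfrac12\log v\bigr)\gr\log v$, that is,
\[
\frac{1-v}{v}+\Bigl(\frac12-\frac1m\Bigr)\log v\ \gr\ 0\qquad\text{for every }v\in(0,1).
\]

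Finally, since $m=n+1\gr 2$ we have $\tfrac12-\tfrac1m\ls\tfrac12$, and because $\log v<0$ on $(0,1)$ it is enough to prove the extremal case $m=2$, namely $h(v):=\tfrac1v-1+\tfrac12\log v\gr 0$ on $(0,1)$; this is immediate since $h(1)=0$ and $h'(v)=\tfrac{v-2}{2v^2}<0$ on $(0,1)$, so $h$ decreases to $0$ and stays positive. The only step requiring any thought is the choice of the variable $v$, which is exactly what linearizes $\Lambda^\ast_{\mathcal E_1}$ up to a logarithmic term, exposes the monotone dependence on $m$, and collapses the whole statement to a one-line monotonicity argument; everything else is routine bookkeeping.
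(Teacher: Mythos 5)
Your proof is correct in substance and takes a genuinely different route from the paper's. The paper proves this lemma by writing $h'(x)=x(1-\psi(x))$ with $\psi(x)=\tfrac{(\Lambda^\ast)'(x)}{x}e^{-\Lambda^\ast(x)}$, then showing $\psi$ is decreasing on $(0,\infty)$ using the structural inequality $x(\Lambda^\ast)''(x)-(\Lambda^\ast)'(x)<0$ (which comes from Corollary \ref{cor:L-sqrt-convex} and the fact that $f_{\mathcal E}\in\mathcal{LC}_{conv}$), and finally invoking $\lim_{x\to 0^+}\psi(x)=1$. You instead bypass the convexity machinery entirely, plug in the explicit formula \eqref{eq:Exp-L}, parametrise by $t=(\Lambda^\ast_{\mathcal E_1})'(x)$ and the further variable $v=1-t^2/m$, and collapse the statement to the elementary inequality $\tfrac{1-v}{v}+(\tfrac12-\tfrac1m)\log v\gr 0$ on $(0,1)$. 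This is more self-contained and computational where the paper's argument is more structural; both are valid, and yours has the merit of not requiring any of the $\mathcal{LC}$ apparatus. All the substitutions check out: $x=\sqrt m\sqrt{1-v}/v$, $t/x=v$, $xt=m(1-v)/v$, $\Lambda_{\mathcal E_1}(t)=-\tfrac m2\log v$, hence $\Lambda^\ast_{\mathcal E_1}(x)=m(\tfrac{1-v}{v}+\tfrac12\log v)$.

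One thing to fix in the prose: you label the reduction as ``the extremal case $m=2$,'' but the function $h(v)=\tfrac1v-1+\tfrac12\log v$ you then analyse is the $c=\tfrac12$ case, i.e.\ the limit $m\to\infty$, not $m=2$ (which gives coefficient $\tfrac12-\tfrac12=0$ and the trivial inequality $\tfrac{1-v}{v}\gr 0$). Since $\log v<0$, the coefficient $\tfrac12-\tfrac1m$ being as large as possible is the \emph{hard} case, and $\tfrac12-\tfrac1m\uparrow\tfrac12$ as $m\to\infty$; so what you are really doing is proving the supremal case $c=\tfrac12$, which dominates all finite $m\gr 2$. The mathematics is right, but the sentence should say ``the limiting case $m\to\infty$'' (or ``coefficient $\tfrac12$''), not ``$m=2$.''
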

\begin{proof}
Letting $\Lambda^\ast=\Lambda^\ast_{\mathcal E_1}$ and $h(x)=e^{-\Lambda^\ast(x)}+x^2/2$, we can see that $h'(x)=x(1-\psi(x))$ for $\psi(x)=\frac{(\Lambda^{\ast})'(x)}{x}e^{-\Lambda^\ast(x)}$. We can check that $\psi$ is decreasing on $(0,\infty)$. This is because
\[
\psi'(x)=-\frac{(\Lambda^{\ast})'(x)^2}{x}e^{-\Lambda^\ast(x)}+\frac{e^{-\Lambda^\ast(x)}}{x^2}((\Lambda^{\ast})''(x)x-(\Lambda^{\ast})'(x))
\]
and $(\Lambda^{\ast})''(x)x-(\Lambda^{\ast})'(x)<0$ (by Corollary \ref{cor:L-sqrt-convex}, since $f_{\mathcal E}\in\mathcal{LC}_{conv}$). Moreover, notice that $\lim_{x\to 0^+}\frac{(\Lambda^\ast)'(x)}{x}$ equals 1 (recall that $\Lambda^\ast$ is given by \eqref{eq:Exp-L*}), which eventually implies that $\psi(x)\ls \lim_{x\to 0^+}\psi(x)=1$ for all $x>0$, and consequently $h'\gr 0$ on $(0,\infty)$.
\end{proof}
The restrictions $a\in(-1/2,0)$ are crucial in proving the following.
\begin{lemma}\label{lem:Psi-monotonicity}
If $-1/2<a<0$, then the function $\Psi'$ changes sign exactly once on $(0,\infty)$, from negative to positive.
\end{lemma}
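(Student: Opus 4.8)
The plan is to reduce the statement to the monotonicity of the auxiliary function $\psi(x)=\frac{(\Lambda^\ast_{\mathcal E_1})'(x)}{x}\,e^{-\Lambda^\ast_{\mathcal E_1}(x)}$ that already appeared in the proof of the preceding Lemma. First I would differentiate $\Psi(x)=e^{-\Lambda^\ast_{\mathcal E_1}(x)}-ax^2-b$ to obtain $\Psi'(x)=-(\Lambda^\ast_{\mathcal E_1})'(x)e^{-\Lambda^\ast_{\mathcal E_1}(x)}-2ax$, and then factor out $x>0$ to write $\Psi'(x)=-x\bigl(\psi(x)+2a\bigr)$ for every $x>0$. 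In this form the sign of $\Psi'$ on $(0,\infty)$ is opposite to the sign of $\psi(x)+2a$, so it suffices to show that $x\mapsto\psi(x)+2a$ changes sign exactly once on $(0,\infty)$, passing from positive to negative.

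Next I would recall, as established in the proof of the preceding Lemma (ultimately via Corollary \ref{cor:L-sqrt-convex} and the fact that $f_{\mathcal E}\in\mathcal{LC}_{conv}$, so that $x(\Lambda^\ast_{\mathcal E_1})''(x)-(\Lambda^\ast_{\mathcal E_1})'(x)<0$), that $\psi$ is continuous and strictly decreasing on $(0,\infty)$, with $\lim_{x\to0^+}\psi(x)=1$. To this I would add the limit $\lim_{x\to\infty}\psi(x)=0$, obtained from the closed form \eqref{eq:Exp-L*}: it yields $(\Lambda^\ast_{\mathcal E_1})'(x)=\frac{2x}{\sqrt{4x^2/(n+1)+1}+1}\ls\sqrt{n+1}$ and $\Lambda^\ast_{\mathcal E_1}(x)\to\infty$ as $x\to\infty$, hence $0\ls\psi(x)\ls\sqrt{n+1}\,x^{-1}e^{-\Lambda^\ast_{\mathcal E_1}(x)}\to0$. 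Since $-\tfrac12<a<0$ forces $2a\in(-1,0)$, the continuous, strictly decreasing function $x\mapsto\psi(x)+2a$ runs from $1+2a>0$ at $0^+$ down to $2a<0$ at $+\infty$, so it vanishes at a unique point $x_0\in(0,\infty)$, is positive on $(0,x_0)$ and negative on $(x_0,\infty)$. Translating back through $\Psi'(x)=-x(\psi(x)+2a)$ gives $\Psi'<0$ on $(0,x_0)$, $\Psi'(x_0)=0$ and $\Psi'>0$ on $(x_0,\infty)$, which is exactly the asserted behaviour.

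The only step I expect to require genuine care is the behaviour of $\psi$ at infinity: strict monotonicity alone only shows that $\psi$ has some limit $\ell\gr 0$ at $+\infty$, and one really needs $\ell=0$ rather than merely ``$\ell$ small'', since the admissible values of $-2a$ fill the whole interval $(0,1)$. This is what forces the use of the explicit expression for $\Lambda^\ast_{\mathcal E_1}$ instead of a soft argument; the remainder is sign-bookkeeping, most of which is already in place from the previous lemma.
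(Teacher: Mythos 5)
Your proof is correct and follows essentially the same route as the paper: reduce to the sign of $\psi(x)+2a$ with $\psi(x)=\frac{(\Lambda^\ast_{\mathcal E_1})'(x)}{x}e^{-\Lambda^\ast_{\mathcal E_1}(x)}$, then invoke the monotonicity of $\psi$ and its limit $1$ at $0^+$. The one extra step you supply — verifying via the closed form \eqref{eq:Exp-L*} that $(\Lambda^\ast_{\mathcal E_1})'$ is bounded by $\sqrt{n+1}$ and hence $\psi(x)\to 0$ as $x\to\infty$ — is something the paper's own proof leaves implicit, and you are right that it is genuinely needed: since $-2a$ ranges over all of $(0,1)$, knowing only that $\psi$ decreases from $1$ does not by itself guarantee a sign change, so your completion is a small but real tightening of the argument.
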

\begin{proof}
Differentiating, we see that $\Psi'(x)=-x\psi(x)-2ax$, where $\psi(x)=\frac{(\Lambda^{\ast})'(x)}{x}e^{-\Lambda^{\ast}(x)}$ (for $\Lambda^\ast=\Lambda^\ast_{\mathcal E_1}$). We have explained that $\psi$ is decreasing and satisfies $\lim_{x\to 0^+}\psi(x)=1$. It follows that $\Psi'(x)<0$ if $\psi(x)>-2a$, which is true for small enough values of $x$, since $a>-1/2$, and $\Psi'(x)>0$ if $\psi(x)<-2a$.
\end{proof}
We are now in place to complete the proof of Theorem \ref{thm:e^(-L*)-max}: By Lemma \ref{lem:Psi-monotonicity}, we deduce that $\Psi$ changes its monotonicity exactly once on $(0,\infty)$, from decreasing to increasing. This however implies that $x_1,x_2$ are the only roots of $\Psi$ on $(0,\infty)$, which correspond to sign change points. In particular, $\Psi$ must be positive on $(0,x_1)\cup(x_2,\infty)$ and negative on $(x_1,x_2)$. By Lemma \ref{lem:intersect-dens}, it follows then that $\Psi(x)(f(x)-f_{\mathcal E}(x))<0$ everywhere on $(0,\infty)$ and, as a consequence,
\[
\int_{\mathbb{R}^n} e^{-\Lambda_{\mathcal E}^\ast(x)}(f(x)-f_{\mathcal E}(x))\,dx = n\omega_n\int_0^\infty x^{n-1} \Psi(x)(f(x)-f_{\mathcal E}(x))\,dx \ls 0,
\]
which is the desired inequality.
\end{proof}

We can also use Lemma \ref{lem:intersect-dens} and an argument similar to the above, to prove the following.
\begin{theorem}\label{thm:EL*-rotinv-low}
Any rotationally invariant log-concave $X$ in $\mathbb{R}^n$ satisfies $\E\Lambda^\ast_X(X)\gr\E\Lambda^\ast_{\mathcal E}({\mathcal E})$.
\end{theorem}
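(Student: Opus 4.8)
The plan is to follow the strategy of the proof of Theorem~\ref{thm:e^(-L*)-max}, comparing now $\Lambda^\ast_{\mathcal E}$ itself against an affine function of $|x|^2$ in place of $e^{-\Lambda^\ast_{\mathcal E}}$. Since the functional $\E\Lambda^\ast_X(X)$ is affinely invariant and $X$ is rotationally invariant and log-concave, I would first normalise so that $X$ is isotropic; in particular $X$ is then non-degenerate and admits a radially symmetric log-concave density $f$, and $\Lambda^\ast_X(x)=\Lambda^\ast_{X_1}(|x|)$. By Proposition~\ref{prop:Exp-min} we have the pointwise bound $\Lambda^\ast_{X_1}(x)\gr\Lambda^\ast_{\mathcal E_1}(x)$ for every $x>0$, whence
\[
\E\Lambda^\ast_X(X)=\E\Lambda^\ast_{X_1}(|X|)\gr\E\Lambda^\ast_{\mathcal E_1}(|X|)=\E\Lambda^\ast_{\mathcal E}(X),
\]
so it remains to prove $\E\Lambda^\ast_{\mathcal E}(X)\gr\E\Lambda^\ast_{\mathcal E}(\mathcal E)$, that is, $\int_{\mathbb{R}^n}\Lambda^\ast_{\mathcal E}(x)\bigl(f(x)-f_{\mathcal E}(x)\bigr)\,dx\gr 0$.

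For this I would run the intersecting-densities argument. By Lemma~\ref{lem:intersect-dens}, $f-f_{\mathcal E}$ changes sign exactly twice on $(0,\infty)$, at points $0<x_1<x_2$, with signature $-,+,-$. Since $f$ and $f_{\mathcal E}$ are probability densities with the same second moment (both $X$ and $\mathcal E$ being isotropic), for any $a,b\in\mathbb{R}$ we have, passing to polar coordinates,
\[
\int_{\mathbb{R}^n}\Lambda^\ast_{\mathcal E}(x)\bigl(f(x)-f_{\mathcal E}(x)\bigr)\,dx=n\omega_n\int_0^\infty x^{n-1}\,\Psi(x)\bigl(f(x)-f_{\mathcal E}(x)\bigr)\,dx,
\]
where $\Psi(x)=\Lambda^\ast_{\mathcal E_1}(x)-ax^2-b$. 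As in the proof of Theorem~\ref{thm:e^(-L*)-max}, I would solve the (invertible) linear system making $\Psi(x_1)=\Psi(x_2)=0$. The remaining task is then to show that, with this choice, $\Psi$ has signature $-,+,-$ on $(0,\infty)$, for then $\Psi(x)\bigl(f(x)-f_{\mathcal E}(x)\bigr)\gr 0$ for all $x>0$ and the integral is nonnegative.

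To pin down the sign of $\Psi$, I would substitute $u=x^2$. Because $f_{\mathcal E}\in\mathcal{LC}_{conv}$, Lemma~\ref{lem:Barthe-Kold-conv} gives that $t\mapsto\Lambda_{\mathcal E_1}(\sqrt t)$ is convex, and hence Lemma~\ref{lem:L-sqrt-convex} (equivalently Corollary~\ref{cor:L-sqrt-convex}) gives that $u\mapsto\Lambda^\ast_{\mathcal E_1}(\sqrt u)$ is concave on $(0,\infty)$; therefore $\widetilde\Psi(u):=\Lambda^\ast_{\mathcal E_1}(\sqrt u)-au-b$ is concave. A concave function on $(0,\infty)$ vanishing at $u_1=x_1^2<u_2=x_2^2$ is $\ls 0$ on $(0,u_1]$, $\gr 0$ on $[u_1,u_2]$ and $\ls 0$ on $[u_2,\infty)$, so $\Psi(x)=\widetilde\Psi(x^2)$ has precisely the signature $-,+,-$. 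Combined with Lemma~\ref{lem:intersect-dens} this yields $\Psi\cdot(f-f_{\mathcal E})\gr 0$ on $(0,\infty)$, hence $\E\Lambda^\ast_{\mathcal E}(X)\gr\E\Lambda^\ast_{\mathcal E}(\mathcal E)$, which closes the chain of inequalities.

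I expect the only delicate point to be the signature of $\Psi$; in contrast with the proof of Theorem~\ref{thm:e^(-L*)-max}, which required an additional monotonicity analysis together with the constraint $a\in(-1/2,0)$, here the concavity of $u\mapsto\Lambda^\ast_{\mathcal E_1}(\sqrt u)$ settles the matter directly, with no restriction on $a$. It remains only to check (routinely) that all the integrals above converge absolutely, since $\Lambda^\ast_{\mathcal E_1}$ grows at most linearly while $f-f_{\mathcal E}$ decays exponentially, both densities being log-concave.
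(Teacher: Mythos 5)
Your proposal is correct, and it follows the same overall strategy as the paper (isotropic normalisation, pointwise bound from Proposition~\ref{prop:Exp-min}, reduction to $\E\Lambda^\ast_{\mathcal E}(X)\gr\E\Lambda^\ast_{\mathcal E}(\mathcal E)$, intersecting densities via Lemma~\ref{lem:intersect-dens}, and the same auxiliary function $\Psi(x)=\Lambda^\ast_{\mathcal E_1}(x)-ax^2-b$ with $\Psi(x_1)=\Psi(x_2)=0$). The one place you diverge is the argument that pins down the signature of $\Psi$, and your version is cleaner. The paper proceeds by first establishing that $a=\frac{\Lambda^\ast_{\mathcal E}(x_1)-\Lambda^\ast_{\mathcal E}(x_2)}{x_1^2-x_2^2}\in(0,1/2)$, using the auxiliary observation that $x\mapsto\Lambda^\ast_{\mathcal E}(x)-x^2/2$ is nonincreasing, and then analyses the sign of $\Phi'(x)=x\bigl((\Lambda^\ast_{\mathcal E})'(x)/x - 2a\bigr)$: the ratio $(\Lambda^\ast_{\mathcal E})'(x)/x$ is nonincreasing with limit $1$ at $0^+$, so with $a<1/2$ the derivative changes sign exactly once from positive to negative, which forces the signature $-,+,-$. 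You instead pass to $u=x^2$ and invoke Corollary~\ref{cor:L-sqrt-convex} (equivalently Lemma~\ref{lem:L-sqrt-convex}) to observe that $\widetilde\Psi(u)=\Lambda^\ast_{\mathcal E_1}(\sqrt u)-au-b$ is concave; a concave function with two zeros automatically has signature $-,+,-$, and no bound on $a$ is needed. Both routes ultimately rest on the same convexity fact (Corollary~\ref{cor:L-sqrt-convex}), since the monotonicity of $(\Lambda^\ast_{\mathcal E})'(x)/x$ is exactly the concavity of $\Lambda^\ast_{\mathcal E_1}(\sqrt u)$; but your packaging removes the need to estimate $a$ entirely and shortens the verification. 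It is a modest but genuine simplification of the paper's sketch.
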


\begin{proof}
The argument is the same as in the proof of Theorem \ref{thm:e^(-L*)-max}. First we argue that
\[
\E\Lambda_X^\ast(X) = \E\Lambda_{X_1}^\ast(|X|)\gr \E\Lambda_{{\mathcal E}_1}^\ast(|X|), 
\]
since $\Lambda_{X_1}^\ast\gr \Lambda_{\mathcal E_1}^\ast$ pointwise (by Proposition \ref{prop:Exp-min}). Again thanks to the constraint $\E|X|^2=\E|{\mathcal E}|^2$ we now have
\[
\int_{\mathbb{R}^n}\Lambda_{\mathcal E}^\ast(x)(f(x)-f_{\mathcal E}(x))\,dx=n\omega_n\int_0^\infty x^{n-1}(\Lambda_{\mathcal E_1}^\ast(x)-ax^2-b)(f(x)-f_{\mathcal E}(x))\,dx
\]
for any $a,b\in\mathbb{R}$. We now consider the function $\Phi(x)=\Lambda^\ast_{\mathcal E_1}(x)-a x^2-b$, with $a,b$ now chosen so that $\Phi(x_1)=\Phi(x_2)=0$ (where again $0<x_1<x_2$ are the unique sign changes of $f_X-f_{\mathcal E}$ on $(0,\infty)$, granted by Lemma \ref{lem:intersect-dens}). Notice that now $a=\frac{\Lambda^\ast_{\mathcal E_1}(x_1)-\Lambda^\ast_{\mathcal E_1}(x_2)}{x_1^2-x_2^2}\in(0,1/2)$. The lower bound here is obvious, since $x_1<x_2$, and the upper bound follows from the fact that the function $h(x) = \Lambda^\ast_{\mathcal E_1}(x)-x^2/2$ is non-increasing (since $h'(x)=\frac{x\left(1-\sqrt{\frac{4x^2}{n+1}+1}\right)}{1+\sqrt{\frac{4x^2}{n+1}+1}}$ is clearly non-positive).

Like before, we now need to show that $\Phi$ has the same sign as $f_X-f_{\mathcal E}$ on each of the intervals $(0,x_1)$, $(x_1,x_2)$ and $(x_2,\infty)$, so it is enough to verify that $\Phi'$ changes its sign exactly once, from positive to negative. Note that $\Phi'(x)=x(\phi(x)-2a)$, where $\phi(x)=(\Lambda^{\ast}_{\mathcal E_1})'(x)/x$. Since $\phi'(x)=\frac{(\Lambda_{\mathcal E_1}^\ast)''(x)x-(\Lambda_{\mathcal E_1}^\ast)'(x)}{x^2}\ls 0$ (again, by Corollary \ref{cor:L-sqrt-convex}) we get that $\phi$ is non-increasing, with $\phi(x)\to 1$ as $x\to 0$. Since $a<1/2$, it follows then that $\phi(x)>2a$ holds only for $x$ sufficiently close to 0, which shows the desired result for $\Phi'$.
\end{proof}

As a direct corollary of Theorem \ref{thm:e^(-L*)-max}, for the case $n=1$, we get the following bound on the exponential one-shot separability constant of \textit{symmetric} log-concave distributions.
\begin{corollary}\label{cor:one-shot-prod}
Let $X=(X_1,\ldots,X_n)$ be a random vector in $\mathbb{R}^n$ with independent and symmetric log-concave components. Then $\E e^{-\Lambda^\ast_X(X)}\ls (\E e^{-\Lambda^\ast_{Y}(Y)})^n$, where $Y$ is a standard double exponential (note that $\E e^{-\Lambda^\ast_{Y}(Y)}\simeq 0.787...$).
\end{corollary}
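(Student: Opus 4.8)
The plan is to deduce the statement from the one-dimensional case of Theorem~\ref{thm:e^(-L*)-max}, together with the fact that the Cram\'{e}r transform is additive over product structures. So the only real content to import is the already-established sharp one-dimensional bound; everything else is bookkeeping.

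First I would record the tensorisation step. If $X=(X_1,\dots,X_n)$ has independent coordinates, then $\Lambda_X(\xi)=\log\E e^{\langle X,\xi\rangle}=\sum_{j=1}^n\Lambda_{X_j}(\xi_j)$, and taking the Legendre transform one variable at a time (a direct sum dualises to a direct sum) gives $\Lambda_X^\ast(x)=\sum_{j=1}^n\Lambda_{X_j}^\ast(x_j)$ for every $x\in\mathbb{R}^n$. Hence $e^{-\Lambda_X^\ast(X)}=\prod_{j=1}^n e^{-\Lambda_{X_j}^\ast(X_j)}$, and since the $X_j$ are independent,
\[
\E e^{-\Lambda_X^\ast(X)}=\prod_{j=1}^n\E e^{-\Lambda_{X_j}^\ast(X_j)}.
\]
It therefore suffices to prove the one-dimensional bound $\E e^{-\Lambda_{X_1}^\ast(X_1)}\ls c_{\mathrm{exp}}$ for an arbitrary symmetric log-concave random variable $X_1$.

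For this I would observe that in $\mathbb{R}^1$ the orthogonal group is $\{\pm1\}$, so ``rotationally invariant'' means exactly ``symmetric''; thus Theorem~\ref{thm:e^(-L*)-max} applied with $n=1$ yields $\E e^{-\Lambda_{X_1}^\ast(X_1)}\ls \E e^{-\Lambda_{\mathcal E}^\ast(\mathcal E)}$, where at $n=1$ the random variable $\mathcal E$ has density proportional to $e^{-\sqrt{2}\,|x|}$, i.e. it is a scalar multiple of the standard double exponential $Y$. Since $\E_\mu\Phi(\Lambda_\mu^\ast)$ is affinely invariant (as recorded in Section~\ref{sec:preliminaries}), we get $\E e^{-\Lambda_{\mathcal E}^\ast(\mathcal E)}=\E e^{-\Lambda_Y^\ast(Y)}=c_{\mathrm{exp}}$, and substituting into the display above completes the proof. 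The numerical value $c_{\mathrm{exp}}\simeq0.787$ can then be read off from the explicit expressions \eqref{eq:Exp-L} and \eqref{eq:Exp-L*} at $n=1$ (equivalently, from $\Lambda_Y(t)=-\log(1-t^2)$) by a single one-dimensional integration.

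I do not expect any genuine obstacle: the corollary is purely a tensorisation on top of Theorem~\ref{thm:e^(-L*)-max}. The only points needing (routine) attention are the decoupling $\Lambda_X^\ast=\sum_j\Lambda_{X_j}^\ast$ for product measures and the identification of the one-dimensional extremiser $\mathcal E$ as an affine copy of $Y$ via the correct normalisation; both are immediate once one invokes the affine invariance of $\E e^{-\Lambda^\ast}$ from the preliminaries.
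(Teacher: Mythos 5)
Your proposal is correct and follows essentially the same route as the paper: reduce to the one-dimensional case of Theorem~\ref{thm:e^(-L*)-max} (noting that rotational invariance in $\mathbb{R}^1$ is just symmetry), then tensorize via the additivity $\Lambda_X^\ast=\sum_j\Lambda_{X_j}^\ast$ and independence. The only extra detail you spell out beyond the paper's terse proof is the affine-invariance step identifying the $n=1$ extremiser $\mathcal E$ (density $\propto e^{-\sqrt{2}|x|}$) with the standard Laplace $Y$, which is indeed needed and correct.
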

\begin{proof}
The case $n=1$ of Theorem \ref{thm:e^(-L*)-max} implies that $\E e^{-\Lambda^\ast_{X_j}(X_j)}\ls \E e^{-\Lambda^\ast_{Y}(Y)}$ for every $j=1,\ldots,n$, and by independence we have that $\E e^{-\Lambda^\ast_X(X)} = \prod_{j=1}^n \E e^{-\Lambda^\ast_{X_j}(X_j)}$.
\end{proof}

Given Theorem \ref{thm:e^(-L*)-max}, one could wonder if a more general result than that of Corollary \ref{cor:one-shot-prod} holds, namely that the right-hand side bound $(\E e^{-\Lambda^\ast_{Y}(Y)})^n$ is true for all rotationally-invariant (not necessarily product) log-concave distributions $X$ in $\mathbb{R}^n$. We already know that for every such $X$, $\E e^{-\Lambda^\ast_X}\ls c^{n}$ for some absolute constant $c\in(0,1)$ (this follows already from \eqref{eq:EL*-BGP} and the well known fact that $L_X$ is bounded in the rotationally invariant case). We believe that the sharp value of $c$ is that provided by Theorem \ref{thm:e^(-L*)-max}. In particular, numerical evidence leads us to the following conjecture. To stress the dependence on the dimension $n$, we denote here by $X_n$ the random vector $\mathcal{E}$ in $\mathbb{R}^n$.
\begin{conjecture}\label{conj:Ee^(-L^ast)-max}
The function
\[
n\mapsto \left(\E e^{-\Lambda^\ast_{X_n}(X_n)}\right)^\frac{1}{n}
\]
is non-increasing.
\end{conjecture}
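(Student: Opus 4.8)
The plan is to reduce the conjecture to an explicit one–variable analytic statement and then attack the resulting monotonicity by a quantitative Laplace analysis, uniform in the dimension. First I would make $I_n:=\E e^{-\Lambda^\ast_{X_n}(X_n)}$ completely explicit. Since $\Lambda^\ast_{X_n}(x)=\Lambda^\ast_{\mathcal E_1}(|x|)$, the quantity $I_n$ depends only on the radial law of $X_n=\mathcal E$, and the moment computation in the proof of Proposition~\ref{prop:Exp-min} identifies it: $\sqrt{n+1}\,|\mathcal E|$ has a $\mathrm{Gamma}(n,1)$ distribution. Writing $v:=|\mathcal E|/\sqrt{n+1}$ and using the closed form \eqref{eq:Exp-L*} for $\Lambda^\ast_{\mathcal E_1}$ (so that $4|\mathcal E|^2/(n+1)=4v^2$), one obtains
\[
I_n=\frac{(n+1)^n}{\Gamma(n)}\int_0^\infty e^{\,(n-1)\log v-(n+1)v+\frac{n+1}{2}F(v)}\,dv,\qquad F(v):=\log\frac{\sqrt{1+4v^2}+1}{2}-\bigl(\sqrt{1+4v^2}-1\bigr).
\]
The structural point is that the profile $F$ is \emph{independent} of $n$ and all $n$–dependence sits in the Gamma prefactor and linearly in the exponent, which equals $n\Phi(v)+\rho(v)$ with $\Phi(v):=\log v-v+\tfrac12F(v)$ and $\rho(v):=\Phi(v)-2\log v$.

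Next I would recast the claim. $I_n^{1/n}$ is non-increasing if and only if $n\mapsto\tfrac1n\log I_n$ is; extending the formula above to real $n>0$ via the Gamma function, it suffices to prove $n\,\partial_n\log I_n\le\log I_n$. Since the derivative of $n\,\partial_n\log I_n-\log I_n$ is $n\,\partial_n^2\log I_n$, the task reduces further: if one can show $\log I_n$ is concave on $[1,\infty)$, then it is enough to verify $n\,\partial_n\log I_n\le\log I_n$ at $n=1$, a single inequality. Differentiating under the integral gives clean expressions: with $\nu_n$ the tilted probability measure of density $\propto v^{n-1}e^{-(n+1)v+\frac{n+1}{2}F(v)}$ and $\psi$ the digamma function,
\[
\partial_n\log I_n=\Bigl(\log(n+1)+\tfrac{n}{n+1}-\psi(n)\Bigr)+\E_{\nu_n}\Phi,\qquad
\partial_n^2\log I_n=\Bigl(\tfrac{1}{n+1}+\tfrac{1}{(n+1)^2}-\psi'(n)\Bigr)+\Var_{\nu_n}\Phi.
\]
A short computation locates the unique interior maximum of $\Phi$ at $v^\ast=\tfrac23$ (with $\sqrt{1+4v^{\ast2}}=\tfrac53$ and $\Phi''(v^\ast)=-\tfrac{27}{10}$); $\nu_n$ concentrates around $v^\ast$ with Gaussian width $\asymp n^{-1/2}$, and the standard Laplace expansion yields $\tfrac1n\log I_n\to 1+\Phi(v^\ast)=\log\tfrac{4}{3\sqrt3}$ together with $\log I_n=\bigl(1+\Phi(v^\ast)\bigr)n+\bigl(1+\rho(v^\ast)-\tfrac12\log|\Phi''(v^\ast)|\bigr)+O(1/n)$, where $1+\rho(v^\ast)-\tfrac12\log|\Phi''(v^\ast)|=\tfrac12\log\tfrac{10}{9}$.

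The hard part is that all of these quantities balance on a knife's edge. In the asymptotic regime the target inequality comes down, after the $n\Phi(v^\ast)$ and $\log n$ terms cancel, to $\tfrac12\log\tfrac{10}{9}\ge0$ — true, but with a margin of only $\approx0.05$, so the first subleading term in the Laplace expansion is genuinely relevant; moreover the two summands of $\partial_n^2\log I_n$ (the digamma piece, which is $\sim-\tfrac{1}{2n^2}$, and $\Var_{\nu_n}\Phi\sim\tfrac{1}{2n^2}$ because $\Var_{\nu_n}\Phi\approx\tfrac14\Phi''(v^\ast)^2\,\Var_{\nu_n}\!\bigl((v-v^\ast)^2\bigr)$) cancel to leading order, so even the concavity of $\log I_n$ is not for free. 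Consequently the decisive step will be a fully \emph{quantitative} Laplace/saddle–point estimate — two-sided bounds on the normalising integral, on $\E_{\nu_n}\Phi$, $\E_{\nu_n}v$, $\E_{\nu_n}\log v$ and $\Var_{\nu_n}\Phi$, with explicitly controlled remainders and uniform in $n\ge1$ — which must then be complemented by a rigorous (possibly numerically assisted, with explicit error bars on the integral defining $I_n$) verification for an initial range of small $n$. A crossing/intersecting-densities comparison of the radial laws of $X_n$ and $X_{n+1}$, in the spirit of Theorems~\ref{thm:e^(-L*)-max} and \ref{thm:EL*-rotinv-low}, appears obstructed here because the test function $e^{-\Lambda^\ast_{\mathcal E_1}}$ itself varies with the dimension, leaving no single functional against which two densities in different dimensions can be weighed; a cleaner route, if one exists, would be a hidden three–term recursion relating $I_{n+1}$, $I_n$ and $I_{n-1}$ through the Bessel–Gamma identities underlying Lemma~\ref{lem:L_sigma}, which would promote the numerical evidence directly into a proof.
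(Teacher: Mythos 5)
The statement you are asked about is \emph{Conjecture}~\ref{conj:Ee^(-L^ast)-max}, which the paper does \emph{not} prove: the authors state explicitly that ``numerical evidence leads us to the following conjecture'' and leave it open. There is therefore no proof in the paper to compare your attempt against, and the most useful thing I can do is assess the internal soundness of your plan.

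Your preparatory computations are all correct. Indeed $\sqrt{n+1}\,|\mathcal{E}|\sim\mathrm{Gamma}(n,1)$ (this follows directly from the density $n\omega_n r^{n-1}c_n e^{-\sqrt{n+1}\,r}$ of $|\mathcal{E}|$), the change of variables $v=|\mathcal{E}|/\sqrt{n+1}$ turns \eqref{eq:Exp-L*} into $-\Lambda^\ast_{\mathcal{E}_1}(\sqrt{n+1}\,v)=\tfrac{n+1}{2}F(v)$ with the $n$-independent profile you wrote, the exponent is $n\Phi(v)+\rho(v)$ with $\Phi(v)=\log v-v+\tfrac12 F(v)$ and $\rho=\Phi-2\log v$, the unique interior maximum is at $v^\ast=2/3$ with $\sqrt{1+4v^{\ast2}}=5/3$, $\Phi''(v^\ast)=-27/10$, $1+\Phi(v^\ast)=\log\frac{4}{3\sqrt3}$, and the next-order constant is $1+\rho(v^\ast)-\tfrac12\log\lvert\Phi''(v^\ast)\rvert=\tfrac12\log\tfrac{10}{9}>0$. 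So the sequence runs from $I_1^{1}\approx 0.787$ down to the limit $4/(3\sqrt3)\approx 0.770$, and to leading order beyond the limit the monotonicity does hold, by a margin of about $\tfrac12\log\tfrac{10}{9}\approx 0.053$ in the expansion of $\log I_n$. Your remark that the intersecting-densities device from Theorems~\ref{thm:e^(-L*)-max} and \ref{thm:EL*-rotinv-low} is unavailable here is also right and worth making: those arguments compare two densities in the \emph{same} dimension against a fixed test function, whereas here both the reference law and the functional $e^{-\Lambda^\ast_{\mathcal{E}_1}}$ change with $n$, so there is no common arena for the crossing-count argument.

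What you have, though, is a programme and not a proof, and you say so yourself. The genuine gap is the one you flag: the two pieces of $\partial_n^2\log I_n$ cancel to leading order (both $\sim\pm\tfrac{1}{2n^2}$), so concavity of $\log I_n$ on $[1,\infty)$ is itself nontrivial, and a complete argument would require two-sided, $n$-uniform error bounds in the Laplace expansion for $\E_{\nu_n}\Phi$, $\Var_{\nu_n}\Phi$, $\E_{\nu_n}v$ and $\E_{\nu_n}\log v$, together with an initial-segment check. None of this is carried out, and it is precisely this part where the difficulty lives. Your suggested alternative --- hunting for a three-term recursion for $I_n$ through the Bessel/Gamma identities behind Lemma~\ref{lem:L_sigma} --- is a sensible thing to look for, but be aware that the $n$-dependence inside $\Lambda^\ast_{\mathcal{E}_1}$ (the $\tfrac{n+1}{2}$ prefactor and the $\sqrt{n+1}$ scaling) breaks the clean contiguous-parameter structure that usually produces such recursions, so it is far from guaranteed to exist in a usable form. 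In short: the reduction and the asymptotics are right and would be a reasonable opening to an eventual proof, but the conjecture remains open in the paper and remains open after your proposal.
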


We close this section by providing improved Gaussian bounds for the parameter $e^{-\Lambda_X^\ast(X)}$ when $f_X$ lies in either of the classes $\mathcal{LC}_{conc}$ or $\mathcal{LC}_{conv}$. The proof of the following theorem employs the same argument as that of Theorem \ref{thm:e^(-L*)-max}.
\begin{theorem}\label{thm:gaussian-Ee^L*}
Let $X$ be a rotationally invariant log-concave random vector in $\mathbb{R}^n$, and assume that $f_X\in\mathcal{LC}_{conv}$. Then $\E e^{-\Lambda_X^\ast(X)}\gr \E e^{-\Lambda_G^\ast(G)}$. If $f_X\in\mathcal{LC}_{conc}$, the inequality is reversed.
\end{theorem}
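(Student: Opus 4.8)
The plan is to reduce everything to a one-dimensional comparison, in the spirit of the proof of Theorem \ref{thm:e^(-L*)-max}, but now with the Gaussian playing the role of the reference distribution. Since $\E e^{-\Lambda^\ast_X(X)}$ is affinely invariant (as noted for $\E_\mu\Phi(\Lambda^\ast_\mu)$ in Section \ref{sec:preliminaries}, with $\Phi(y)=e^{-y}$) and a rotationally invariant log-concave random vector has covariance matrix a positive multiple of the identity, I would first rescale $X$ so as to assume, without loss of generality, that $X$ is isotropic. This transformation preserves rotational invariance, log-concavity, and membership of $f_X$ in $\mathcal{LC}_{conv}$ or $\mathcal{LC}_{conc}$, since these classes are stable under dilations $f(\cdot)\mapsto f(c\,\cdot)$. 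After this normalisation both $X$ and $G$ satisfy $\E|X|^2=\E|G|^2=n$, which is exactly the moment-matching needed to place $|X|^2$ and $|G|^2$ in the convex order.

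Next I would use that $\Lambda^\ast_X(x)=\Lambda^\ast_{X_1}(|x|)$ and that, since $G_1$ is a standard real Gaussian, $\Lambda_{G_1}(t)=t^2/2$, so $\Lambda^\ast_{G_1}(x)=x^2/2$ and $\Lambda^\ast_G(x)=|x|^2/2$. Assume first $f_X\in\mathcal{LC}_{conv}$. Theorem \ref{thm:L^ast-maj-gaussian} gives the pointwise bound $\Lambda^\ast_{X_1}\ls\Lambda^\ast_{G_1}$ on $(0,\infty)$, whence
\[
\E e^{-\Lambda^\ast_X(X)} = \E e^{-\Lambda^\ast_{X_1}(|X|)} \gr \E e^{-\Lambda^\ast_{G_1}(|X|)} = \E e^{-|X|^2/2}.
\]
By Lemma \ref{lem:stoc-comparison}, $f_X\in\mathcal{LC}_{conv}$ yields $|G|^2\prec|X|^2$, so Lemma \ref{lem:majorisation}(a) applied to the convex nonnegative function $t\mapsto e^{-t/2}$ gives $\E e^{-|X|^2/2}\gr\E e^{-|G|^2/2}=\E e^{-\Lambda^\ast_G(G)}$. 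Chaining the two displays proves $\E e^{-\Lambda^\ast_X(X)}\gr\E e^{-\Lambda^\ast_G(G)}$. When $f_X\in\mathcal{LC}_{conc}$, each step reverses: Theorem \ref{thm:L^ast-maj-gaussian} now gives $\Lambda^\ast_{X_1}\gr\Lambda^\ast_{G_1}$, hence $\E e^{-\Lambda^\ast_X(X)}\ls\E e^{-|X|^2/2}$, while Lemma \ref{lem:stoc-comparison} gives $|X|^2\prec|G|^2$, hence $\E e^{-|X|^2/2}\ls\E e^{-|G|^2/2}=\E e^{-\Lambda^\ast_G(G)}$.

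I do not expect a genuine obstacle: the two substantial inputs (Theorem \ref{thm:L^ast-maj-gaussian} and Lemma \ref{lem:stoc-comparison}) are already in place, and against the Gaussian the second-order-dominance step requires only that $t\mapsto e^{-t/2}$ is convex. This is precisely why no analogue of the ``intersecting densities'' bookkeeping of Theorem \ref{thm:e^(-L*)-max} is needed here, the point there being that $x\mapsto e^{-\Lambda^\ast_{\mathcal E_1}(\sqrt{x})}$ fails to be convex, so one has to correct it by an affine function of $x^2$ and track signs. The only care needed is the harmless reduction to the isotropic case, so that both Theorem \ref{thm:L^ast-maj-gaussian} and Lemma \ref{lem:stoc-comparison} apply; alternatively, one could instead mimic the proof of Theorem \ref{thm:e^(-L*)-max} verbatim with $f_{\mathcal E}$ replaced by $f_G$, the sign analysis of $f_X-f_G$ being supplied by the proof of Lemma \ref{lem:stoc-comparison} and the relevant convexity of $\Lambda^\ast_{G_1}(\sqrt{\cdot})$ being trivial, though the route above is shorter.
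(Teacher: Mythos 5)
Your proof is correct, and it takes a genuinely shorter route than the paper's. The paper re-does an ``intersecting densities'' sign analysis from scratch: it verifies afresh that $f_X(\sqrt{\cdot})-f_G(\sqrt{\cdot})$ has exactly two sign changes on $(0,\infty)$, then introduces the affine correction $\Phi(y)=e^{-y/2}-ay-b$ vanishing at those two crossings, proves $a\in(-1/2,0)$, and tracks the sign of $\Phi(f_X-f_G)$. You instead cite Lemma~\ref{lem:stoc-comparison} directly --- which encapsulates exactly that sign analysis, via Lemma~\ref{lem:majorisation}(c) --- and then apply Lemma~\ref{lem:majorisation}(a) to the convex nonnegative test function $t\mapsto e^{-t/2}$, so that $|G|^2\prec|X|^2$ immediately yields $\E e^{-|X|^2/2}\gr\E e^{-|G|^2/2}$ (and symmetrically in the $\mathcal{LC}_{conc}$ case). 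Both inputs you use are already established in the paper, the reduction to the isotropic case is standard and preserves the classes $\mathcal{LC}_{conv/conc}$ as you note, and the chain of inequalities is clean. So this is a valid simplification, and in fact exposes that the paper's Lemma~\ref{lem:stoc-comparison} already contains the heavy lifting for this theorem.

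One small misdiagnosis in your final paragraph: you claim the reason Theorem~\ref{thm:e^(-L*)-max} cannot be reduced the same way is that $x\mapsto e^{-\Lambda^\ast_{\mathcal E_1}(\sqrt{x})}$ fails to be convex. That is not so. Since $f_{\mathcal E}\in\mathcal{LC}_{conv}$, Lemmas~\ref{lem:Barthe-Kold-conv} and~\ref{lem:L-sqrt-convex} show that $x\mapsto\Lambda^\ast_{\mathcal E_1}(\sqrt{x})$ is \emph{concave}; hence $-\Lambda^\ast_{\mathcal E_1}(\sqrt{x})$ is convex, and composing with the increasing convex map $u\mapsto e^u$ shows that $e^{-\Lambda^\ast_{\mathcal E_1}(\sqrt{x})}$ \emph{is} convex. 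This does not affect the correctness of your proof of Theorem~\ref{thm:gaussian-Ee^L*} --- it is only a side remark about a different result --- but the stated rationale is wrong.
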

\begin{proof}
We can assume that $X$ is isotropic. Recall that, by Theorem \ref{thm:L^ast-maj-gaussian}, $e^{-\Lambda^\ast_{X_1}(x)}\gr e^{-\Lambda_{G_1}(x)}=e^{-\frac{x^2}{2}}$ for every $x>0$ if $f_X\in\mathcal{LC}_{conv}$ (and the opposite inequality holds if $f_X\in\mathcal{LC}_{conc}$). It is then enough to prove that $\E e^{-\frac{|X|^2}{2}}\gr \E e^{-\frac{|G|^2}{2}}$ when $f_X\in\mathcal{LC}_{conv}$ (resp. $\E e^{-\frac{|X|^2}{2}}\ls \E e^{-\frac{|G|^2}{2}}$ $f_X\in\mathcal{LC}_{conc}$). As in the proof of Theorem \ref{thm:e^(-L*)-max}, the wanted result will rely on the comparison of the densities $f:=f_{X}$ and $f_{G}$. Note that, after integration in polar coordinates and a change of variables,
\begin{align*}
\E e^{-\frac{|X|^2}{2}}- \E e^{-\frac{|G|^2}{2}} &= n\omega_n\int_0^\infty e^{-\frac{x^2}{2}}x^{n-1}(f(x)-f_{G}(x))\,dx\\
                                              &= \frac{n\omega_n}{2}\int_0^\infty e^{-\frac{y}{2}}y^{\frac{n}{2}}(f(\sqrt{y})-f_G(\sqrt{y}))\,dy.
\end{align*}
As in the proof of Lemma \ref{lem:intersect-dens}, the constraint $\E|X|^2=\E|G|^2$, imposed by isotropicity, will imply that $f(\sqrt{y})-f_G(\sqrt{y})$ has at least two zeroes on $(0,\infty)$. Moreover $\log f(\sqrt{y})$ is nonincreasing and either convex or concave (given that $f_X\in\mathcal{LC}_{conv}$ or $f_X\in\mathcal{LC}_{conc}$, respectively). It follows that it crosses the affine function $\log f_G(\sqrt{y})$ at most twice, hence $f(\sqrt{y})-f_G(\sqrt{y})$ must have exactly two sign changes, say at the points $0<y_1<y_2$, with its sign being $+,-,+$ (if $f_X\in\mathcal{LC}_{conv}$) or $-,+,-$ (if $f_X\in\mathcal{LC}_{conc}$).

Next, consider the function $\Phi(y)=e^{-\frac{y}{2}}-ay-b$, where $a,b$ are chosen so that $\Phi(y_1)=\Phi(y_2)=0$. Namely, we have that
\[
a=\frac{e^{-\frac{y_1}{2}}-e^{-\frac{y_2}{2}}}{y_1-y_2} \qquad\hbox{ and }\qquad b=\frac{y_1e^{-\frac{y_1}{2}}-y_2e^{-\frac{y_2}{2}}}{y_1-y_2}.
\]
Note that the fact that $t\mapsto e^{-t}+t$ is increasing on $(0,\infty)$ implies that $a\in(-1/2,0)$. Differentiating, it is immediate to check that $\Phi$ is convex decreasing for $e^{-y/2}>-2a$ and increasing for $e^{-y/2}<-2a$. It follows that $\Phi$ is first decreasing and then increasing, and since it has at least two zeroes, convexity implies that these are the only two, with its sign being $+,-,+$. Putting everything together, we have verified that $\E e^{-\frac{|X|^2}{2}}- \E e^{-\frac{|G|^2}{2}}\gr 0$ if $f_X\in\mathcal{LC}_{conv}$ (and $\ls 0$ if $f_X\in\mathcal{LC}_{conc}$), which is the wanted statement.
\end{proof}

\begin{remark}
Thanks to independence, it is easy to verify that $\E e^{-\Lambda^\ast_G(G)}=2^{-\frac{n}{2}}$. Theorem \ref{thm:gaussian-Ee^L*} then implies that the upper bound of Corollary \ref{cor:one-shot-prod} can be sharpened to $2^{-\frac{n}{2}}$ for every rotationally invariant log-concave random vector $X$ in $\mathbb{R}^n$ such that $f_X\in\mathcal{LC}_{conc}$ (not necessarily with independent coordinates).
\end{remark}

\noindent The main results of this section are summarised in Table 1 below.

\begin{table}[]
\begin{tabular}{ccccc}
\cline{2-5}
\multicolumn{1}{c|}{}  & \multicolumn{2}{c||}{Extremisers for $\Lambda^\ast_X$}                         & \multicolumn{2}{c|}{Extremisers for $\mathbb{E}e^{-\Lambda^\ast_X(X)}$}                         \\ \cline{2-5} 
\multicolumn{1}{c|}{}  & \multicolumn{1}{c|}{Minimiser} & \multicolumn{1}{c||}{Maximiser} & \multicolumn{1}{c|}{Minimiser} & \multicolumn{1}{c|}{Maximiser} \\ \hline
\multicolumn{1}{|c|}{General $f_X$} & \multicolumn{1}{c|}{${\mathcal E}$} & \multicolumn{1}{c||}{$\sqrt{n}\vartheta$} & \multicolumn{1}{c|}{?} & \multicolumn{1}{c|}{${\mathcal E}$} \\ \hline
\multicolumn{1}{|c|}{$f_X\in\mathcal{LC}_{conc}$} & \multicolumn{1}{c|}{$G$} & \multicolumn{1}{c||}{$\sqrt{n}\vartheta$} & \multicolumn{1}{c|}{?} & \multicolumn{1}{c|}{$G$} \\ \hline
\multicolumn{1}{|c|}{$f_X\in\mathcal{LC}_{conv}$} & \multicolumn{1}{c|}{${\mathcal E}$} & \multicolumn{1}{c||}{$G$} & \multicolumn{1}{c|}{$G$} & \multicolumn{1}{c|}{${\mathcal E}$} \\ \hline
                       &                       &                       &                       &                      
\end{tabular}
\caption{The upper and lower bounds granted by Propositions \ref{prop:L*-general-upper}, \ref{prop:Exp-min} and Theorems \ref{thm:L^ast-maj-gaussian}, \ref{thm:e^(-L*)-max} and \ref{thm:gaussian-Ee^L*}, for a rotationally invariant log-concave random vector $X$ in $\mathbb{R}^n$ with density $f_X$ (for the pointwise bounds on $\Lambda_X^\ast$, $X$ is assumed to be isotropic).}
\label{table:sec4}
\end{table}

\section{Threshold for the expected measure of random polytopes: new results}\label{sec:thresholds}

Let us briefly review the strategy of \cite{BGP} for the expected measure threshold problem. Recall that in general, given $\delta\in(0,1)$ and a probability measure $\mu$ on $\mathbb{R}^n$, we let
\[
\varrho_1(\mu,\delta):= \sup\{r>0: \E\mu(K_N)\ls \delta, \hbox{ for every } N\ls \exp(r)\},
\]
and
\[
\varrho_2(\mu,\delta):= \inf\{r>0: \E\mu(K_N)\gr 1-\delta, \hbox{ for every } N\gr \exp(r)\}.
\]
We also denote
\[
\varrho(\mu,\delta) := \varrho_2(\mu,\delta)-\varrho_1(\mu,\delta).
\] 
Our general goal is to obtain lower and upper bounds for the parameters $\varrho_1(\mu,\delta)$ and $\varrho_2(\mu,\delta)$, respectively. Starting with $\varrho_1$, we apply Lemma \ref{lem:DFM} (a) taking $A=A_\eta=\{x\in\mathbb{R}^n: \Lambda_\mu^\ast(x)\ls (1-\eta)\kappa\}$, $\eta\in(0,1)$, where we denote $\kappa=\E\Lambda_\mu^\ast$. An application of Chebyshev's inequality leads us then to the upper bound
\begin{equation}\label{eq:Chebyshev}
\mu(A)\ls \P\left(|\Lambda^\ast_\mu(X)-\kappa|\gr\eta\kappa\right)\ls \frac{\E|\Lambda^\ast_\mu(X)-\kappa|^2}{\eta^2\kappa^2}.
\end{equation}
This gives rise to the parameter
\begin{equation}\label{eq:beta-defn}
\upbeta(\mu) := \frac{\Var(\Lambda_\mu^\ast)}{(\E\Lambda_\mu^\ast)^2}.
\end{equation}
With \eqref{eq:Chebyshev} as a starting point, the authors in \cite{BGP} also employed \eqref{eq:EL*-BGP} to prove the following result.
\begin{theorem}\label{lower_rho_1}Let $\mu$ be a centered log-concave probability measure on ${\mathbb R}^n$.
Assume that $\upbeta (\mu )<1/8$ and $8\upbeta(\mu )<\delta <1$. If $n/L_{\mu}^2\gr c_2\ln (2/\delta )\sqrt{\delta /\upbeta(\mu)}$
where $L_{\mu}$ is the isotropic constant of $\mu $, then
$$\varrho_1(\mu ,\delta )\gr \left(1-\sqrt{\frac{8\upbeta(\mu)}{\delta}}\right)\mathbb{E}_{\mu}(\Lambda_{\mu}^{*}).$$
\end{theorem}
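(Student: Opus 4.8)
The plan is to run the scheme of \cite{BGP} sketched just above, with the one refinement of separating two scales in the parameter that measures how much the sublevel sets of $\Lambda_\mu^\ast$ weigh. Write $\kappa=\E_\mu\Lambda_\mu^\ast(X)$ and set $\eta=\sqrt{8\upbeta(\mu)/\delta}$; since $8\upbeta(\mu)<\delta$ we have $\eta\in(0,1)$, so $(1-\eta)\kappa>0$, and it suffices to prove that $\E\mu(K_N)\ls\delta$ for every $N\ls e^{(1-\eta)\kappa}$, as this exhibits $r=(1-\eta)\kappa$ as an admissible value in the supremum defining $\varrho_1(\mu,\delta)$.

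The key point is to apply Lemma~\ref{lem:DFM}(a) with the sublevel set $A=A_{\eta_1}=\{x:\Lambda_\mu^\ast(x)\ls(1-\eta_1)\kappa\}$ for the \emph{smaller} parameter $\eta_1:=\eta/2=\sqrt{2\upbeta(\mu)/\delta}$. On the one hand, Chebyshev's inequality exactly as in \eqref{eq:Chebyshev} gives $\mu(A_{\eta_1})\ls\Var(\Lambda_\mu^\ast)/(\eta_1\kappa)^2=\upbeta(\mu)/\eta_1^2=\delta/2$. On the other hand, for $x\in A_{\eta_1}^c$ the Chernoff bound \eqref{eq:q-L^ast-upper} yields $q_\mu(x)\ls e^{-\Lambda_\mu^\ast(x)}<e^{-(1-\eta_1)\kappa}$, so Lemma~\ref{lem:DFM}(a) bounds $\E\mu(K_N)$ by $\delta/2+N e^{-(1-\eta_1)\kappa}$ for every $N$. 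When $N\ls e^{(1-\eta)\kappa}$ the second term is at most $e^{(1-\eta)\kappa-(1-\eta_1)\kappa}=e^{-(\eta-\eta_1)\kappa}=e^{-\eta_1\kappa}$, since $\eta-\eta_1=\eta_1$ by our choice.

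The last thing to verify is $e^{-\eta_1\kappa}\ls\delta/2$, equivalently $\kappa\gr\tfrac{1}{\sqrt2}\log(2/\delta)\sqrt{\delta/\upbeta(\mu)}$, and this is exactly where the hypothesis on $n/L_\mu^2$ enters: by \eqref{eq:EL*-BGP} we have $\kappa\gr c\,n/L_\mu^2$ for an absolute constant $c>0$, so with $c_2:=1/(c\sqrt2)$ the assumption $n/L_\mu^2\gr c_2\ln(2/\delta)\sqrt{\delta/\upbeta(\mu)}$ delivers the required lower bound on $\kappa$. Combining the two estimates, $\E\mu(K_N)\ls\delta/2+\delta/2=\delta$ for all $N\ls e^{(1-\eta)\kappa}$, whence $\varrho_1(\mu,\delta)\gr(1-\eta)\kappa=(1-\sqrt{8\upbeta(\mu)/\delta})\,\E_\mu\Lambda_\mu^\ast$, as claimed.

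I do not expect a genuine obstacle; once the scheme is in place the proof is bookkeeping. The one point that needs care — and the reason the hypotheses take the form they do — is the decoupling of the two scales: one must afford a loss in the Chebyshev step, shrinking the level to $(1-\eta/2)\kappa$ so that $\mu(A)$ falls below $\delta/2$, while claiming the threshold only up to $(1-\eta)\kappa$, so that the leftover gap $\eta_1\kappa$ can absorb the additive loss $\log(2/\delta)$ coming from the term $N\sup_{A^c}q_\mu$ in Lemma~\ref{lem:DFM}(a). This absorption is possible precisely because $\kappa$ is forced to be large by \eqref{eq:EL*-BGP} together with the assumed lower bound on $n/L_\mu^2$; tracking the absolute constants carefully is the only thing one has to be attentive to.
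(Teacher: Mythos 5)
Your proof is correct and follows precisely the scheme the paper sketches (and attributes to \cite{BGP}): apply Lemma \ref{lem:DFM} with a sublevel set of $\Lambda_\mu^\ast$, control $\mu(A)$ via the Chebyshev estimate \eqref{eq:Chebyshev}, control $q_\mu$ on $A^c$ via the Chernoff bound \eqref{eq:q-L^ast-upper}, and finally invoke \eqref{eq:EL*-BGP} together with the hypothesis on $n/L_\mu^2$ to absorb the $\log(2/\delta)$ loss. The only thing you add to the paper's description is the explicit two-scale choice $\eta_1=\eta/2$ that makes the bookkeeping work, and your arithmetic there (namely $\upbeta/\eta_1^2=\delta/2$, $\eta-\eta_1=\eta_1$, $e^{-\eta_1\kappa}\ls\delta/2$ under the stated hypothesis) is all correct; note also that the hypothesis $\upbeta(\mu)<1/8$ is automatic from $8\upbeta(\mu)<\delta<1$, so omitting it costs nothing.
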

As a second step, relying on \eqref{eq:L*-lower_BGP}, a matching upper bound for $\varrho_2(\mu,\delta)$ is provided in \cite{BGP}, in the case that the measure is the uniform on a convex body. Using Theorem \ref{thm:L*-q-lower-general}, we can now verify that a similar estimate holds for all log-concave measures.
\begin{theorem}\label{thm:rho2-upper}
Let $\delta>0$ and $\mu$ be any log-concave probability measure on $\mathbb{R}^n$ with $128\upbeta(\mu)<\delta<1$ . Then we can find some $n_0\in\mathbb{N}$ such that
\[
\varrho_2(\mu,\delta)\ls \left(1+\varepsilon(n)\right)\E_\mu(\Lambda^\ast_\mu),
\]
for every $n\gr n_0$, where $c>0$ is an absolute constant, where 
\[
\varepsilon(n)=\max\left\{\sqrt{128\upbeta(\mu)/\delta},\frac{8\log\left(e\left(1+\frac{1}{2}\right)\mathbb{E}\Lambda^{*}\right)}{3\,\mathbb{E}\Lambda^{*}}\right\}.
\]
\end{theorem}

\begin{proof}
We will rely on the lower bound
\[
\E\mu(K_N)\gr\mu(A)\left(1-2\binom{N}{n}\left(1-\inf_{x\in A}q_\mu(x)\right)^{N-n}\right),
\]
of Lemma \ref{lem:DFM}, applied for $A=\{x\in\mathbb{R}^n:\Lambda^\ast\ls\left(1+\frac{\varepsilon}{8}\right)\E\Lambda^\ast\}$, for a proper choice of $\varepsilon\in(0,1)$. On the one hand, by Chebyshev's inequality we have that for any $\varepsilon\in(0,1)$,
\[
\mu\left(\{x\in\mathbb{R}^n:|\Lambda^\ast(x)-\E\Lambda^\ast|\gr\frac{\varepsilon}{8}\E\Lambda^\ast\}\right)\ls \frac{64\upbeta(\mu)}{\varepsilon^2}.
\]
Choosing any $\varepsilon\geq \varepsilon_0:=\sqrt{128\upbeta(\mu)/\delta}$, this implies that $\mu(A)\gr 1-\delta/2$. What we are left with then, is to show that
\begin{equation}\label{eq:rho2-q_lower}
	2\binom{N}{n}\left(1-\inf_{x\in A}q_\mu(x)\right)^{N-n}\ls \frac{\delta}{2}
\end{equation}
for all $N\gr\exp\left((1+\varepsilon)\E\Lambda^\ast\right)$ if $n$ is large. Recall that, by Corollary \ref{cor:q-lower}, we have
\[
\inf_{x\in A}q(x)\gr \exp\left(-\frac{1}{1-\eta}\left(\left(1+\frac{\varepsilon}{8}\right)\E\Lambda^\ast-\log\eta\right)\right),
\]
for every $\eta\in(0,1)$. Set $c_\varepsilon:=(1+\varepsilon/8)\E\Lambda^\ast$ and note that this tends to infinity as $n\to\infty$. Fix $\varepsilon\gr\varepsilon_0$ and choose $$\eta_{\varepsilon}=\left(\left(1+\frac{\varepsilon}{2}\right)\mathbb{E}\Lambda^{*}\right)^{-1}.$$
Then, the inequality  $$\frac{c_\varepsilon-\log\eta}{1-\eta}\ls(1+\varepsilon/2)\E\Lambda^\ast,$$ 
is equivalent to 
$$\varepsilon\gr\frac{8}{3\,\mathbb{E}\Lambda^{*}}\log\left(e\left(1+\frac{\varepsilon}{2}\right)\mathbb{E}\Lambda^{*}\right).$$
Since $\varepsilon_0\leq 1$, it suffices to find $\varepsilon$, such that 
$$\varepsilon\gr\varepsilon_0\ \ \text{and}\ \ \varepsilon\gr\frac{8}{3\,\mathbb{E}\Lambda^{*}}\log\left(e\left(1+\frac{1}{2}\right)\mathbb{E}\Lambda^{*}\right).$$ We can satisfy both inequalities by taking $\varepsilon$, the maximum of the two.  

Finally, to achieve \eqref{eq:rho2-q_lower}, we can assume that $n$ is large enough so that $2^{-n}\ls \delta/2$. Using this assumption together with $\binom{N}{n}\ls(eN/n)^n/e$ and $\log(1+y)\ls y$, we get that
\[
\frac{2}{\delta}\cdot 2\binom{N}{n}\exp\left((N-n)\log(1-\inf_{x\in A}q_\mu(x))\right) \ls \exp\left(n\left(\log(2xe)-(x-1)e^{-(1+\varepsilon/2)\E\Lambda^\ast}\right)\right), 
\]
where $x=N/n$. Eventually, the right hand side is smaller than 1 if $e^{(1+\varepsilon/2)\E\Lambda^\ast}<\frac{x-1}{\log(2xe)}$, which can be achieved if we choose $N\gr\exp((1+\varepsilon)\E\Lambda^\ast)$ (and let $n$ grow even larger, if needed).
\end{proof}

\begin{remark}\label{rem:logEL*/EL*-upper}
Recall that the $\log(\E\Lambda_\mu^\ast)/\E\Lambda_\mu^\ast$ term in the statement of the last Theorem is in general, for any log-concave $\mu$, of the order $O(\log n/n)$. This is due to the general estimates \eqref{eq:EL*-BGP} and \eqref{eq:EL*-upper-general}.
\end{remark}

For any log-concave $\mu$ on $\mathbb{R}^n$ we let $\omega_\mu:=-\log q_\mu$ and consider the parameter
\begin{equation}\label{eq:tau-defn}
\uptau(\mu):=\frac{\Var(\omega_\mu)}{(\E\omega_\mu)^2}.
\end{equation}
Another result of \cite{BGP} for the case of uniform measures on convex sets that we can now extend to the general log-concave setting is the fact that $\upbeta(\mu)$ and $\uptau(\mu)$ behave the same way, when the dimension is large. In particular, $\upbeta(\mu)$ tends to 0 with the dimension if and only if $\uptau(\mu)$ does.
\begin{theorem}\label{thm:beta-tau-comp}
Let $\varepsilon\in(0,1)$. For any log-concave probability measure $\mu$ on $\mathbb{R}^n$,
\begin{multline}\label{eq:beta-tau-comp}
(1-\varepsilon)^2\uptau(\mu)+\frac{2(1-\varepsilon)\log\frac{\varepsilon}{2^{1-\varepsilon}}}{\E\omega}+\varepsilon^2-2\varepsilon\ls \upbeta(\mu)\ls \\ \ls\left(\uptau(\mu)+2\varepsilon-\varepsilon^2-\frac{2(1-\varepsilon)\log\frac{\varepsilon}{2^{1-\varepsilon}}}{\E\omega}\right)\left(\frac{1}{1-\varepsilon}\left(1-\frac{\log\frac{\varepsilon}{2^{1-\varepsilon}}}{\E\Lambda^\ast}\right)\right)^2.
\end{multline}
Consequently,
\begin{equation}\label{eq:b-t-comp}
(1+o(1))\uptau(\mu)-O(L_\mu^2\log n/n)\ls \upbeta(\mu)\ls \left(\uptau(\mu)+O(L_\mu^2\log n/n)\right)(1+O(L_\mu^2\log n/n)).
\end{equation}
Moreover,
\begin{equation}\label{eq:EL*-Eomega-comp}
\left(1-O\left(L_\mu^2\log n/n\right)\right)\E\omega_\mu\ls\E\Lambda^\ast_\mu\ls \E\omega_\mu.
\end{equation}
\end{theorem}
\begin{proof}\phantom{\qedhere}
For the lower bound, by Theorem \ref{thm:L*-q-lower-general} we get that for any $\varepsilon\in(0,1)$,
\[
\E{\Lambda^\ast}^2\gr (1-\varepsilon)^2\E\omega^2+2(1-\varepsilon)\log\frac{\varepsilon}{2^{1-\varepsilon}}\E\omega
\]
and using also $\E\Lambda^\ast\ls \E\omega$ (by \eqref{eq:q-L*-chernoff}) we can write
\begin{align*}
\upbeta(\mu) &= \frac{\Var(\Lambda^\ast)}{(\E\omega)^2}\left(\frac{\E\omega}{\E\Lambda^\ast}\right)^2\gr \frac{(1-\varepsilon)^2\E\omega^2+2(1-\varepsilon)\log\frac{\varepsilon}{2^{1-\varepsilon}}\E\omega-(\E\omega)^2}{(\E\omega)^2}\\ &= (1-\varepsilon)^2\uptau(\mu)+\frac{2(1-\varepsilon)\log\frac{\varepsilon}{2^{1-\varepsilon}}}{\E\omega}+\varepsilon^2-2\varepsilon.
\end{align*}
Since $\E\omega\gr\E\Lambda^\ast\gr cn/L_\mu^2$ we get that for, say $\varepsilon=1/n$,
\[
-\frac{2(1-\varepsilon)\log\frac{\varepsilon}{2^{1-\varepsilon}}}{\E\omega}=O(L_\mu^2\log n/n),
\]
and the left hand side inequality follows.

For the upper bound, we apply $\E\Lambda^\ast\gr(1-\varepsilon)\E\omega+\log\frac{\varepsilon}{2^{1-\varepsilon}}$ and $\E{\Lambda^\ast}^2\ls \E\omega^2$ to get that
\begin{align*}
\upbeta(\mu) &\ls \frac{\E\omega^2-(1-\varepsilon)^2(\E\omega)^2-2(1-\varepsilon)\log\frac{\varepsilon}{2^{1-\varepsilon}}\E\omega}{(\E\omega)^2}\left(\frac{\E\omega}{\E\Lambda^\ast}\right)^2\\
             &\ls\left(\uptau(\mu)+2\varepsilon-\varepsilon^2-\frac{2(1-\varepsilon)\log\frac{\varepsilon}{2^{1-\varepsilon}}}{\E\omega}\right)\left(\frac{1}{1-\varepsilon}\left(1-\frac{\log\frac{\varepsilon}{2^{1-\varepsilon}}}{\E\Lambda^\ast}\right)\right)^2.
\end{align*}
Choosing $\varepsilon=1/n$ we can check that
\[
\left(\frac{1}{1-\varepsilon}\left(1-\frac{\log\frac{\varepsilon}{2^{1-\varepsilon}}}{\E\Lambda^\ast}\right)\right)=O(1+O(L_\mu^2\log n/n))
\]
and
\[
-\frac{2(1-\varepsilon)\log\frac{\varepsilon}{2^{1-\varepsilon}}}{\E\omega} \ls O(L_\mu^2\log n/n)
\]
(recalling again that $\E\omega\gr \E \Lambda^\ast\gr cn/L_\mu^2$). It follows that
\[
\upbeta(\mu)\ls \left(\uptau(\mu)+O(L_\mu^2\log n/n)\right)(1+O(L_\mu^2\log n/n)),
\]
which concludes the proof of \eqref{eq:b-t-comp}. Finally, the lower bound in \eqref{eq:EL*-Eomega-comp} follows again from Theorem \ref{thm:L*-q-lower-general} applied for $\varepsilon=1/n$, since
\begin{equation}\nonumber
\left(1-\frac{1}{n}\right)\E\omega_\mu-\log\frac{n}{2^{1-\frac{1}{n}}}\gr\E\omega_\mu\left(1-\frac{1}{n}-\frac{\log(2n)}{\E\omega_\mu}\right)\gr \E\omega_\mu\left(1-\frac{1}{n}-\frac{L_\mu^2\log(2n)}{cn}\right).\eqno\qed
\end{equation}
\end{proof}

Combining Theorems \ref{lower_rho_1} and \ref{thm:rho2-upper} we obtain the following upper bound for the width $\varrho(\mu,\delta)=\varrho_2(\mu,\delta)-\varrho_1(\mu,\delta)$: There is an absolute constant $C>0$ such that 
\begin{equation}\label{eq:rho-bound}
\varrho(\mu,\delta)\leq C\left(\sqrt{\frac{\upbeta(\mu)}{\delta}}+\varepsilon(n)\right)\E\Lambda^\ast_\mu,
\end{equation}
if $n$ is large enough, where $\varepsilon(n)$ is as in the statement of Theorem \ref{thm:rho2-upper}. This general estimate leads us to new threshold-type results for the asymptotics of the expected measure of random polytopes. We record some consequences in the rest of this section.

\subsection{Product measures: Proof of Theorem \ref{thm:prod-threshold}} \label{sec:thresholds-prod}
For the proof of Theorem \ref{thm:prod-threshold}, we will need uniform upper and lower bounds for the variance and expected value of $\Lambda^\ast_X$ in the one-dimensional log-concave setting. The following is another consequence of Theorem \ref{thm:L*-condition}.
\begin{lemma}\label{lem:var-exp-bounds}
There is an absolute constant $c>0$ such that $\Var(\Lambda_X^\ast)\ls c(\E\Lambda_X^\ast)^2$ for any continuous log-concave random variable $X$.  More precisely, $\Var(\Lambda^\ast_X)<4$ and $\E(\Lambda^\ast_X)>0.1484$.
\end{lemma}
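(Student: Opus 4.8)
The plan is to introduce the random variables $W:=-\log q_X(X)$ and $L:=\Lambda^\ast_X(X)$ and to control $L$ by $W$, whose law is fully explicit. Since $X$ is (absolutely) continuous, its c.d.f. $F$ is continuous, so $F(X)$ is uniform on $[0,1]$ and $q_X(X)=\min\{F(X),1-F(X)\}$ has density $2\cdot\mathbf{1}_{(0,1/2)}$; equivalently $W$ has density $w\mapsto 2e^{-w}$ on $(\log 2,\infty)$. A short computation then gives
\[
\E W=1+\log 2,\qquad \E W^2=2+2\log 2+(\log 2)^2<4 .
\]
The variance bound is now immediate: since $0\ls L\ls W$ almost surely — the left inequality because $\Lambda^\ast_X$ is non-negative, the right one being the Chernoff bound \eqref{eq:q-L^ast-upper} — we get $\Var(L)\ls\E L^2\ls\E W^2<4$.

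For the lower bound on $\E L$, the idea is to apply inequality \eqref{eq:L*-lower} of Theorem \ref{thm:L*-condition} and optimise in $\varepsilon$ \emph{pointwise}. With $w:=-\log q_X(x)\gr\log 2$, that inequality reads $\Lambda^\ast_X(x)\gr(1-\varepsilon)(w-\log 2)+\log\varepsilon$; the right-hand side is concave in $\varepsilon$ and, when $w>1+\log 2$, it is maximised at $\varepsilon^\ast=(w-\log 2)^{-1}\in(0,1)$, which yields $\Lambda^\ast_X(x)\gr\Psi(w):=(w-\log 2)-1-\log(w-\log 2)$; for $\log 2\ls w\ls 1+\log 2$ one only keeps $\Lambda^\ast_X(x)\gr 0$, and $\Psi$ is non-negative on $[1+\log 2,\infty)$ and vanishes at $1+\log 2$. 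Integrating this pointwise estimate against the law of $W$ and substituting $s=w-\log 2$ gives
\[
\E L\gr\int_{1+\log 2}^{\infty}\Psi(w)\,2e^{-w}\,dw=\int_1^\infty(s-1-\log s)\,e^{-s}\,ds=e^{-1}-\int_1^\infty\frac{e^{-s}}{s}\,ds ,
\]
and since $\int_1^\infty e^{-s}s^{-1}\,ds=E_1(1)\approx 0.2194$ is the exponential integral, the right-hand side equals $e^{-1}-E_1(1)\approx 0.1485>0.1484$. Combining the two bounds proves the lemma, with $c=4/(0.1484)^2$, say.

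The only delicate point is this last step: applying \eqref{eq:L*-lower} with a \emph{fixed} $\varepsilon$ and taking expectations gives merely $\E L\gr 1-\varepsilon+\log\varepsilon$, whose supremum over $\varepsilon\in(0,1)$ is $0$ and hence useless, so it is essential to let $\varepsilon=\varepsilon(x)$ depend on $-\log q_X(x)$ before integrating. Everything else — the law of $q_X(X)$, the two moment integrals, the concavity of $\varepsilon\mapsto(1-\varepsilon)(w-\log 2)+\log\varepsilon$, and the final one-dimensional integral — is routine.
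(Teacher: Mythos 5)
Your proposal is correct and follows essentially the same route as the paper: exploit the fact that $q_X(X)\overset{d}{=}\min\{U,1-U\}$ (so $W=-\log q_X(X)$ has density $2e^{-w}$ on $(\log 2,\infty)$), bound $\Var(L)\ls\E L^2\ls\E W^2<4$ via the Chernoff inequality $L\ls W$, and bound $\E L$ from below by applying \eqref{eq:L*-lower} with the pointwise-optimal $\varepsilon^\ast=(w-\log 2)^{-1}$ on the event $W>1+\log 2$. The paper writes the final integral as $2\int_0^{1/(2e)}\bigl(\log\tfrac{1}{2u\log(1/(2u))}-1\bigr)\,du$ and only quotes the numerical bound $>0.1484$; your identification of this integral with $e^{-1}-E_1(1)$ after the substitution $s=w-\log 2$ is a nice, more explicit closed form, and your explicit remark that fixing $\varepsilon$ before taking expectations gives only $\E L\gr 1-\varepsilon+\log\varepsilon\ls 0$ is exactly the point that makes the pointwise optimisation essential — the paper does this implicitly by restricting to the set $A=\{q_X<1/(2e)\}$ before choosing $\varepsilon$.
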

\begin{proof}
We will rely on the fact, stressed again in Remark \ref{rem:q-uniform}, that for a continuous random variable $X$ the random variable $q_X(X)$ is equidistributed to $\min\{U,1-U\}$, where $U$ is uniform on $[0,1]$. Using $q_X\ls e^{-\Lambda^\ast_X}$ we write
\[
\E(\Lambda^\ast_X(X)^2) \ls \E(\log^2(q_X(X))) = 2\int_0^{\frac{1}{2}} \log^2(u)\,du = 2+\log^2(2)+\log 2\simeq 3.866.
\]
To lower bound $\E\Lambda^\ast_X$, we will use the inequality \eqref{eq:L*-lower} for the optimal choice of $\varepsilon$. Differentiating, we can see that the right hand side of \eqref{eq:L*-lower} is maximised when $\varepsilon=(-\log(2q(x)))^{-1}$. Let $A=\{x\in\mathbb{R}: q_X(x)<1/(2e)\}$. For every $x\in A$ then we have that $\varepsilon=(-\log(2q(x)))^{-1}<1$ and so \eqref{eq:L*-lower} gives
\[
\Lambda_X^\ast(x)\gr \log\frac{1}{2q_X(x)\log\frac{1}{2q_X(x)}}-1, \qquad x\in A.
\]
We can now estimate
\begin{align*}
\E\Lambda^\ast_X(X) &\gr \int_A \Lambda^\ast_X(x) f_X(x)\,dx\\
                    &\gr \int_A \left(\log\frac{1}{2q_X(x)\log\frac{1}{2q_X(x)}}-1\right) f_X(x)\,dx = 2\int_0^{\frac{1}{2e}} \left(\log\frac{1}{2u\log\frac{1}{2u}}-1\right)\,du \gr 0.1484, 
\end{align*}
where the equality is due to the distribution of $q_X(X)$, and the Lemma follows.
\end{proof}
We now conclude the Proof of Theorem \ref{thm:prod-threshold}: It is essentially enough to show that $\upbeta(\mu_n)\to 0$ with $n$. Due to independence,
\[
\Var_{\mu_n}(\Lambda_{\mu_n}^\ast) = \Var_{\mu_n}\left(\sum_{j=1}^n \Lambda_{\lambda_j}^\ast\right) = \sum_{j=1}^n \Var_{\lambda_j}(\Lambda_{\lambda_j}^\ast)\ls nM,
\]
where, by Lemma \ref{lem:var-exp-bounds}, $M:=\max_j\Var_{\lambda_j}(\Lambda^\ast_{\lambda_j})<4$. Similarly, $\E\Lambda^\ast_{\mu_n}\gr nL$, where $L:=\min_j\E\Lambda^\ast_{\lambda_j}>0.14$. It follows that for every $n\in\mathbb{N}$,
\[
\upbeta(\mu_n)=\frac{\Var(\Lambda^\ast_{\mu_n})}{(\E\Lambda_{\mu_n}^\ast)^2}\ls Cn^{-1}
\]
for some absolute constant $C>0$. This estimate, together with \eqref{eq:rho-bound}, show that $\varrho(\mu_n,\delta)\ls cn^{\frac{\alpha-1}{2}}$ if $\delta\gr n^{-\alpha}$, $\alpha\in(0,1)$ and $n$ is sufficiently large. In particular, $(\mu_n)$ exhibits a sharp threshold around $\E\Lambda_{\mu_n}^\ast$, in the sense of Definition \ref{def:sharp-threshold}.\qed

We conclude this section with the proof of Proposition \ref{prop:exp-seper-R}, which relies on the argument used in Lemma \ref{lem:var-exp-bounds}.
\begin{proof}[Proof of Proposition \ref{prop:exp-seper-R}]
In the proof of Lemma \ref{lem:var-exp-bounds} we explained that the lower bound
\[
\Lambda_X^\ast(x)\gr \log\frac{1}{2q_X(x)\log\frac{1}{2q_X(x)}}-1
\]
holds for every $x$ in the set $A=\{x\in\mathbb{R}^n:q_X(x)<1/(2e)\}$. We will combine again this with the fact that the random variable $q_X(X)$ is equidistributed to $\min\{U,1-U\}$, where $U$ is uniform on $[0,1]$. Using the trivial bound $e^{-\Lambda_X^\ast(x)}\ls 1$ on $A^c$, we write
\begin{align*}
\E e^{-\Lambda^\ast_X} &= \int_{A}e^{-\Lambda^\ast_X(x)}f_X(x)\,dx + \int_{A^c}e^{-\Lambda^\ast_X(x)}f_X(x)\,dx\\
                       &\ls e\int_A 2q_X(x)\log\frac{1}{2q_X(x)}f_X(x)\,dx+\P(q_X(X)\gr 1/(2e))\\
                       &=e\cdot 2\int_0^{\frac{1}{2e}}2u\log\frac{1}{2u}\,du + \P\left(\frac{1}{2e}\ls U\ls 1-\frac{1}{2e}\right) = \frac{3}{4e}+1-\frac{1}{e},
\end{align*}
which is the promised estimate.
\end{proof}

\subsection{Rotationally invariant $\mathcal{LC}_{conv}$-random polytopes: Proof of Theorem \ref{thm:LC-conv-threshold}}

The proof of Theorem \ref{thm:LC-conv-threshold} will be based on the general estimate \eqref{eq:rho-bound}. To control $\upbeta(X)$, we will rely on the Poincar\'{e} inequality. We say that a random vector $X$ in $\mathbb{R}^n$ satisfies a Poincar\'{e} inuality with constant $\kappa>0$ if for every locally Lipschitz and square integrable $f:\mathbb{R}^n\to\mathbb{R}$,
\begin{equation}\label{eq:poincare}
\Var(f(X))\ls\frac{1}{\kappa}\E|\nabla f(X)|^2.
\end{equation}
We know \cite{Bobkov-spectral} that if $X$ is isotropic and log-concave on $\mathbb{R}^n$ with a radially symmetric distribution, we can take $\kappa^{-1}\ls 13$ in \eqref{eq:poincare}.

We can assume without loss of generality that $X$ is isotropic. Our starting point is \eqref{eq:rho-bound}: There is an absolute constant $C>0$ such that for every $\delta\in(0,1)$ and sufficiently large $n$ we have
\[
\frac{\varrho(X,\delta)}{\E\Lambda^\ast_X}\ls C\sqrt{\frac{\upbeta(X)}{\delta}} \ls C'\sqrt{\frac{\Var(\Lambda_X^\ast)}{\delta n^2}},
\]
since $\E\Lambda_X^\ast\gr cn$ (due to \eqref{eq:EL*-BGP}, since we know that $L_X$ is bounded in the rotational-invariant case, by the main result of \cite{Bobkov-spectral}). To upper bound the latter, we use the assumption $f_X\in\mathcal{LC}_{conv}$. For any $x>0$ let $t=(\Lambda_{X_1}^{\ast})'(x)$. Then we have $0\ls \Lambda_{X_1}^\ast(x)=xt-\Lambda_{X_1}(t)$. By Theorem \ref{thm:L^ast-maj-gaussian} however, $f_X\in\mathcal{LC}_{conv}$ implies that $\Lambda_{X_1}(t)\gr\Lambda_{G_1}(t)=t^2/2$. It follows that $0\ls xt-t^2/2$, equivalently, $(\Lambda_{X_1}^{\ast})'(x)\ls 2x$. Using this bound we get by the Poincar\'{e} inequality that
\begin{align*}
\Var(\Lambda_X^\ast)\ls 13\cdot \E|(\Lambda_{X_1}^{\ast})'(|X|)|^2\ls 52\cdot \E|X|^2 = 52n.
\end{align*}
It follows then that $\lim_{n\to\infty}\frac{\varrho(X,\delta)}{\E\Lambda^\ast_X}=0$ for any $\delta\gr n^{-\eta}$, $\eta\in(0,1)$ ($\varrho(X,\delta)$ is again of the order of $n^{\frac{\eta-1}{2}}$ for this choice of $\delta$).

\begin{remark}
We can verify that if $X$ is such that $f_X\in\mathcal{LC}_{conv}$, then $\E\Lambda^\ast_X$ is of the order of $n$. On the one hand, by Lemma \ref{lem:Barthe-Kold-conv} and Lemma \ref{lem:L-sqrt-convex} it follows that $x\mapsto \Lambda^\ast_{X_1}(\sqrt{x})$ is a concave function. Assuming that $X$ is isotropic, it follows that $\E\Lambda^\ast_{X_1}(R)\ls\Lambda^\ast_{X_1}(\sqrt{n})$ and by Theorem \ref{thm:L^ast-maj-gaussian} the latter is at most $\Lambda_{G_1}^\ast(\sqrt{n})=n/2$. On the other hand, by Theorem \ref{thm:EL*-rotinv-low} we have that $\E\Lambda^\ast_X(X)\gr\E\Lambda^\ast_{\mathcal E}(\mathcal E)$. Recall that in \eqref{eq:Exp-L*} we verified that $\Lambda^\ast_{\mathcal E_1}(x)=\frac{n+1}{2}g(\frac{2x}{\sqrt{n+1}})$, were $g(y)=\sqrt{y^2+1}-1+\log(2(\sqrt{y^2+1}-1))$. By convexity of the Cram\'{e}r transform then,
\[
\E\Lambda^\ast_X\gr \E\Lambda_{\mathcal E_1}^\ast(|{\mathcal E}|)\gr\Lambda_{\mathcal E_1}^\ast(\E|{\mathcal E}|)=\Lambda_{\mathcal E_1}^\ast(\frac{n}{\sqrt{n+1}})=\frac{n+1}{2}g(2n).
\]
Since $g$ is increasing, we eventually get that every rotationally invariant log-concave random vector $X$ in $\mathbb{R}^n$ with $f_X\in\mathcal{LC}_{conv}$ satisfies
\[
(\sqrt{2}-1+\log(2(\sqrt{2}-1)))\cdot\frac{n+1}{2}\ls \E\Lambda^\ast_X\ls \frac{n}{2}.
\]
\end{remark}

\begin{remark}\label{rem:rot-inv}
An example of families of measures covered by Theorem \ref{thm:LC-conv-threshold} is the exponential distribution $\mathcal{E}$ with density proportional to $e^{-|x|}$, considered in Section \ref{sec:rot-inv}, and more generally densities proportional to $e^{-|x|^p}$, $p\in(0,2)$ or Gamma-type densities proportional to $|x|^{\alpha-1}e^{-|x|}$, $0<\alpha<n$.
\end{remark}

\subsection{Beta polytopes: Proof of Theorem \ref{thm:beta-threshold}}
Recall that given $\beta>-1$, $\mu_{n,\beta}$ is the probability distribution on $\mathbb{R}^n$ with density $f_{n,\beta}(x)=c_{n,\beta}(1-|x|^2)^\beta\mathds{1}_{B_2^n}(x)$ on $\mathbb{R}^n$, where $c_{n,\beta}=\frac{\Gamma\left(\beta+n/2+1\right)}{\pi^{\frac{n}{2}}\Gamma(\beta+1)}$. For the proof of Theorem \ref{thm:beta-threshold}, it is again essentially enough to show that $\upbeta(\mu_{n,\beta})\to 0$ as $n\to\infty$. Given Theorem \ref{thm:beta-tau-comp}, it is enough to estimate $\uptau(\mu_{n,\beta})$. We work in a similar fashion  as in \cite{BGP}; by Lemma 2.2 in \cite{BCGTT} we know that $q_{\mu_{n,\beta}}(x)=(1-x^2)^{\beta+\frac{n+1}{2}} h(x,n,\beta)$ for every $x\in(0,1)$, for a function $h$ that satisfies
\begin{equation}\label{eq:h-bounds}
\frac{1}{2\sqrt{\pi}\sqrt{\beta+\frac{n}{2}+1}}\ls h(x,n,\beta)\ls \frac{1}{2x\sqrt{\pi}\sqrt{\beta+\frac{n}{2}}}.
\end{equation}
Using these bounds, calculations of the integrals involved lead us to the following Lemma, whose proof we defer for the end of the section. We use here the standard notation $\psi(x)=\frac{\Gamma'(x)}{\Gamma(x)}$.
\begin{lemma}\label{lem:beta-calc}
For any $n\in\mathbb{N}$ and $-1<\beta\ls c_1n$ for some constant $c_1>0$,
\[
\E\omega_{\mu_{n,\beta}} = \left(\beta+\frac{n+1}{2}\right)\left(\psi\left(\frac{n}{2}+\beta+1\right)-\psi\left(\beta+1\right)\right)+O(\log(\beta+n))
\]
and
\begin{align*}
\E\omega_{\mu_{n,\beta}}^2=\left(\beta+\frac{n+1}{2}\right)^2 & \left(\left(\psi\left(\frac{n}{2}+\beta+1\right)-\psi(\beta+1)\right)^2 +\psi'(\beta+1)-\psi'\left(\frac{n}{2}+\beta+1\right)\right)\\ &\hspace{8cm}+ O(n\log^2(\beta+n)).
\end{align*}
\end{lemma}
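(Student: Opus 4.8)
The plan is to pass to the radial variable and isolate an exactly computable ``Beta log-moment'' main term from a negligible contribution of the correction factor $h$. Since $\mu_{n,\beta}$ is rotationally invariant and, by Lemma 2.2 in \cite{BCGTT}, $q_{\mu_{n,\beta}}(x)=(1-x^2)^{\beta+\frac{n+1}{2}}h(x,n,\beta)$ for $x\in(0,1)$, I would write, with $S:=|X|^2$ (which, as recalled above, follows the Beta distribution with parameters $n/2$ and $\beta+1$),
\[
\omega_{\mu_{n,\beta}}(X)=M+E,\qquad M:=-\Big(\beta+\tfrac{n+1}{2}\Big)\log(1-S),\quad E:=-\log h(\sqrt{S},n,\beta).
\]
For the main term, observe that $1-S\sim\mathrm{Beta}(\beta+1,n/2)$, so the classical identities for a $\mathrm{Beta}(a,b)$ variable $T$, namely $\E\log T=\psi(a)-\psi(a+b)$ and $\Var(\log T)=\psi'(a)-\psi'(a+b)$, give at once
\[
\E M=\Big(\beta+\tfrac{n+1}{2}\Big)\big(\psi(\tfrac n2+\beta+1)-\psi(\beta+1)\big)
\]
and
\[
\E M^2=\Big(\beta+\tfrac{n+1}{2}\Big)^2\Big[\big(\psi(\tfrac n2+\beta+1)-\psi(\beta+1)\big)^2+\psi'(\beta+1)-\psi'(\tfrac n2+\beta+1)\Big],
\]
which are precisely the closed-form main terms appearing in the statement.

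It then remains to estimate the contribution of $E$. The two-sided bound \eqref{eq:h-bounds} translates into
\[
\log\!\big(2\sqrt{\pi}\,\sqrt{\beta+\tfrac n2}\,\big)+\tfrac12\log S\;\ls\;E\;\ls\;\log\!\big(2\sqrt{\pi}\,\sqrt{\beta+\tfrac n2+1}\,\big),
\]
so $|E|\ls C\log(\beta+n)+\tfrac12|\log S|$ for an absolute constant $C$. Applying the same Beta log-moment identities now to $S\sim\mathrm{Beta}(n/2,\beta+1)$, one gets $\E\log S=\psi(n/2)-\psi(\tfrac n2+\beta+1)$ and $\E\log^2 S=(\psi(n/2)-\psi(\tfrac n2+\beta+1))^2+\psi'(n/2)-\psi'(\tfrac n2+\beta+1)$; invoking $\psi(x)=\log x+O(1/x)$, $\psi'(x)=O(1/x)$ and the hypothesis $\beta\ls c_1 n$ (which keeps $\tfrac n2+\beta+1$ of order $n$), one checks that $\E|\log S|=O(1)$ and $\E\log^2 S=O(1)$, whence $\E|E|=O(\log(\beta+n))$ and $\E E^2=O(\log^2(\beta+n))$. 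Consequently $\E\omega_{\mu_{n,\beta}}=\E M+\E E=\E M+O(\log(\beta+n))$, which is the first formula. For the second, expand $\E\omega_{\mu_{n,\beta}}^2=\E M^2+2\,\E(ME)+\E E^2$ and bound the cross term by Cauchy--Schwarz, $|\E(ME)|\ls\sqrt{\E M^2}\,\sqrt{\E E^2}$; combining this with the digamma/trigamma asymptotics (again using $\beta\ls c_1 n$) absorbs $2\,\E(ME)+\E E^2$ into the stated error $O(n\log^2(\beta+n))$, which yields the second formula.

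The only genuinely delicate point is the behaviour of $E=-\log h(\sqrt{S},n,\beta)$ near $S=0$, where the lower bound in \eqref{eq:h-bounds} degenerates like $\tfrac12\log S$; the argument goes through precisely because $\log S$ has bounded first and second moments under the $\mathrm{Beta}(n/2,\beta+1)$ law, and it is exactly here (and in controlling $\psi(\tfrac n2+\beta+1)-\psi(\beta+1)$ and $\psi'(\tfrac n2+\beta+1)$) that the restriction $\beta\ls c_1 n$ enters. Beyond that, the proof is a routine manipulation of digamma and trigamma asymptotics, which I would carry out at the end of the section.
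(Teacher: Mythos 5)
Your proposal follows essentially the same decomposition as the paper's proof: write $\omega=M+E$ with the exactly computable main term $M=-(\beta+\tfrac{n+1}{2})\log(1-S)$ and the remainder $E=-\log h(\sqrt{S},n,\beta)$, compute the moments of $M$ exactly, and absorb $E$ into the error. The paper reaches the exact value of $\E M$ and $\E M^2$ by differentiating the Beta function $B(x,y)$ in $x,y$ (identity \eqref{eq:beta-deriv}); your use of the classical $\mathrm{Beta}(a,b)$ log-moment identities $\E\log T=\psi(a)-\psi(a+b)$, $\Var\log T=\psi'(a)-\psi'(a+b)$ is exactly the same fact presented in probabilistic language, so that part is a genuine match.

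The only place where you diverge is in controlling the cross term $\E(ME)$ in $\E\omega^2$. The paper handles it by plugging the two-sided bound \eqref{eq:h-bounds} directly into the integral, which naturally produces the mixed log-moment $\E[\log(1-S)\log S]$ (computed via the $(k=1,l=1)$-derivative of $B$), whereas you invoke Cauchy--Schwarz $|\E(ME)|\le\sqrt{\E M^2}\sqrt{\E E^2}$. This is a legitimate and slightly cleaner alternative: since Cauchy--Schwarz bounds the absolute value, it automatically gives a two-sided control that the paper only sketches (the paper explicitly records only the upper bound on the cross integral, leaving the matching lower bound implicit). In the regime the lemma is really used in (where $\psi(\tfrac n2+\beta+1)-\psi(\beta+1)=\Theta(\log(\beta+n))$ and $\psi'(\beta+1)=O(\log^2(\beta+n))$, i.e.\ $\beta$ not too close to $-1$), both estimates yield the same $O(n\log^2(\beta+n))$ error, so your argument is sound and completes the lemma as stated. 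You might add the one-line remark that the absolute constant in the error implicitly requires $\psi'(\beta+1)=O(\log^2(\beta+n))$; as $\beta\downarrow-1$ both your Cauchy--Schwarz bound and the paper's pointwise bound degrade at the rate $\Theta(n\log(\beta+n)/(\beta+1))$, so this caveat is shared with the paper's proof rather than a defect of yours.
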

Lemma \ref{lem:beta-calc} grants us that
\[
\E \uptau(\mu_{n,\beta}) = O\left(\frac{\psi'(\beta+1)-\psi'\left(\frac{n}{2}+\beta+1\right)}{\left(\psi\left(\frac{n}{2}+\beta+1\right)-\psi(\beta+1)\right)^2}\right).
\]
For simplicity, assume that $n$ is an even integer. Then
\begin{equation}\label{eq:psi-log}
\psi\left(\frac{n}{2}+\beta+1\right)-\psi(\beta+1) = -\sum_{k=1}^{n/2}\frac{1}{\beta+k},
\end{equation}
and since
\[
\sum_{k=1}^{n/2}\frac{1}{\beta+k}\gr \int_1^{\frac{n}{2}+1}\frac{1}{\beta+x}\,dx = \log\left(\frac{n}{2}+\beta+1\right)-\log(\beta+1),
\]
it follows that $\E \uptau(\mu_{n,\beta})\ls c\psi'(\beta+1)\log^{-2}n$ for some absolute constant $c>0$, if $n$ is big enough. The estimate for the order of growth of $\E\Lambda^\ast_{\mu_{n,\beta}}$ follows again immediately by Lemma \ref{lem:beta-calc}, since $\E\Lambda^\ast\sim\E\omega$. The upper bound on $\varrho$ is a consequence of the latter, since the maximum in the statement of Theorem \ref{thm:rho2-upper} is then attained by the $\sqrt{\upbeta(\mu_{n,\beta})/\delta}$ term, combined with Theorem \ref{thm:beta-tau-comp} and our upper estimate of $\E\uptau(\mu_{n,\beta})$.

It remains to prove Lemma \ref{lem:beta-calc}.
\begin{proof}[Proof of Lemma \ref{lem:beta-calc}]
Since
\[
B(x,y)=\int_0^1 s^{x-1}(1-s)^{y-1}\,ds=\frac{\Gamma(x)\Gamma(y)}{\Gamma(x+y)},
\]
we can see that for any $k,l=0,1,2,\ldots$,
\begin{equation}\label{eq:beta-deriv}
\int_0^1 s^{x-1}(1-s)^{y-1}\log^k(s)\log^l(1-s)\,ds = \frac{\partial^k}{\partial x^k}\frac{\partial^l}{\partial y^l}B(x,y).
\end{equation}
This lets us compute the integrals involved in $\E\omega_{\mu_{n,\beta}}$ and $\E\omega^2_{\mu_{n,\beta}}$ in terms of the derivatives of the Beta function. We start with
\[
\E\omega_{\mu_{n,\beta}} = b_{n,\beta}\int_0^1 s^{\frac{n}{2}-1}(1-s)^\beta \log(1/q_{\mu_{n,\beta}}(\sqrt{s}))\,ds,
\]
where here and for the rest of the proof we denote $b_{n,\beta}=(B(n/2,\beta+1))^{-1}$. Recall that
\[
-\log q(\sqrt{s}) = -\left(\beta+\frac{n}{2}+1\right)\log(1-s)-\log h(\sqrt{s},n,\beta),
\]
where $h$ satisfies \eqref{eq:h-bounds}. We compute
\begin{equation}\label{eq:int-log(1-s)}
\int_0^1 s^{\frac{n}{2}-1}(1-s)^\beta \log(1-s)\,ds = \frac{\partial B(n/2,y)}{\partial y}\Big|_{y=\beta+1} = b_{n,\beta}^{-1}(\psi(\beta+1)-\psi(n/2+\beta+1)),
\end{equation}
and
\begin{equation}\label{eq:int-logs}
\int_0^1 s^{\frac{n}{2}-1}(1-s)^\beta \log(s)\,ds = \frac{\partial B(x,\beta+1)}{\partial x}\Big|_{x=\frac{n}{2}} = b_{n,\beta}^{-1}(\psi(n/2)-\psi(n/2+\beta+1)).
\end{equation}
By the latter and \eqref{eq:h-bounds}, we can check that the term $b_{n,\beta}\int_0^1 s^{\frac{n}{2}-1}(1-s)^\beta(-\log h(\sqrt{s},n,\beta))\,ds$ is lower bounded by $\log(2\sqrt{\pi}\sqrt{\beta+n/2})+\frac{1}{2}(\psi(n/2)-\psi(n/2+\beta+1))$ and upper bounded by $\log(2\sqrt{\pi}\sqrt{n/2+\beta+1})$. The assumption $\beta\ls c_1n$ grants us that $\psi(n/2)-\psi(n/2+\beta+1)\to 0$ with $n$, and putting things together we eventually get to
\[
\E\omega_{\mu_{n,\beta}} =\left(\beta+\frac{n}{2}+1\right)\left(\psi\left(\frac{n}{2}+\beta+1\right)-\psi(\beta+1)\right)+O(\log(\beta+n/2)).
\]
For the second moment, we will have two consider integration of the terms $\log^2(1-s)$, $\log(1-s)\log h(\sqrt{s},n,\beta)$ and $\log^2h(\sqrt{s},n,\beta)$ against the Beta density. We first use the $(k=0,l=2)$-case of \eqref{eq:beta-deriv} to arrive at
\[
b_{n,\beta}\int_0^1 s^{\frac{n}{2}-1}(1-s)^\beta\log^2(1-s)\,ds = (\psi(\beta+1)-\psi(n/2+\beta+1))^2+\psi'(\beta+1)-\psi'(n/2+\beta+1).
\]
For the second term we combine the upper bound $h(\sqrt{s},n,\beta)^{-1}\ls 2\sqrt{\pi}\sqrt{\beta+n/2}$ with the computation \eqref{eq:int-log(1-s)} to get
\[
b_{n,\beta}\int_0^1 s^{\frac{n}{2}-1}(1-s)^\beta\log(1-s)\log h\,ds \ls \log(2\sqrt{\pi}\sqrt{\beta+n/2})(\psi(n/2+\beta+1)-\psi(\beta+1)).
\]
Finally, to upper bound the $\log^2h$ term we check with the aid of \eqref{eq:h-bounds} that
\[
\log^2h(\sqrt{s},n,\beta)\ls \log^2(2\sqrt{\pi}\sqrt{n/2+\beta+1})+\log^2(2\sqrt{\pi s}\sqrt{n/2+\beta}),
\]
and using the $(k=2,l=0)$-case of \eqref{eq:beta-deriv} we can verify that
\begin{align*}
&b_{n,\beta}\int_0^1 s^{\frac{n}{2}-1}(1-s)^\beta\log^2h\,ds \ls\\ &\hspace{3cm}\ls\frac{1}{4}\left((\psi(n/2)-\psi(n/2+\beta+1))^2+\psi'(n/2)-\psi'(n/2+\beta+1)\right)+c\log n.
\end{align*}
Putting everything together, and checking that the estimates for the last two terms are at most $O(\log^2(\beta+n/2))$ (recall \eqref{eq:psi-log}), we eventually get that indeed
\[
\E\omega_{\mu_{n,\beta}}^2=\left(\beta+\frac{n+1}{2}\right)^2\left(\left(\psi\left(\frac{n}{2}+\beta+1\right)-\psi(\beta+1)\right)^2 +\psi'(\beta+1)-\psi'\left(\frac{n}{2}+\beta+1\right)\right),
\]
modulo a $O((\beta+n/2)\log^2(\beta+n/2))$-term.
\end{proof}

\textbf{Acknowledgement.} We would like to thank the anonymous referee for a number of comments that helped us improve the exposition. The research project is implemented in the framework of H.F.R.I call “Basic research Financing (Horizontal support of all Sciences)” under the National Recovery and Resilience Plan “Greece 2.0” funded by the European Union –NextGenerationEU(H.F.R.I. Project Number:15445).

\bibliographystyle{abbrv}

\bibliography{biblio}

\end{document}